\declaretheoremstyle[headfont=\normalsize\normalfont\bfseries,notefont=\mdseries,
notebraces={(}{)},bodyfont=\normalfont\itshape,postheadspace=0.5em]{italstyle}
\declaretheorem[style=italstyle,name=Theorem]{theorem}
\declaretheorem[style=italstyle,name=Lemma,sibling=theorem]{lemma}
\declaretheorem[style=italstyle,name=Definition,sibling=theorem]{definition}
\declaretheorem[style=italstyle,name=Proposition,sibling=theorem]{proposition}
\declaretheoremstyle[headfont=\normalsize\normalfont\bfseries,notefont=\mdseries,
notebraces={(}{)},bodyfont=\normalfont,postheadspace=0.5em]{normalstyle}
\declaretheorem[style=normalstyle,name=Remark,sibling=theorem]{remark}
\newcommand{\abs}[1]{\left|#1\right|}
\newcommand{\bd}{\partial}
\newcommand{\C}{\mathbb{C}}
\renewcommand{\d}{\mathrm{d}}
\newcommand{\id}{\mathrm{id}}
\newcommand{\ip}[1]{\left\langle#1\right\rangle}
\newcommand{\pd}[2]{\frac{\partial #1}{\partial #2}}
\newcommand{\R}{\mathbb{R}}
\newcommand{\set}[1]{\left\{#1\right\}}
\newcommand{\Z}{\mathbb{Z}}
\renewcommand\section{\@startsection{section}{1}{0pt}{-3.5ex \@plus -1ex \@minus -.2ex}{2.3ex \@plus.2ex}{\centering\itshape}}
\renewcommand{\subsection}{\@startsection{subsection}{2}\z@{.5\linespacing\@plus.7\linespacing}{-.5em}{\normalfont\itshape}}
\author{Dylan Cant}
\email{dylan@dylancant.ca}
\address{Départment de mathématiques et de statistique, Université de Montreal, Pavillon André-Aisenstadt, 2920 Chemin de la Tour, Montreal, Quebec, H3T 1J4, Canada}
\author{Julio Sampietro Christ}
\email{julio.sampietro-christ@universite-paris-saclay.fr}
\address{Institut de mathématique d'Orsay, Université Paris-Saclay, Bâtiment 307, rue Michel Magat, F-91405 Orsay Cedex, France}
\date{\today}
\begin{document}
\title{Equivariant Lagrangian displacements}
\begin{abstract}
  This paper proves that certain monotone Lagrangians in the standard symplectic vector space cannot be displaced by a Hamiltonian isotopy which commutes with the antipodal map. The method of proof is to develop a Borel equivariant version of the quantum cohomology of Biran and Cornea, and prove it is sensitive to equivariant displacements. The Floer--Euler class of Biran and Khanevsky appears as a term in the equivariant differential in certain cases.
\end{abstract}
\maketitle

\section{Introduction}
\label{sec:introduction}

Suppose that $L$ is a Lagrangian submanifold of a symplectic manifold $W$ admitting a symplectic involution $a$, and suppose $L$ is preserved by the involution as a set. The question we are interested in is the following:

\emph{Does there exist an equivariant Hamiltonian isotopy $\varphi_{t}$ which displaces $L$?}

Here \emph{equivariant} is in the sense $a\varphi_{t}=\varphi_{t}a$ holds for all $t$. If the answer is ``yes'' then we say $L$ is \emph{equivariantly displaceable}.

The prototypical example is $W=\C^{n}$, $a(z)=-z$; here one can consider Lagrangians $L$ of the following form: if $\pi:S^{2n-1}\to \mathbb{C}P^{n-1}$ is the standard Hopf fibration, then $L=\pi^{-1}(\bar{L})$ is invariant under $a$ for any Lagrangian submanifold $\bar{L}\subset \mathbb{C}P^{n-1}$. As an application of our methods, we will prove:
\begin{theorem}\label{theorem:application}
  The Lagrangian $L=\pi^{-1}(\bar{L})$ in $\C^{n}$ is not equivariantly displaceable, with respect to $a(z)=-z$, provided both:
  \begin{enumerate}
  \item $\bar{L}$ is monotone with minimal Maslov number at least $2$,
  \item the quantum cohomology of $\bar{L}$ with coefficient field $\Z/2\Z$, in the sense of \cite{biran-cornea-arXiv-2007,biran-cornea-GT-2009}, is non-vanishing.
  \end{enumerate}
\end{theorem}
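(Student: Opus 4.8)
The plan is to feed $L=\pi^{-1}(\bar L)$ into the Borel equivariant quantum cohomology $QH^{*}_{\Z/2}(L)$ developed in the body of the paper and to deduce the theorem from the non-vanishing of that invariant, using the displacement-sensitivity established earlier. First I would record the geometric preliminaries. The set $L$ is a closed submanifold of the unit sphere $S^{2n-1}\subset\C^{n}$; it is Lagrangian because $\bar L$ is; it is preserved by $a(z)=-z$ because $z$ and $-z$ lie on a common Hopf fibre (the antipodal map rotates each fibre circle by $\pi$); and $a$ acts on $L$ \emph{freely}, since $a$ has no fixed point on $S^{2n-1}$. Writing $\pi\colon L\to\bar L$ for the induced circle bundle and combining the exact sequence $\pi_{1}(S^{1})\to\pi_{1}(L)\to\pi_{1}(\bar L)\to1$ with the Hopf fibre disc (symplectic area $\pi$, Maslov index $2n$), one checks that hypothesis (1) forces $L$ to be monotone in $\C^{n}$ with minimal Maslov number at least $2$; in particular $L$ is an admissible input for $QH^{*}_{\Z/2}(L)$, and the displacement result applies: if $L$ were displaced by a Hamiltonian isotopy commuting with $a$, then $QH^{*}_{\Z/2}(L)=0$. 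It therefore suffices to prove $QH^{*}_{\Z/2}(L)\neq0$.

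It is worth stressing at the outset that this cannot come from ordinary Floer theory: any bounded subset of $\C^{n}$ is displaced by a suitable translation, which is Hamiltonian, so $L$ is (non-equivariantly) displaceable and $QH^{*}(L)=0$ --- but translations do not commute with $a$, and that discrepancy is precisely the content of the theorem. Consequently the non-vanishing of $QH^{*}_{\Z/2}(L)$ must be produced entirely by the $u$-divisible part of the equivariant differential; and since the underlying ordinary Floer complex is acyclic, it is essential here that the invariant in play is the genuinely Borel (direct-limit, $\Z/2[u]$-coefficient) version rather than its completion over $\Z/2[[u]]$, the difference being a $\varprojlim^{1}$-type phenomenon concentrated ``at $u=\infty$''.

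The heart of the matter is therefore the computation of $QH^{*}_{\Z/2}(L)$, and this is where I expect the real work to lie. The decisive feature is that $a$ acts on $L$ fibrewise over $\bar L$: the Borel construction of $L$ is homotopy equivalent to the quotient circle bundle $L/(\Z/2)\to\bar L$, and on the Floer chain level I would set up a reduction/degeneration argument --- in the spirit of Biran and Khanevsky's Floer--Gysin exact sequence for the pair $(L,\bar L)$ --- identifying the equivariant Floer complex of $L\subset\C^{n}$ with a complex manufactured from the Floer data of $\bar L\subset\C P^{n-1}$, in which the Floer--Euler class of Biran and Khanevsky enters the equivariant differential (this is the case of their class anticipated in the abstract). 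The non-vanishing of $QH^{*}_{\Z/2}(L)$ then reduces to a homological-algebra computation with this complex, which I expect to succeed precisely because $QH^{*}(\bar L)\neq0$ by hypothesis (2); Theorem~\ref{theorem:application} follows. The main obstacle is exactly this identification and computation: one must pin the equivariant differential down sharply enough to see that the Floer--Euler-class contribution is the only correction relevant to non-vanishing and that no further quantum cancellation kills the surviving class --- a delicate point, because in the \emph{ordinary} Floer--Gysin sequence the vanishing of $QH^{*}(L)$ already forces the Floer--Euler class to act invertibly on $QH^{*}(\bar L)$, so everything hinges on how the insertion of the equivariant parameter $u$ breaks that invertibility. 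I would control this by comparison with the ordinary Floer--Gysin sequence and by using the module structure of $QH^{*}(\bar L)$ over the ambient quantum cohomology $QH^{*}(\C P^{n-1})$.
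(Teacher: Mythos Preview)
Your proposal is correct and follows essentially the same route as the paper: verify $L$ is an admissible monotone Lagrangian, invoke the displacement-sensitivity of $\mathrm{QH}^{*}_{eq}(L)$, and then compute $\mathrm{QH}^{*}_{eq}(L)$ via a Biran--Khanevsky-type reduction to $\bar L$, with the Floer--Euler class $F$ entering the equivariant differential and the uncompleted $\mathbf{k}[e]$-coefficients being crucial. The paper makes this precise by showing the equivariant differential is $\begin{pmatrix} d & e^{2}+F \\ 0 & d \end{pmatrix}$ and deducing $\mathrm{QH}^{*}_{eq}(L)\simeq \mathrm{QH}^{*}(\bar L)\oplus \mathrm{QH}^{*-1}(\bar L)$ by direct homological algebra; the module structure over $\mathrm{QH}^{*}(\mathbb{C}P^{n-1})$ you mention is not needed.
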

\begin{remark}
  In the case when $n=1$, the Lagrangian $L$ is simply the unit circle, and an elementary argument can be used to prove Theorem \ref{theorem:application}. We give such an argument in \S\ref{sec:elem-argum-prov}, although we are confident readers can furnish a proof without much difficulty.
\end{remark}
\begin{remark}
  It is known that the quantum cohomology of $\bar{L}$ is non-vanishing in various cases, e.g., if $\bar{L}$ is the real projective space or the Clifford torus (in this case $L$ is just the product of $n$ circles); see \cite{biran-cornea-arXiv-2007,biran-cornea-GT-2009} and \cite{cho-imrn-2004}.
\end{remark}

\subsection{Quantum cohomology for monotone Lagrangians}
\label{sec:quant-cohom}

Throughout we work in the context of a single monotone Lagrangian $L\subset W$, with minimal Maslov number $N\ge 2$; here $W$ should be tame at infinity as in \cite{gromov-inventiones-1985}.

Recall that the \emph{quantum cohomology} for a monotone Lagrangian $L$ (with minimal Maslov number at least $2$) is a deformation of the Morse cohomology of $L$. See \cite{biran-cornea-GT-2009} for an overview of the construction. To define the quantum cohomology, one requires:
\begin{itemize}
\item a Morse-Smale pseudogradient $P$ on $L$,
\item an $\omega$-tame almost complex structure $J$ on $W$.
\end{itemize}
One defines a differential on the vector space:
\begin{equation*}
  \mathrm{CM}(P)\otimes \mathbf{k}[q^{-1},q]]
\end{equation*}
where $\mathrm{CM}(P)$ is the free $\mathbf{k}$-vector space generated by the zeros of $P$ (graded by the Morse index), and $q$ is a formal variable of degree $N$. Throughout this paper, we work with the coefficient field $\mathbf{k}=\Z/2\Z$.

The differential is a polynomial expression:
\begin{equation*}
  d=d_{0}+qd_{1}+q^{2}d_{2}+\cdots,
\end{equation*}
where $d_{i}:\mathrm{CM}(P)\to \mathrm{CM}(P)$ is a linear endomorphism of degree $1-Ni$. Moreover, $d_{0}$ is the classical Morse differential associated to the pseudogradient $P$. The quantum terms $d_{i}$, $i>0$, are defined by counting \emph{pearl trajectories}, and are described in detail in \S\ref{sec:recoll-lagr-quant}. As we will explain later, these terms depend on a choice of almost complex structure $J$.

As established by Biran and Cornea, provided $J$ is generic, the total differential $d=d_{0}+qd_{1}+\cdots$ squares to zero; the resulting homology vector space is denoted by $\mathrm{QH}^{*}(L;P,J)$. The significance of this invariant is:
\begin{proposition}
  If $\mathrm{QH}^{*}(L;P,J)\ne 0$, then $L$ cannot be displaced by a Hamiltonian isotopy.\hfill$\square$
\end{proposition}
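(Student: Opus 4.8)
We prove the contrapositive: if $L$ admits a Hamiltonian isotopy $\varphi_{t}$ with $\varphi_{1}(L)\cap L=\emptyset$, then $\mathrm{QH}^{*}(L;P,J)=0$. The plan is to deform the pearl complex by a Hamiltonian perturbation and let displaceability force vanishing. First I would enlarge the Biran--Cornea construction to allow a time-dependent Hamiltonian $H:[0,1]\times W\to\R$: instead of counting pearl trajectories with boundary on the fixed $L$, one counts configurations of negative pseudogradient flow segments interrupted by $J$-holomorphic disks with boundary on the moving Lagrangian $\varphi_{t}(L)$ --- equivalently, after the usual graph trick, $J$-holomorphic disks in $W\times W$ with boundary on $L\times L$ and a chord-type asymptotic condition. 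For $H=0$ this is exactly the pearl complex computing $\mathrm{QH}^{*}(L;P,J)$, and for generic $(P,J,H)$ the monotonicity hypothesis together with $N\ge 2$ rules out disk and sphere bubbling just as in the unperturbed case, so one obtains a well-defined homology $\mathrm{QH}^{*}(L;P,J,H)$.

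Second, I would establish Hamiltonian invariance: a path $H_{s}$ of admissible Hamiltonians induces a chain homotopy equivalence between the associated complexes, via the standard count of $s$-dependent continuation pearl trajectories, the requisite a priori energy bounds coming again from monotonicity and Gromov compactness with Lagrangian boundary. Hence $\mathrm{QH}^{*}(L;P,J,H)\cong\mathrm{QH}^{*}(L;P,J)$ for every admissible $H$.

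Third, I would take $H$ to generate (a small generic perturbation of) $\varphi_{t}$. Since $\varphi_{1}(L)\cap L=\emptyset$, the perturbed pearl complex has no generators at all --- they are in bijection with $\varphi_{1}(L)\cap L$ --- so $\mathrm{QH}^{*}(L;P,J,H)=0$, and the invariance of the previous step then forces $\mathrm{QH}^{*}(L;P,J)=0$, which is the desired conclusion.

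The main obstacle is the foundational bookkeeping: arranging that the perturbed pearl complex reduces honestly to the Biran--Cornea one at $H=0$, and checking that transversality, Gromov compactness, and the exclusion of disk/sphere bubbling hold uniformly in $H$ and in the continuation parameter $s$. All of this is a routine adaptation of \cite{biran-cornea-GT-2009}, so I would keep the discussion brief; an essentially equivalent shortcut is to invoke the known isomorphism $\mathrm{QH}^{*}(L;P,J)\cong\mathrm{HF}^{*}(L,L)$ between the pearl homology and the self-Floer homology of $L$, together with the Hamiltonian invariance of Lagrangian Floer homology and its obvious vanishing for disjoint Lagrangians, which packages the same analytic content into a citation.
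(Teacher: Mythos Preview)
The paper gives no proof of this proposition: it is marked with a $\square$ and treated as a known fact from \cite{biran-cornea-GT-2009}. Your final ``shortcut'' via the PSS isomorphism $\mathrm{QH}^{*}(L;P,J)\cong\mathrm{HF}^{*}(L,L)$ and the standard displacement-vanishing of Lagrangian Floer cohomology is correct and is essentially what the paper is citing.

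Your main three-step argument, however, has a real gap at step three. The generators of the pearl complex are zeros of the pseudogradient $P$ on $L$, and this does not change when you perturb the \emph{pearls} by a Hamiltonian: the asymptotic conditions of a pearl trajectory remain Morse-theoretic (points flowing into critical points of $P$), not Floer-theoretic. So the assertion that ``the perturbed pearl complex has no generators at all --- they are in bijection with $\varphi_{1}(L)\cap L$'' is not justified by what you set up in step one. To pass from critical-point generators to chord/intersection generators \emph{is} the PSS map; your ``main'' argument therefore secretly relies on the ``shortcut'' you deferred to the end, and as written step one (reduces to the pearl complex at $H=0$) and step three (generators are intersection points) cannot both be true of the same complex.

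For comparison, the paper's proof of the equivariant analogue in \S\ref{sec:proof-theorem-displacement} (which specializes to the non-equivariant statement) avoids this entirely by never changing the generators. One keeps the pearl complex fixed and inserts into each trajectory a single \emph{deformation pearl} solving Floer's continuation equation $\partial_{s}u+J(\partial_{t}u-X_{s,t})=0$ for a cut-off $X_{s,t}=\beta(s_{1}-s)\beta(s+s_{1})X_{t}$ of the displacing vector field. Counting such trajectories defines a chain endomorphism $\mathfrak{h}$; homotoping $H$ to zero shows $\mathfrak{h}=\id$ on homology, while for displacing $H$ and $s_{1}$ large the deformation pearl cannot exist (its energy on $[1-s_{1},s_{1}-1]$ is uniformly bounded above, but every length-one Floer strip for a displacing isotopy has a fixed positive energy), so $\mathfrak{h}=0$ on chains. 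Thus $\id=0$ on $\mathrm{QH}^{*}(L)$, with no appeal to intersection-point generators or the PSS isomorphism.
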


The quantum cohomology is known to be isomorphic to the Lagrangian Floer cohomology of $L$ with itself; see \cite[Theorem A(vi)]{biran-cornea-GT-2009}. As in \cite{biran-cornea-GT-2009}, in this paper we do not make any use of the fact that the Lagrangian quantum homology can be identified with the Floer homology.

\subsection{Equivariant quantum cohomology}
\label{sec:equiv-quant-cohom}

Suppose now that $a:W\to W$ is a symplectic involution and $L$ is preserved by $a$. Our approach to $\Z/2\Z$-equivariant quantum cohomology is a ``pearl trajectory'' version of the equivariant cohomology of \cite{seidel-smith-GAFA-2010,seidel-GAFA-2015}, which is itself a Morse/Floer version of the Borel equivariant cohomology. As in the ordinary quantum cohomology, the definition depends on some auxiliary choices. As part of the data, one picks $P,J$ exactly as in \S\ref{sec:quant-cohom}. The underlying equivariant complex is:
\begin{equation*}
  \mathrm{CM}(P)\otimes \mathbf{k}[q^{-1},q]]\otimes \mathbf{k}[e],
\end{equation*}
and the differential $d_{eq}$ is a polynomial expression:
\begin{equation*}
  d_{eq}:=\sum q^{i}e^{j}d_{i,j},
\end{equation*}
where $d_{i,j}:\mathrm{CM}(P)\to \mathrm{CM}(P)$ is a graded endomorphism of degree $1-Ni-j$.

The definition will be such that the specialization $e=0$ is the ordinary quantum cohomology differential; the terms appearing with coefficient $e^{j}$ form a graded endomorphism of $\mathrm{CM}(P)\otimes \mathbf{k}[q^{-1},q]]$ of degree $1-j$.

The terms $d_{i,j}$ are constructed by following the recipe of \cite{seidel-smith-GAFA-2010,seidel-GAFA-2015}. For now, we will just say that $d_{i,j}$ depends on the choice of sufficiently regular data $(P_{\eta},J_{\eta})$ parametrized by $\eta\in S^{\infty}$. These data are required to restrict to $(P,J)$ at various points on the sphere. All of this is explained properly in \S\ref{sec:equiv-diff}.
For such regular data $(P_{\eta},J_{\eta})$, we define $\mathrm{QH}^{*}_{eq}(L;P_{\eta},J_{\eta})$ to be the homology of this complex.

\begin{remark}
  It is important to note that $\mathrm{QH}^{*}_{eq}$ is an \emph{uncompleted} equivariant cohomology, in the sense that it is a module over $\mathbf{k}[e]$ rather than over the ring $\mathbf{k}[[e]]=H^{*}(\mathrm{RP}^{\infty})$. This algebraic curiosity plays an important role in our proof of Theorem \ref{theorem:application}.
\end{remark}

\begin{remark}[Invariance statement]\label{remark:invariance-statement}
  Well-established continuation arguments then prove the map sending regular Borel data $(P_{\eta},J_{\eta})$ to $\mathrm{QH}_{eq}^{*}(L;P_{\eta},J_{\eta})$ can be equipped with the structure of a \emph{connected simple system}\footnote{A \emph{connected simple system} is a functor from an indiscrete groupoid.} \cite{salamon-trans-ams-1985}. In other words, if one considers the set of regular data as an indiscrete groupoid (a category with a unique morphism between any two elements) then:
  \begin{equation*}
    (P_{\eta},J_{\eta})\mapsto \mathrm{QH}_{eq}^{*}(L;P_{\eta},J_{\eta})
  \end{equation*}
  extends to a functor; we explain the definition of the morphisms and prove this invariance in \S\ref{sec:cont-isom}. The limit\footnote{In the category theory sense, see \cite{maclean-cat-working-math}} of this connected simple system is denoted $\mathrm{QH}^{*}_{eq}(L)$, and is the key player in our paper.
\end{remark}

\subsection{Equivariant displacements}
\label{sec:equiv-displ}

Equivariant quantum cohomology is sensitive to equivariant displacements:
\begin{theorem}\label{theorem:displacement}
  If $\psi_{t}$ is an equivariant Hamiltonian isotopy displacing $L$ then the equivariant quantum cohomology vanishes $\mathrm{QH}^{*}_{eq}(L)=0$.
\end{theorem}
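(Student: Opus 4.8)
The plan is to run the standard Floer-theoretic displacement argument, but carried out $S^\infty$-parametrically so that it respects the Borel construction. First I would recall that for ordinary quantum cohomology the vanishing under a Hamiltonian displacement $\psi_1(L)\cap L=\emptyset$ comes from considering the PSS-type / continuation map from $\mathrm{QH}^*(L;P,J)$ to the Floer complex $\mathrm{CF}^*(L,\psi_1(L))$, which is acyclic because the two Lagrangians are disjoint, and from the fact that this continuation map is an isomorphism onto a homology that computes $\mathrm{QH}^*(L)$ again after deforming back. In the monotone pearl-complex language of Biran--Cornea one can phrase this more economically: the chain-level continuation map induced by the isotopy $\psi_t$ is a homotopy equivalence from the complex defined with data $(P,J)$ to the complex defined with the pushed-forward data $(\psi_{1*}P,\psi_1^*J)$; but since $\psi_t$ is Hamiltonian, Gromov compactness and the disjointness $\psi_1(L)\cap L=\emptyset$ force the relevant moduli spaces of ``Hamiltonian pearls'' to be empty in high enough energy, yielding acyclicity. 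The equivariant version replaces every moduli space by its $S^\infty$-parametrized analogue as in \cite{seidel-smith-GAFA-2010,seidel-GAFA-2015}: one chooses, over $\eta\in S^\infty$, a family of isotopies $\psi^\eta_t$ restricting to $\psi_t$ at the relevant points, and such that each $\psi^\eta_1$ still displaces $L$ (this is automatic since $a\psi_t=\psi_t a$ means the whole family can be taken equal to $\psi_t$, or a small perturbation thereof, and displacement is an open condition).

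The key steps, in order, are: (1) Fix equivariant regular Borel data $(P_\eta,J_\eta)$ computing $\mathrm{QH}^*_{eq}(L;P_\eta,J_\eta)$. (2) Using the equivariant isotopy $\psi_t$, build a one-parameter family of Borel data interpolating between $(P_\eta,J_\eta)$ and its pushforward under $\psi_1$; because $\psi_t$ commutes with $a$, this interpolation can be chosen to remain admissible Borel data for all times, so the continuation principle of Remark~\ref{remark:invariance-statement} applies and gives an isomorphism in $\mathrm{QH}^*_{eq}$. (3) Observe that the pushed-forward data can equivalently be viewed as data for the pair $(L,\psi_1(L))$ via the graph/Lagrangian suspension trick, and set up the equivariant pearl complex for that pair. (4) Invoke Gromov compactness together with monotonicity: there is an a priori energy bound on any pearl or parametrized pearl trajectory contributing to a fixed-degree matrix coefficient, and for energies below that bound the disjointness $\psi_1(L)\cap L=\emptyset$ makes the moduli spaces empty, so the complex is acyclic; since $\mathbf{k}[e]$ is added only as a polynomial tensor factor and the $S^\infty$-parametrization does not change the energy bounds, acyclicity persists equivariantly. (5) Conclude $\mathrm{QH}^*_{eq}(L;P_\eta,J_\eta)=0$ for this choice of data, hence by the connected simple system structure the limit $\mathrm{QH}^*_{eq}(L)=0$.

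I would organize the write-up so that the genuinely new content is isolated in step (4): checking that the usual energy/compactness estimates survive the $S^\infty$-parametrization. Here the crucial point is Seidel--Smith's observation that although $S^\infty$ is infinite-dimensional, any given moduli space only sees a finite skeleton $S^{k}\subset S^\infty$ (because the perturbation data is pulled back from a finite stage, or stabilizes), so standard finite-dimensional Gromov compactness applies uniformly; combined with monotonicity of $L$ this gives the degreewise-finiteness and the a priori energy bound needed to deduce that the displaced complex is acyclic.

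The main obstacle I anticipate is \textbf{not} the compactness per se but the \emph{bookkeeping of the Borel data along the isotopy}: one must produce a family of admissible $(P_\eta,J_\eta)$-type data that simultaneously (a) interpolates to the $\psi_1$-pushforward, (b) satisfies the pinning/restriction conditions on $S^\infty$ demanded by the construction in \S\ref{sec:equiv-diff}, and (c) stays regular. Because the equivariance of $\psi_t$ is exactly what makes the pushforward of admissible Borel data again admissible, this is morally straightforward, but making it precise — in particular checking that the continuation/connected-simple-system machinery of \S\ref{sec:cont-isom} is broad enough to cover ``continuation maps induced by ambient equivariant Hamiltonian isotopies,'' not merely changes of $(P_\eta,J_\eta)$ with fixed ambient geometry — is where the care is needed. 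Once that is in place, the vanishing follows formally, and passing to the limit of the connected simple system gives $\mathrm{QH}^*_{eq}(L)=0$ as claimed.
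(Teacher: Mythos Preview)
Your strategic outline is morally aligned with the paper's, but the step you flag as the ``main obstacle'' is in fact the entire content of the proof, and you have not supplied it. The continuation machinery of \S\ref{sec:cont-isom} handles only changes of Borel data $(P_\eta,J_\eta)$ with all pearls $J$-holomorphic; it does not incorporate an ambient Hamiltonian isotopy, and your step~(2) --- pushing forward $(P_\eta,J_\eta)$ by $\psi_1$ --- produces data on $\psi_1(L)$, not on $L$, so it does not fit that framework. Your step~(3), passing to a pearl complex for the pair $(L,\psi_1(L))$, would require building equivariant PSS-type maps and proving they are isomorphisms, which is at least as much work as what the paper actually does. The paper bypasses all of this: it defines a single \emph{deformation map} $\mathfrak{h}$ by allowing exactly one pearl in each trajectory to solve a Floer continuation equation $\partial_su+J(\partial_tu-X_{\eta,s,t}(u))=0$ rather than the holomorphic equation, then shows directly that $\mathfrak{h}=\id$ on homology (homotope $H\to 0$) and $\mathfrak{h}=0$ on the chain level (stretch the support of $H$ in the $s$-direction until no deformation pearl can exist, using that Floer strips for a displacing isotopy have energy bounded below). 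No intermediate complex or two-Lagrangian theory is needed.

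There is also a specific finiteness issue you gloss over. Once a Hamiltonian perturbation is introduced, the perturbed pearl can have negative Maslov number, so the virtual-dimension-zero constraint no longer gives an a priori upper bound on $\dim\mathscr{P}$, i.e., on the power of $e$ appearing in $\mathfrak{h}(x_-)$. The paper's solution (Lemma in \S\ref{sec:deform-cont-map}) is to impose a uniform curvature bound (H4) on the family $H_{\eta,s,t}$, which via monotonicity bounds $\mu$ of the perturbed pearl from below and hence $\dim\mathscr{P}$ from above; this is exactly where working over the \emph{uncompleted} ring $\mathbf{k}[e]$ matters and where monotonicity is used ``in a seemingly crucial way'' (cf.\ \S\ref{sec:further-questions}). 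Your sentence ``since $\mathbf{k}[e]$ is added only as a polynomial tensor factor \dots acyclicity persists equivariantly'' hides precisely this point.
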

The proof of the theorem is given in \S\ref{sec:proof-theorem-displacement} and goes as follows: first one shows:
\begin{equation*}
  \id:\mathrm{QH}^{*}_{eq}(L;P_{\eta},J_{\eta})\to\mathrm{QH}^{*}_{eq}(L;P_{\eta},J_{\eta})
\end{equation*}
is represented as a certain chain level continuation map. The continuation data can be deformed using the equivariant isotopy $\psi_{t}$, and, under the hypothesis that $\psi_{1}(L)\cap L=\emptyset$, one can prove that the deformed continuation map is zero. Thus the identity map is zero, as desired.

\subsection{The Biran-Khanevsky Floer-Gysin sequence}
\label{sec:biran-khanevnsky-floer-gysin-sequence}

The paper \cite{biran-khanevsky-CMH-2013} considers the quantum cohomology of Lagrangians $L\in \bd \Omega$ where $\Omega$ is a Liouville domain and $\bd \Omega$ has a Reeb flow inducing a free $\R/\Z$-action. Throughout this section we assume $\pi_{2}(\Omega,\bd\Omega)=0$, in addition to the usual assumptions that $L$ is monotone with minimal Maslov number at least $2$.

In this setting the \emph{reduced Lagrangian} $\bar{L}=L/(\R/\Z)$ is a monotone Lagrangian in the symplectic reduction $M=\bd\Omega/(\R/\Z)$; moreover $\bar{L}$ and $L$ have the same minimal Maslov number. The paper \cite{biran-khanevsky-CMH-2013} constructs an element $F\in \mathrm{QH}^{2}(\bar{L})$ called the \emph{Floer-Euler class}, and proves an exact triangle (the \emph{Floer-Gysin sequence}):
\begin{equation*}
  \begin{tikzcd}
    \cdots\arrow[r]&{\mathrm{QH}(\bar{L})}\arrow[r,"{F}"] &{\mathrm{QH}(\bar{L})}\arrow[r,"{}"] &{\mathrm{QH}(L)}\arrow[r]&\cdots
  \end{tikzcd}
\end{equation*}
where $F$ as a map denotes the product with $F$ with respect to Biran-Cornea's quantum multiplication \cite{biran-cornea-arXiv-2007,biran-cornea-GT-2009}. An example of such domain $\Omega$ is:
\begin{equation*}
  \Omega=B(1)=\set{z\in \C^{n}:\pi\abs{z}^{2}\le 1},
\end{equation*}
and this has reduction $M=\mathbb{C}P^{n-1}$; this is relevant to Theorem \ref{theorem:application}.

\subsection{Main technical result}
\label{sec:main-techn-result}

We prove the following:
\begin{theorem}\label{theorem:main-computation}
  In the context of \S\ref{sec:biran-khanevnsky-floer-gysin-sequence}, if the $\Z/2\Z$-action on $\bd\Omega$ inherited from its $\R/\Z$-action extends to an exact symplectic involution of $\Omega$, then:
  \begin{equation}\label{eq:main-isomorphism}
    \mathrm{QH}_{eq}^{*}(L)\simeq (\mathrm{QH}^{*}(\bar{L})\otimes \mathbf{k}[e])/(e^{2}+F)\simeq \mathrm{QH}^{*}(\bar{L})\oplus \mathrm{QH}^{*-1}(\bar{L}),
  \end{equation}
  Here $(e^{2}+F)$ is the image of the morphism $e^{2}+F$, i.e., the ideal generated by $e^{2}+F$ in the commutative algebra $\mathrm{QH}^{*}(\bar{L})\otimes \mathbf{k}[e]$.
\end{theorem}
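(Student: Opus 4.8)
The plan is to set up a filtered complex computing $\mathrm{QH}^*_{eq}(L)$ and identify its $E_1$- or $E_2$-page with the quantum cohomology of $\bar L$, exploiting the fibration $L\to\bar L$ coming from the free Reeb $\R/\Z$-action. First I would fix a $\Z/2\Z$-equivariant choice of Morse-Smale pseudogradient $P$ and $\omega$-tame almost complex structure $J$ on $L\subset\bd\Omega\subset\Omega$; by hypothesis the $\Z/2\Z$-action extends to an exact symplectic involution $a$ of $\Omega$, and the Reeb flow identifies the antipodal map on fibers of $L\to\bar L$ with $a|_L$. I would choose $P$ to be the lift of a pseudogradient $\bar P$ on $\bar L$ together with a Morse function on the $\R/\Z$-fiber, so that $\mathrm{CM}(P)$ is a module over $\mathrm{CM}(\bar P)$ with an extra generator in each fiber degree $0$ and $1$ — this is exactly the algebraic setup behind the Biran-Khanevsky Floer-Gysin sequence. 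The equivariant complex is then $\mathrm{CM}(P)\otimes\mathbf k[q^{-1},q]]\otimes\mathbf k[e]$ with differential $d_{eq}=\sum q^ie^jd_{i,j}$, and I would filter by the $e$-degree (or, dually, by the fiber Morse index) so that the associated graded differential is the $e=0$ differential, namely the ordinary quantum differential on $L$.

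Next I would run the spectral sequence. The $e=0$ page is $\mathrm{QH}^*(L)$, which by the Floer-Gysin exact triangle sits in an extension involving two copies of $\mathrm{QH}^*(\bar L)$ related by multiplication by $F$. The key computation is to identify the first nonzero higher differential $d_{0,1}$ (the coefficient of $e$) with the Floer-Euler class, or rather to show that after one more turn of the spectral sequence the $e^2$-term must act by $F$: this is the assertion in the introduction that ``the Floer-Euler class of Biran and Khanevsky appears as a term in the equivariant differential.'' Concretely I would argue that the parametrized pearl trajectories over $S^\infty/(\Z/2\Z)=\mathrm{RP}^\infty$ that contribute to $d_{i,j}$ decompose, via the $\R/\Z$-fibration, into a base pearl trajectory in $\bar L$ together with a count of how the fiber coordinate and the Borel parameter interact; the lowest-order such count with $e^2$ reproduces precisely the moduli defining $F\in\mathrm{QH}^2(\bar L)$. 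Matching these two counts — Biran-Khanevsky's definition of $F$ against the $e^2d_{0,0}$-type term in $d_{eq}$ — is the technical heart and the main obstacle, since it requires a careful gluing/compactness analysis of the relevant one-dimensional moduli spaces and a check that no other contributions survive in the relevant degree.

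Granting that identification, the $E_\infty$-page is the cohomology of the two-step complex $\mathrm{QH}^*(\bar L)\otimes\mathbf k[e]$ with differential given by multiplication by $e^2+F$; since $e^2+F$ is a non-zero-divisor in the polynomial ring $\mathrm{QH}^*(\bar L)\otimes\mathbf k[e]$ (it is monic in $e$), this cohomology is concentrated in one degree and equals the quotient ring $(\mathrm{QH}^*(\bar L)\otimes\mathbf k[e])/(e^2+F)$. As a $\mathbf k$-module this quotient is free of rank two over $\mathrm{QH}^*(\bar L)$, with basis $1,e$, giving the second isomorphism $\mathrm{QH}^*(\bar L)\oplus\mathrm{QH}^{*-1}(\bar L)$ after accounting for the degree $1$ of $e$. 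Finally I would check that the spectral sequence degenerates at this page — there is no room for further differentials because the complex is already two-term in the $e$-filtration — and that the resulting identification is independent of the auxiliary choices by appealing to the connected-simple-system invariance from Remark~\ref{remark:invariance-statement}, so that the isomorphism descends to $\mathrm{QH}^*_{eq}(L)=\varprojlim\mathrm{QH}^*_{eq}(L;P_\eta,J_\eta)$ as stated.
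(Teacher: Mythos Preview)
Your overall architecture is close to the paper's, but there are two concrete gaps.

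First, the filtration by $e$-degree does not do what you want. You write that the associated graded gives the ordinary quantum differential on $L$, so the first page is $\mathrm{QH}^*(L)\otimes\mathbf{k}[e]$. But in the setting of the theorem one often has $\mathrm{QH}^*(L)=0$ (e.g.\ for $L\subset\C^n$), so your spectral sequence collapses to zero and cannot recover the nontrivial answer. The $e$-filtration and the fiber-Morse-index filtration are \emph{not} dual or equivalent here. The paper instead computes the equivariant differential explicitly as a $2\times 2$ upper-triangular matrix with respect to the splitting $\mathrm{CM}(P)\simeq\mathrm{CM}(Q)\oplus\mathrm{CM}(Q)[-1]$ coming from the two fibre critical points $x(z),y(z)$, obtaining
\[
d_{eq}=\begin{bmatrix} d & e^2+F\\ 0 & d\end{bmatrix},
\]
and then does the algebra by hand (a chain map $z\mapsto x(z)$, kernel computation, surjectivity by induction on $e$-degree). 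Your spectral-sequence idea works if you filter by the fibre index, not by $e$.

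Second, you misidentify where $e^2$ and $F$ come from in the off-diagonal entry. They are not the same count: $F$ is the $e^0$-coefficient and is exactly the Biran--Khanevsky connecting map, while the $e^2$-coefficient comes from equivariant Morse theory on the circle fibre over a single critical point (the paper computes $\mathrm{HM}_{eq}(S^1)\cong\mathbf{k}[e]/(e^2)$ to extract it). The nontrivial work is not matching $e^2$ with $F$ but rather showing that the $e^1$-terms vanish (two separate cancellation arguments) and that nothing else appears. This in turn rests on two geometric inputs you do not mention: a neck-stretching argument forcing all relevant pearls into the symplectization end, and a unique-lifting lemma for holomorphic disks along the $\C^\times$-bundle $SY\to M$. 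Without these you cannot relate pearl trajectories for $L$ to those for $\bar L$, and the index bookkeeping that pins down each matrix entry does not go through.
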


\begin{remark}
  If $\mathrm{QH}^{*}(L)=0$, e.g., if $\mathrm{SH}(\Omega)=0$, then $F$ must be a unit by the Floer-Gysin sequence. Then, if we extend coefficients $\mathbf{k}[e]\to \mathbf{k}[[e]]$ it holds:
  \begin{equation*}
    \mathrm{QH}_{eq}^{*}(L)\otimes_{\mathbf{k}[e]} \mathbf{k}[[e]]\simeq (\mathrm{QH}^{*}(\bar{L})\otimes \mathbf{k}[[e]])/(e^{2}+F)=0,
  \end{equation*}
  since $e^{2}+F$ is a unit in $\mathrm{QH}^{*}(\bar{L})\otimes \mathbf{k}[[e]]$. However, the same conclusion does not hold in our uncompleted version of equivariant quantum cohomology; one has instead \eqref{eq:main-isomorphism} which is often non-zero even when $\mathrm{QH}^{*}(L)=0$.
\end{remark}

\begin{remark}
  This final result can be interpreted as an suitable quantum analogue of a result in algebraic topology relating $\Z/2\Z$- and $S^{1}$-equivariant cohomology, see, e.g., \cite[pp.\,957]{seidel-GAFA-2015} (in the same manner that \cite{biran-khanevsky-CMH-2013} is a quantum analogue of the Gysin sequence in algebraic topology).
\end{remark}

\begin{proof}[Proof of Theorem \ref{theorem:application} using Theorem \ref{theorem:main-computation}]
  Assume $n\ge 2$, as the case $n=1$ can be handled by the elementary argument given in \S\ref{sec:elem-argum-prov}. Then Theorem \ref{theorem:main-computation} applied with $L=\mathrm{pr}^{-1}(\bar{L})$ implies:
  \begin{equation*}
    \mathrm{QH}^{*}_{eq}(L)\simeq \mathrm{QH}^{*}(\bar{L})\oplus \mathrm{QH}^{*-1}(\bar{L})
  \end{equation*}
  Then Theorem \ref{theorem:displacement} implies $L$ is not equivariantly displacable (since the hypothesis of Theorem \ref{theorem:application} is that $\mathrm{QH}^{*}(\bar{L})\ne 0$)
\end{proof}

\subsection{The case of free actions}
\label{sec:case-free-actions}

Suppose the context for Theorem \ref{theorem:main-computation}, i.e., $\Omega$ is a Liouville domain, $\bd\Omega$ has a one-periodic Reeb flow, $\pi_{2}(\Omega,\bd\Omega)=0$, and the time-1/2 flow extends to $\Omega$ as an exact involution $a$. In this section, we are concerned with the case when $a$ acts freely on $\Omega$, and so one has a smooth quotient domain $\Omega'$. Then we can deduce:
\begin{theorem}\label{theorem:application-2}
  If a symplectic involution $a$ acts freely on a Liouville domain $\Omega$ satisfying $\pi_{2}(\Omega,\bd\Omega)=0$, with smooth quotient domain $\Omega'$, and if: $$\bar{L}\subset M=\bd\Omega/(\R/\Z)$$ is a monotone Lagrangian with minimal Maslov number at least $2$, then:
  \begin{equation*}
    \mathrm{QH}^{*}(L')\simeq \mathrm{QH}^{*}(\bar{L})\oplus \mathrm{QH}^{*-1}(\bar{L}),
  \end{equation*}
  where $L'$ is the lift of $\bar{L}$ to $\bd \Omega'$. Equivalently, the Floer-Euler class associated to the covering $L'\to \bar{L}$ vanishes.
\end{theorem}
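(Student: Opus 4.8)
First I would derive Theorem~\ref{theorem:application-2} from Theorem~\ref{theorem:main-computation} together with the principle that, for a \emph{free} $\Z/2\Z$-action, Borel equivariant quantum cohomology collapses to the ordinary quantum cohomology of the quotient. To set the stage, I would record that the downstairs data meet the hypotheses of \S\ref{sec:biran-khanevnsky-floer-gysin-sequence}: since $a$ acts freely on $\Omega$, the map $\Omega\to\Omega'$ is a $2$-fold cover of Liouville domains; the Reeb flow of $\bd\Omega$ descends to $\bd\Omega'$ and, after rescaling time by $2$, is free and one-periodic (the time-$\tfrac12$ flow upstairs being the identity downstairs); the reduction of $\bd\Omega'$ is again $M$, with $\bar L$ the reduced Lagrangian of $L'=L/\langle a\rangle$; $\pi_2(\Omega',\bd\Omega')\cong\pi_2(\Omega,\bd\Omega)=0$ because a finite cover induces isomorphisms on $\pi_{\ge 2}$ of the relevant compact pairs; and $L'$ is monotone with the same minimal Maslov number as $\bar L$. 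Hence Biran--Khanevsky \cite{biran-khanevsky-CMH-2013} furnish a Floer--Gysin triangle
\begin{equation*}
  \cdots\longrightarrow\mathrm{QH}(\bar L)\xrightarrow{\ F'\ }\mathrm{QH}(\bar L)\longrightarrow\mathrm{QH}(L')\longrightarrow\cdots
\end{equation*}
with Floer--Euler class $F'\in\mathrm{QH}^2(\bar L)$, which is the class named in the theorem. As all these spaces are finite dimensional over the Novikov field $\mathbf{k}[q^{-1},q]]$ and $\mathrm{QH}(\bar L)$ is unital, the long exact sequence gives $\dim\mathrm{QH}(L')=2\dim\mathrm{QH}(\bar L)-2\dim(\operatorname{im}F')$, so $F'=0$ is equivalent to $\dim\mathrm{QH}(L')=2\dim\mathrm{QH}(\bar L)$, and also to $\mathrm{QH}^*(L')\simeq\mathrm{QH}^*(\bar L)\oplus\mathrm{QH}^{*-1}(\bar L)$ (when $F'=0$ the triangle breaks into split short exact sequences). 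This accounts for the ``equivalently'' and reduces the theorem to computing $\dim\mathrm{QH}(L')$.

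The key step is the isomorphism $\mathrm{QH}^*_{eq}(L)\simeq\mathrm{QH}^*(L')$, which holds because $a$ acts freely on $L$ (being free on all of $\Omega$); this is the pearl-trajectory counterpart of the topological fact $H^*_{\Z/2\Z}(X;\mathbf{k})\cong H^*(X/(\Z/2\Z);\mathbf{k})$ for free $X$. Concretely, I would choose the Borel data $(P_\eta,J_\eta)$ of \S\ref{sec:equiv-diff} to be $a$-invariant and to depend only on $[\eta]\in\mathrm{RP}^\infty=S^\infty/\langle a\rangle$; this is compatible with the equivariance constraint $a_*(P_\eta,J_\eta)=(P_{a\eta},J_{a\eta})$, since both sides then equal $(P_\eta,J_\eta)$. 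For such data the equivariant pearl configurations contributing to the $d_{i,j}$ project under $L\to L'$ bijectively onto pearl configurations for $L'$ carrying a negative gradient line of the Morse function descended to $\mathrm{RP}^\infty$; passing to the limit over the finite approximations $S^{2N+1}$, the $\mathrm{RP}^\infty$-factor contributes only its trivial reduced cohomology, and one obtains a chain map inducing an isomorphism $\mathrm{QH}^*_{eq}(L)\xrightarrow{\sim}\mathrm{QH}^*(L')$ that is compatible with continuation maps --- hence an isomorphism of the associated connected simple systems and of their limits, in the sense of Remark~\ref{remark:invariance-statement}. (Under this isomorphism $e$ acts as the quantum product by $w_1(L\to L')\in H^1(L';\mathbf{k})$, although only the additive statement is needed.)

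Since the $\Z/2\Z$-action on $\bd\Omega$ is by construction the time-$\tfrac12$ Reeb flow and extends to the exact symplectic involution $a$ of $\Omega$, Theorem~\ref{theorem:main-computation} applies and gives $\mathrm{QH}^*_{eq}(L)\simeq\mathrm{QH}^*(\bar L)\oplus\mathrm{QH}^{*-1}(\bar L)$; combining with the previous paragraph,
\begin{equation*}
  \mathrm{QH}^*(L')\ \simeq\ \mathrm{QH}^*_{eq}(L)\ \simeq\ \mathrm{QH}^*(\bar L)\oplus\mathrm{QH}^{*-1}(\bar L).
\end{equation*}
In particular $\dim\mathrm{QH}(L')=2\dim\mathrm{QH}(\bar L)$, so $F'=0$ by the rank count of the Floer--Gysin triangle above.

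I expect the only genuinely technical point to be the free-action identification: establishing that regular $a$-invariant Borel data descending to $\mathrm{RP}^\infty$ exist (the $a$-symmetric pearl configurations that would obstruct transversality within the invariant class either cannot occur under a free action, or are handled exactly as in the transversality arguments behind Theorem~\ref{theorem:main-computation}) and that the chain-level comparison is natural enough to pass to the limit of the connected simple system defining $\mathrm{QH}^*_{eq}(L)$. Everything else --- transferring the hypotheses to $\Omega'$ and the homological rank count --- is formal, given Theorem~\ref{theorem:main-computation} and Biran--Khanevsky's exact triangle.
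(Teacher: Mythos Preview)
Your proposal is correct and follows essentially the same route as the paper: combine Theorem~\ref{theorem:main-computation} with the free-action identification $\mathrm{QH}^{*}_{eq}(L)\simeq\mathrm{QH}^{*}(L')$ (which the paper states separately as Theorem~\ref{theorem:free-action}), then run a dimension count through the Floer--Gysin triangle to conclude $F'=0$. Your extra care in transferring the Biran--Khanevsky hypotheses to $\Omega'$ and in spelling out the rank identity $\dim\mathrm{QH}(L')=2\dim\mathrm{QH}(\bar L)-2\dim\operatorname{im}F'$ is fine, and the paper's degree-by-degree version of the same count is equivalent.

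One remark on your sketch of the free-action step: the paper's actual argument (in \S\ref{sec:proof-theorem-free-action}) is more hands-on than the ``$\mathrm{RP}^{\infty}$-factor contributes trivially'' heuristic. It takes $(P_{\eta},J_{\eta})$ to be \emph{constant} in $\eta$, lifted from regular data $(P',J')$ on the quotient (this is regular precisely because the action is free; see Theorem~\ref{theorem:regular-b-data}), computes the equivariant differential explicitly as $d_{eq}=\bar d+e(1+a)$, and then shows the chain map $z\mapsto x(z)+y(z)$ is a quasi-isomorphism by a short induction on $e$-degree. Your heuristic is the right intuition, but since $\mathrm{QH}^{*}_{eq}$ is deliberately the \emph{uncompleted} theory over $\mathbf{k}[e]$, a spectral-sequence or limit-over-$S^{2N+1}$ argument would need care, and the direct chain-level computation is cleaner here.
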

What is interesting is the implication:
\begin{equation*}
  \mathrm{QH}^{*}(\bar{L})\ne 0\implies \mathrm{QH}^{*}(L')\ne 0,
\end{equation*}
as it enables one to bootstrap non-vanishing quantum cohomology in a lower dimension to a higher dimension. This result can be applied to, e.g., $T^{*}S^{n}$, $n\ge 3$, to produce monotone Lagrangians in $T^{*}\mathbb{R}P^{n}$ which are disjoint from the zero section and have non-vanishing quantum cohomology (by pulling back Lagrangians in the reduction of $ST^{*}S^{n}$, which is known to be the quadric hypersurface in $\mathbb{C}P^{n}$).

The proof of Theorem \ref{theorem:application-2} is based on a variation of a result of \cite{sampietro-christ-arXiv-2025}:
\begin{theorem}\label{theorem:free-action}
  If $W$ is a tame-at-infinity symplectic manifold with a symplectic involution $a$ which acts freely, and $L\subset W$ is a monotone Lagrangian with minimal Maslov number at least $2$, there is an isomorphism: $$\mathrm{QH}^{*}_{eq}(L)\simeq \mathrm{QH}^{*}(L'),$$ where $L'\subset W'$ is the quotient of $L\subset W$ by the involution.
\end{theorem}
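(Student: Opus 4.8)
The plan is to deduce Theorem~\ref{theorem:free-action} from the classical principle that the Borel construction of a \emph{free} action is homotopy equivalent to the honest quotient, realised at the chain level of \S\ref{sec:equiv-diff}: one chooses the Borel data so that the equivariant pearl complex of $L$ is the homotopy‑orbit complex of the ordinary pearl complex of $L$, observes that the latter is a complex of \emph{free} $\mathbf{k}[\Z/2\Z]$‑modules because $a$ acts freely (so that homotopy orbits coincide with strict orbits), and then identifies the resulting quotient complex with the pearl complex of $L'$.

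First I would record the quotient picture. Since $a$ acts freely, $W\to W'=W/a$ and $L\to L'=L/a$ are smooth double covers, $W'$ inherits a taming at infinity, and $L'$ is a Lagrangian with the same monotonicity constant as $L$ and minimal Maslov number at least $2$ --- every disk bounded by $L'$ lifts (being simply connected) to a disk bounded by $L$ of the same area and Maslov index, so the Maslov spectrum of $L'$ embeds into that of $L$. An $a$‑invariant Morse--Smale pseudogradient $P$ and an $a$‑invariant $\omega$‑tame almost complex structure $J$ on $W$ are exactly the pullbacks of such data $(P',J')$ on $(W',L')$; I would fix $(P',J')$ regular for the ordinary pearl complex of $L'$ and let $(P,J)$ be its pullback, which is then regular for the pearl complex of $L$ since the covering is a local symplectomorphism.

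The crucial step is the choice of Borel data. I would equip $S^{\infty}$ with an antipodally invariant Morse--Smale function $h$ (for instance with a single critical pair in each degree, so that it descends to the standard Morse function on $\mathrm{RP}^{\infty}=S^{\infty}/(\Z/2\Z)$) and take the parametrised family $(P_{\eta},J_{\eta})$ to be the \emph{constant} family equal to $(P,J)$. Because $a$ acts freely on $W$ and antipodally on $S^{\infty}$, the induced diagonal $\Z/2\Z$‑action on every Borel moduli space is then free, so no generic perturbation in the $S^{\infty}$‑direction is needed to achieve transversality: with $\eta$‑independent data each parametrised pearl moduli space is literally a product of an ordinary pearl moduli space in $W$ with a space of negative gradient flow lines of $h$ in $S^{\infty}$, and, after the diagonal $\mathbb{R}$‑reparametrisation, the equivariant differential reduces to $d_{eq}=d^{L}\otimes\id+\id\otimes d_{h}$ with no mixed terms, where $d^{L}$ is the pearl differential of $L$ for $(P,J)$ and $d_{h}$ is the Morse differential of $h$. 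Equivalently, dividing everything by the free $\Z/2\Z$‑action identifies the Borel moduli spaces with parametrised pearl moduli for $(L',W')$ over $\mathrm{RP}^{\infty}$. I expect the main obstacle to be precisely the verification that such decoupled, $a$‑invariant data is admissible as regular Borel data in the sense of \S\ref{sec:equiv-diff} --- this is the pearl‑theoretic analogue of the corresponding step of \cite{sampietro-christ-arXiv-2025}, and a non‑empty fixed locus is exactly what would break both its admissibility and the clean product description.

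Granting this, the conclusion is a formal computation. The Morse complex $\mathrm{CM}(P)$ is free over $\mathbf{k}[\Z/2\Z]$ (its critical points form free orbits), so $(\mathrm{CM}(P)\otimes\mathbf{k}[q^{-1},q]],d^{L})$ is a complex of free $\mathbf{k}[\Z/2\Z]$‑modules, while $\mathrm{CM}(h)$ --- the Morse complex of $S^{\infty}$, whose critical points carry a free action --- is a free resolution of the trivial module $\mathbf{k}$ over $\mathbf{k}[\Z/2\Z]$. Under the evident isomorphism $\mathrm{CM}(P)\otimes\mathbf{k}[q^{-1},q]]\otimes\mathbf{k}[e]\cong\bigl(\mathrm{CM}(P)\otimes\mathbf{k}[q^{-1},q]]\bigr)\otimes_{\mathbf{k}[\Z/2\Z]}\mathrm{CM}(h)$ the equivariant complex becomes the homotopy‑orbit complex of the pearl complex of $L$; being a complex of free modules tensored with a free resolution of $\mathbf{k}$, its homology equals that of $\bigl(\mathrm{CM}(P)\otimes\mathbf{k}[q^{-1},q]]\bigr)\otimes_{\mathbf{k}[\Z/2\Z]}\mathbf{k}=\mathrm{CM}(P')\otimes\mathbf{k}[q^{-1},q]]$, whose differential is the pearl differential $d^{L'}$ of $L'$ for $(P',J')$ (the covering projection $\mathrm{CM}(P)\to\mathrm{CM}(P')$ intertwines $d^{L}$ and $d^{L'}$, pearl trajectories downstairs being exactly the projections of those upstairs). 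Hence the homology is $\mathrm{QH}^{*}(L';P',J')$. Finally, Remark~\ref{remark:invariance-statement} identifies the homology of this particular regular Borel datum with $\mathrm{QH}^{*}_{eq}(L)$, and the invariance of Biran--Cornea's quantum homology identifies $\mathrm{QH}^{*}(L';P',J')$ with $\mathrm{QH}^{*}(L')$, yielding $\mathrm{QH}^{*}_{eq}(L)\simeq\mathrm{QH}^{*}(L')$; keeping track of module structures, the isomorphism is $\mathbf{k}[q^{-1},q]]$‑linear and $e$ acts on the right‑hand side by quantum multiplication with the class in $H^{1}(L';\Z/2\Z)$ classifying the double cover $L\to L'$.
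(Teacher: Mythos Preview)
Your proposal is correct and shares its first half with the paper: both lift regular data $(P',J')$ from $(W',L')$ to $a$-invariant $(P,J)$ on $(W,L)$, take the Borel family to be constant in $\eta$ (the paper records this is admissible precisely when the action is free, in Theorem~\ref{theorem:regular-b-data}), and obtain $d_{eq}=\bar d+e(1+a)$ on $\mathrm{CM}(P)\otimes\mathbf{k}[q^{-1},q]]\otimes\mathbf{k}[e]$.

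The difference is in how the identification with $\mathrm{QH}^*(L')$ is established. The paper writes down the explicit chain map $\phi:\mathrm{CM}(P')\to\mathrm{CM}(P)\otimes\mathbf{k}[q^{-1},q]]\otimes\mathbf{k}[e]$, $z\mapsto x(z)+y(z)$, and proves by a bare-hands induction on the $e$-degree that $\phi$ is injective and surjective on homology. You instead recognise the equivariant complex as the homotopy-orbit complex $C\otimes_{\mathbf{k}[\Z/2\Z]}\mathrm{CM}(h)$ with $C=\mathrm{CM}(P)\otimes\mathbf{k}[q^{-1},q]]$, and use that $C$ is a bounded complex of free $\mathbf{k}[\Z/2\Z]$-modules: filtering by the (finite) Morse grading on $C$, the $E_1$-page is $C\otimes_{\mathbf{k}[\Z/2\Z]}H^*(\mathrm{CM}(h))$, concentrated in $e$-degree $0$, and the spectral sequence collapses to $H^*(C_{\Z/2\Z},\bar d)=\mathrm{QH}^*(L')$. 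Your route is more conceptual and explains \emph{why} the answer is the quotient; the paper's is more elementary and avoids any spectral-sequence or derived-functor language. One small caution worth making explicit: over $\mathbf{k}=\Z/2\Z$ the naive augmentation $\mathrm{CM}(h)\to\mathbf{k}$ sending both degree-$0$ poles to $1$ kills the class $v_{0,+}+v_{0,-}$ and is \emph{not} a quasi-isomorphism; the correct comparison map is the coaugmentation $1\mapsto v_{0,+}+v_{0,-}$, and tensoring it with $C$ reproduces exactly the paper's $\phi$. Your final sentence about $e$ acting by quantum product with the class of the double cover is plausible but is an extra claim not actually proved by what precedes it.
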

\begin{proof}[Proof of Theorem \ref{theorem:application-2}]
  Using Theorem \ref{theorem:main-computation} and Theorem \ref{theorem:free-action} we have:
  \begin{equation*}
    \mathrm{QH}^{*}(L')\simeq \mathrm{QH}_{eq}^{*}(L)\simeq \mathrm{QH}^{*}(\bar{L})\oplus \mathrm{QH}^{*-1}(\bar{L}),
  \end{equation*}
  which gives the isomorphism. To see that the Floer-Euler class vanishes, we count dimensions using the Floer-Gysin sequence for $L'\to \bar{L}$ from \S\ref{sec:biran-khanevnsky-floer-gysin-sequence}:
  \begin{equation*}
    \begin{tikzcd}
      \cdots\arrow[r]&{\mathrm{QH}^{*+1}(L')}\arrow[r,"{A}"] &{\mathrm{QH}^{*}(\bar{L})}\arrow[r,"{F}"] &{\mathrm{QH}^{*+2}(\bar{L})}\arrow[r]&\cdots.
    \end{tikzcd}
  \end{equation*}
  We conclude from this sequence that $\dim \ker A=\dim \mathrm{QH}^{*+1}(\bar{L})$, which implies that $\dim \ker F=\dim \mathrm{QH}^{*}(\bar{L})$, and so multplication by $F$ is the zero map, i.e., $F=0$.
\end{proof}

\subsection{Further questions}
\label{sec:further-questions}

Does an analogous story hold with when $\Z/2\Z$ is replaced\footnote{Let us note that the linear symplectic group acts by $\Z/2\Z$-equivariant Hamiltonian isotopies of $\C^{n}$, but the action of a linear symplectic matrix is rarely $\Z/p\Z$-equivariant.} by another prime cyclic group $\Z/p\Z$? Surely the construction of the invariant and its main formal properties does not require $\Z/2\Z$, (see, e.g., \cite{shelukhin-zhao-JSG-2021} for a generalization of \cite{seidel-GAFA-2015} to other prime cyclic groups). However, it seems our computation of the equivariant cohomology vis-a-vis the circle bundle construction of \cite{biran-khanevsky-CMH-2013} requires further analysis in the case of $\Z/p\Z$ equivariant cohomology if $p>2$.

Another question: can one replace the assumption that $L$ is monotone with the weaker assumption that $\mu(A)>0$ for each class $A$ represented by a non-constant $J$-holomorphic disk? We use the stronger assumption of monotonicity in a seemingly crucial way in the proof that our (uncompleted) equivariant cohomology $\mathrm{QH}^{*}_{eq}$ is sensitive to equivariant displacements.

Another question: in the case of $\C^{n}$, can one prove similar equivariant non-displaceability of Lagrangians results using generating functions à la \cite{viterbo-mathann-1992}?

\subsection{Outline of the rest of the paper}
\label{sec:outline-rest-paper}

It remains to construct $\mathrm{QH}_{eq}^{*}$, and prove it is sensitive to equivariant displacements (Theorem \ref{theorem:displacement}); this is done in \S\ref{sec:equiv-quant-and-equiv-displ}. It also remains to prove Theorems \ref{theorem:main-computation} and \ref{theorem:free-action}; this is done in \S\ref{sec:proof-theorem-main-computation} and \S\ref{sec:proof-theorem-free-action}.

\subsection{Acknowledgements}
\label{sec:acknowledgements}

The first author thanks Daren Chen, Octav Cornea, Laurent Côte, Yakov Eliashberg, and Eric Kilgore for discussions on the topics of equivariant cohomology and pearl trajectories. This paper was written during the second author’s PhD, and he is indebted to his supervisor, Claude Viterbo, for invaluable support. This research was supported by funding from the ANR grant CoSy and the Université de Montréal DMS.

\section{Equivariant quantum cohomology and equivariant displacements}
\label{sec:equiv-quant-and-equiv-displ}

In \S\ref{sec:recoll-lagr-quant} we recall the non-equivariant theory in a bit more detail, and we discuss the special role of disks $u$ with $\mu(u)=2$ in \S\ref{sec:maslov-2-disks}. In \S\ref{sec:equiv-diff} we construct the equivariant differential $d_{eq}$ for regular data and prove that it squares to zero. In \S\ref{sec:cont-isom} we explain that the resulting invariant is independent of the choice of regular data as described in Remark \ref{remark:invariance-statement}. In \S\ref{sec:proof-theorem-displacement}, we show the vanishing of $\mathrm{QH}^{*}_{eq}$ in the presence of an equivariant Hamiltonian displacement.

\subsection{Recollection of Lagrangian quantum cohomology}
\label{sec:recoll-lagr-quant}

We continue the discussion started in \S\ref{sec:quant-cohom}. Let $\mathscr{G}_{k}$ be the (open) parameter space of ordered tuples $b=(b_{1},\dots,b_{k})$ satisfying $-\infty<b_{1}<\dots<b_{k}<\infty$, so $\mathscr{G}_{k}$ is a smooth open manifold of dimension $k$.

A \emph{pearl trajectory} is a tuple $w=(b;u_{1},\dots,u_{k})$ where $b\in \mathscr{G}_{k}$ and:
\begin{equation*}
  \left\{\begin{aligned}
    &u_{\nu}:\R\times [0,1]\to (W,L),\\
    &\bd_{s}u_{\nu}+J(u_{\nu})\bd_{t}u_{\nu}=0,\\
    &0<\textstyle\int \omega(\bd_{s}u_{\nu},\bd_{t}u_{\nu})dsdt<\infty,
  \end{aligned}\right.
\end{equation*}
for $\nu=1,\dots,k$, such that the flow line of $P$ of time $b_{\nu+1}-b_{\nu}$ joins the removable singularities $u_{\nu}(+\infty)$ to $u_{\nu+1}(-\infty)$, $\nu=1,\dots,k-1$. Each pearl trajectory has asymptotic zeros $x_{-},x_{+}$ of $P$ obtained by flowing $u_{1}(-\infty)$ backwards by $P$ and $u_{k}(+\infty)$ forwards by $P$. In other words, $u_{1}(-\infty)$ lies in the stable manifold $S_{x_{-}}$, and $u_{k}(+\infty)$ lies in the unstable manifold $U_{x_{+}}$.

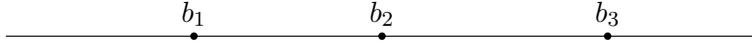
\begin{figure}[h]
  \centering
  \begin{tikzpicture}
    \draw[every node/.style={fill,circle,inner sep=1pt}] (0,0)--node[pos=0.25](A){}node[pos=0.5](B){}node[pos=0.8](C){}(10,0);
    \path (A)node[above]{$b_{1}$}--(B)node[above]{$b_{2}$}--(C)node[above]{$b_{3}$};
  \end{tikzpicture}
  \caption{$\mathscr{G}_k$ is the parameter space of strictly ordered $k$-tuples (shown with $k=3$).}
  \label{fig:parameter-space-Gk}
\end{figure}
\begin{figure}[h]
  \centering
  \begin{tikzpicture}

    \draw[every node/.style={fill=black,circle,inner sep=1pt}] (0,0)--node[pos=0.25](A){}node[pos=0.5](B){}node[pos=0.8](C){}(10,0);
    \draw[every node/.style={fill=white,draw,circle,inner sep=8pt}] (A)node{} (B)node{} (C)node{};
    \node at (0,0) [left]{$x_{-}$};
    \node at (10,0) [right]{$x_{+}$};
  \end{tikzpicture}
  \caption{Illustration of a pearl trajectory $w$ with $k=3$ and asymptotics $x_{-},x_{+}$.}
  \label{fig:parameter-space-Gk}
\end{figure}
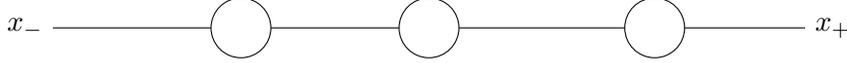

For generic choice of $J$, the space of pearl trajectories $w$ with $k$ pearls and $\mu(w)=\sum \mu(u_{\nu})=Ni$ is a smooth manifold of dimension:
\begin{equation}
  \label{eq:dimension-formula}
  Ni+\mathrm{ind}(x_{+})-\mathrm{ind}(x_{-})+k;
\end{equation}
moreover, the space of pearl trajectories carries a free $\R\times \R^{k}$ action, where the $\R$ action translates $g$, and the $\R^{k}$ action translates the ``pearls'' $u_{1},\dots,u_{k}$.

Still assuming a generic $J$, the space of rigid orbits of this $\R^{k+1}$-action is a compact zero dimensional manifold. The count of these rigid orbits with asymptotics $x_{-},x_{+}$ produces a coefficient $N_{i,k}(x_{-},x_{+})\in \mathbf{k}$, and one defines:
\begin{equation*}
  d_{i}(x_{-}):=\sum_{k=0}^{\infty} N_{i,k}(x_{-},x_{+})x_{+}.
\end{equation*}
Finally, we observe the rigid orbits satisfy $\mathrm{ind}(x_{+})-\mathrm{ind}(x_{-})=1-Ni$; thus $d_{i}$ defines an endomorphism of degree $1-Ni$.

\begin{theorem}[Biran-Cornea]
  For regular data $(P,J)$, the differential: $$d=\sum q^{i}d_{i}$$ squares to zero. (See Remark \ref{remark:non-eq-regular} for the definition of regular).
\end{theorem}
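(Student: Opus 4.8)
The plan is to follow the standard Biran–Cornea argument establishing $d^2=0$ for the pearl complex, organized by the power of $q$. Writing $d=\sum_i q^i d_i$, the identity $d^2=0$ is equivalent to the system $\sum_{i+j=m} d_i d_j = 0$ for every $m\ge 0$; the coefficient of $q^m$ in $d^2$ counts, with multiplicity, the two-step broken configurations $d_i d_j$ contributing total Maslov index $Nm$. So the real content is: for fixed $x_-, x_+$ with $\mathrm{ind}(x_+)-\mathrm{ind}(x_-) = 2 - Nm$, the finite set $\bigsqcup_{i+j=m} \{\text{pairs of rigid pearl trajectories }x_-\to y\to x_+\text{ with Maslov }Ni, Nj\}$ has even cardinality over $\mathbf{k}=\Z/2\Z$.

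First I would set up the relevant moduli space: the one-dimensional component $\mathcal{M}_1(x_-,x_+)$ of (unparametrized) pearl trajectories from $x_-$ to $x_+$ with $\sum_\nu \mu(u_\nu) = Nm$, which by the dimension formula \eqref{eq:dimension-formula} and the quotient by the $\R^{k+1}$-action is a smooth $1$-manifold for generic $J$. The second step is the compactness/gluing analysis: I would invoke Gromov compactness together with the monotonicity hypothesis and $N\ge 2$ to argue that this $1$-manifold admits a compactification to a compact $1$-manifold with boundary, and that the boundary consists precisely of the once-broken configurations. The crucial point where monotonicity and $N\ge 2$ enter is excluding the usual bad degenerations: disk or sphere bubbling at interior or boundary points, and the appearance of trajectories with a "ghost" pearl $u_\nu$ of zero energy — all of these are ruled out on dimension grounds exactly as in \cite{biran-cornea-arXiv-2007,biran-cornea-GT-2009}, since a Maslov-$0$ disk cannot appear (minimal Maslov $\ge 2$) and codimension counts forbid the rest. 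One must also check that breaking at a Morse flow line of length $0$ (i.e.\ two adjacent pearls whose connecting gradient segment degenerates) is itself a genuine boundary phenomenon corresponding to gluing the two disks at a point of $L$, and that this is already accounted for in the pearl combinatorics; this is the subtle bookkeeping step.

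The third step is to read off the algebra: the boundary $\partial \overline{\mathcal{M}}_1(x_-,x_+)$ is identified with $\bigsqcup_{i+j=m}\bigsqcup_y \mathcal{M}_0^{(i)}(x_-,y)\times \mathcal{M}_0^{(j)}(y,x_+)$, whose count is $\sum_{i+j=m}(d_i d_j)$ evaluated on $x_-,x_+$. Since a compact $1$-manifold has an even number of boundary points, this count vanishes in $\Z/2\Z$, which is exactly the $q^m$-component of $d^2=0$. Summing over $m$ gives $d^2=0$.

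I expect the main obstacle to be the gluing/compactness step — specifically, verifying that every boundary stratum of the compactified $1$-dimensional pearl moduli space is genuinely a once-broken pearl trajectory and that the gluing map is a homeomorphism onto a collar neighborhood, with no contributions from configurations where a sphere bubbles off or a pearl's energy escapes to zero. In the monotone setting with $N\ge 2$ these are classical (this is essentially a recollection of Biran–Cornea's theorem, as the parenthetical remark in the statement signals), so in practice I would cite \cite{biran-cornea-arXiv-2007,biran-cornea-GT-2009} for the analytic foundations and restrict the written proof to the combinatorial identification of boundary strata with the algebraic terms $d_i d_j$, together with a precise statement of the regularity hypothesis on $(P,J)$ referenced as Remark \ref{remark:non-eq-regular}.
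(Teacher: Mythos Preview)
Your overall strategy---analyze the boundary of the one-dimensional moduli space $\mathscr{M}_{1}(x_{-},x_{+})$ and identify its ends with the terms of $d^{2}$---is the same as the paper's, and is correct. However, you gloss over two mechanisms that the paper treats as essential, and your description of one of them is not quite right.

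First, the degeneration you call ``Morse flow line of length $0$'' is handled in the paper not as bookkeeping absorbed into the pearl combinatorics, but as a genuine nodal stratum $\mathscr{M}_{\mathrm{node}}$: a configuration where two markers $b_{i},b_{i+1}\in\mathscr{G}_{k}$ collide, or equivalently a single pearl splits into a chain. The paper's point (\S\ref{sec:nodal-curv} and \S\ref{sec:proof-theorem-diff-structural}) is that each such nodal configuration arises as an end of \emph{two different} components of $\mathscr{M}_{1}$ (one with $k$ pearls, one with $k+1$), so these ends are counted with multiplicity two and cancel mod $2$. The relation the paper proves is $\#\partial\mathscr{M}_{1}+2\#\mathscr{M}_{\mathrm{node}}+\#\mathscr{M}_{\mathrm{break}}=0$, where only $\mathscr{M}_{\mathrm{break}}$ (the Morse/fibre-product breakings) contributes to $d^{2}$. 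Your account does not make this double-counting explicit, and the phrase ``already accounted for'' suggests you may be thinking of a different mechanism.

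Second, your claim that bubbling is ruled out on dimension grounds is not quite right in the exceptional case $N=2$, $x_{-}=x_{+}$. There a Maslov-$2$ disk can bubble off leaving a constant underlying trajectory of virtual dimension $-1$, which is not automatically empty. The paper devotes \S\ref{sec:maslov-2-disks} and \S\ref{sec:gradient-bound-exceptional} to this: one imposes extra regularity on the evaluation map $\mathrm{ev}_{2}:\Pi_{2}\to L\times L$ so that $\Pi_{2}^{x}=\mathrm{ev}_{2}^{-1}(S_{x}\times U_{x})$ is a compact $1$-manifold whose boundary gives exactly $\langle x,d^{2}x\rangle=0$. Finally, the paper distinguishes \emph{regular} from \emph{very regular} data (Definitions \ref{definition:regular-b-data} and \ref{definition:very-regular-b-data}): the argument above runs for very regular data, and one passes to merely regular data by an approximation trick (\S\ref{sec:proof-theorem-diff-structural}). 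You do not make this distinction, though the statement is phrased for regular data.
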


This result is a special case of our result that the equivariant differential squares to zero in \S\ref{sec:equiv-diff}. One subtlety $v$ concerns disks with $\mu(v)=2$, discussed next.

\subsection{Disks with Maslov number 2}
\label{sec:maslov-2-disks}

The definition of Lagrangian quantum cohomology depends on a choice of $(P,J)$. It will be important to consider:
\begin{itemize}
\item the moduli space $\mathscr{M}_{2}(L,J)$ of $J$-holomorphic disks $v$ with $\mu(v)=2$,
\item the moduli space $\mathscr{M}_{1}(J)$ of $J$-holomorphic spheres with $c_{1}(u)=1$.
\end{itemize}
Such special treatment of disks with Maslov number 2 appears throughout the literature on enumerating holomorphic disks with boundary on a Lagrangian; we refer the reader to \cite[\S3.3]{biran-cornea-GT-2012}, and \cite{oh-addendum-CPAM-1995} for further discussion.

Each $v\in \mathscr{M}_{2}(L,J)$ or $v\in \mathscr{M}_{1}(J)$ determines a linearized operator: $$D_{v}:W^{1,p}(v^{*}TW)\to L^{p}(v^{*}TW)$$
which is obtained by differentiating the $J$-holomorphic curve equation.
\begin{lemma}\label{lemma:xversality-maslov-2}
  Fix a precompact neighborhood $U$ of $L$ and an $\omega$-tame almost complex structure $J_{0}$. Let $\mathscr{J}(U)$ be the set of $\omega$-tame almost complex structures which agree with $J_{0}$ outside of $U$, topologized with the $C^{\infty}$ topology. There is an open dense set $\mathscr{J}^{\mathrm{reg}}(U)\subset \mathscr{J}(U)$ of complex structures $J$ which render $D_{v}$ surjective for each $v\in \mathscr{M}_{2}(L,J)$ and for each $v\in \mathscr{M}_{1}(J)$ whose image intersects $U$.
\end{lemma}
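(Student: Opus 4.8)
The plan is to run the standard Sard--Smale transversality machinery, treating the almost complex structure $J$ as the variable and checking that the universal moduli space is a smooth Banach manifold; the only genuine subtlety is that disks (and spheres) may be multiply covered, so I must first reduce to the somewhere-injective case. First I would recall that a $J$-holomorphic disk $v$ with $\mu(v)=2$ cannot be multiply covered: if $v=u\circ\phi$ with $\deg\phi\ge 2$ then $\mu(u)\ge 1$ by monotonicity, forcing $\mu(u)=1$ and $\deg\phi=2$, but the minimal Maslov number $N\ge 2$ rules out $\mu(u)=1$. Hence every $v\in\mathscr{M}_2(L,J)$ is somewhere injective. Likewise a sphere with $c_1=1$ is simple, since a $d$-fold cover would have $c_1$ divisible by $d$. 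This is the step I expect to be the main obstacle in the sense that it is the only place where the monotonicity hypothesis and $N\ge 2$ are used in an essential (rather than bookkeeping) way; everything downstream is routine.

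Next I would form the universal moduli space. For the disk case, fix $p>2$ and let $\mathscr{B}$ be the Banach manifold of $W^{1,p}$ maps $(\R\times[0,1],\R\times\{0,1\})\to(W,L)$ (or, equivalently, from the closed disk with boundary on $L$, of the appropriate energy/Maslov class), and consider the section $(v,J)\mapsto \bd_s v+J(v)\bd_t v$ of the relevant Banach bundle over $\mathscr{B}\times\mathscr{J}(U)$. The key computation is that at a zero $(v,J)$ the full linearization — which is $D_v$ plus the term $Y\mapsto Y(v)\bd_t v$ coming from varying $J$ in the directions $Y$ tangent to $\mathscr{J}(U)$ — is surjective. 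Here one uses somewhere injectivity: pick a point $z_0$ in the interior where $dv(z_0)\ne 0$ and $v$ is injective, and note that near $v(z_0)$ one can choose $Y$ supported in $U$ (which is legitimate since $\mathscr{M}_1(J)$ is only constrained for spheres meeting $U$, and $\mathscr{M}_2(L,J)$ consists of disks with boundary on $L\subset U$, so after shrinking we may assume the relevant part of the image lies in $U$) realizing any prescribed value of $Y(v)\bd_t v$ along a small arc; combined with the fact that $D_v$ has closed range of finite codimension, an $L^q$-duality/unique-continuation argument (à la McDuff--Salamon) shows the cokernel must vanish. The sphere case is identical, using an interior point of the sphere whose image lies in $U$.

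Finally I would conclude by the Sard--Smale theorem: the universal moduli space $\mathscr{M}_2(L)^{\mathrm{univ}}=\{(v,J):\bd_s v+J\bd_t v=0\}$ is a separable Banach manifold, the projection to $\mathscr{J}(U)$ is Fredholm, and its regular values form a dense (indeed, by a standard argument using that the moduli space has bounded energy hence is covered by countably many compact pieces, a residual-then-open) set $\mathscr{J}^{\mathrm{reg}}_2(U)$; for $J$ in this set, $D_v$ is surjective for every $v\in\mathscr{M}_2(L,J)$. Doing the same for $\mathscr{M}_1(J)$ (intersecting $U$) gives $\mathscr{J}^{\mathrm{reg}}_1(U)$, and $\mathscr{J}^{\mathrm{reg}}(U):=\mathscr{J}^{\mathrm{reg}}_1(U)\cap\mathscr{J}^{\mathrm{reg}}_2(U)$ works. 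Openness is automatic: surjectivity of $D_v$ is an open condition and, since $\mu(v)=2$ (resp.\ $c_1=1$) pins down the energy via monotonicity, Gromov compactness prevents regularity from degenerating under small perturbations of $J$ — any sequence $J_n\to J$ with a non-regular $v_n$ would, after passing to a limit, produce either a non-regular $J$-curve or a bubble tree, and the latter is excluded by the minimal energy of the classes in question.
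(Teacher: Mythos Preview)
Your overall strategy---reduce to somewhere-injective curves, then run the standard Sard--Smale argument with $J$ varying inside $U$---matches the paper's approach, and your treatment of the sphere case and of openness is fine. There is, however, a real gap in the disk case. You argue that a disk $v$ with $\mu(v)=2$ ``cannot be multiply covered'' by writing $v=u\circ\phi$ with $\deg\phi\ge 2$ and deriving a contradiction, and from this you conclude that $v$ is somewhere injective. For holomorphic \emph{spheres} the dichotomy ``somewhere injective or $v=u\circ\phi$ for a simple $u$'' is a standard theorem, and your $c_{1}=1$ argument is correct. For holomorphic \emph{disks} this dichotomy is not available: a non-simple disk need not factor globally through a single simple disk via a branched self-map of $D$, so ruling out the factorization $v=u\circ\phi$ does not by itself force somewhere-injectivity.

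The correct tool is Lazzarini's decomposition theorem, which shows that any $J$-holomorphic disk with boundary on $L$ decomposes (along a certain graph in the domain) into pieces, each of which is a multiple cover of a simple disk. Combining this with monotonicity and $N\ge 2$, a disk with $\mu(v)=2$ can have only one piece with multiplicity one, hence is simple and has injective points mapped into $U$. The paper invokes exactly this result, citing Lazzarini via \cite[Theorem~3.2.1]{biran-cornea-arXiv-2007}. Once you insert this step in place of the unjustified implication ``not multiply covered $\Rightarrow$ somewhere injective,'' the remainder of your argument goes through as written.
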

\begin{proof}
  It is straightforward to adapt the arguments of \cite{mcduff-salamon-book-2012} to handle those $v$ which have ``injective points'' mapped into $U$. Each sphere $v$ with Chern number $1$ cannot be a multiple cover, and so $v$ must simple\footnote{\emph{simple}, in the sense that $v$ is an injective holomorphic immersion on the complement of a finite set of critical points and self-intersections.} by the work of \cite{micallef-white}. It follows that if $v$ intersects $U$ then $v$ has injective points mapped into $U$. In the case of holomorphic disks with Maslov number $2$, we follow \cite[\S3.2]{biran-cornea-arXiv-2007} to explain why we can assume such injective points exist. The key result \cite[Theorem 3.2.1]{biran-cornea-arXiv-2007} is due to \cite{lazzarini-1,lazzarini-2} which proves that any holomorphic disk $v$ can be decomposed into multiple covers of simple disks. If $v$ has the minimal Maslov number $2$, it must therefore have injective points passing through $U$. This completes the proof.
\end{proof}

To each $J\in \mathscr{J}^{\mathrm{reg}}(U)$ we can then define:
\begin{equation*}
  \begin{aligned}
    \Pi_{2}&=(\mathscr{M}_{2}(L,J)\times \bd D(1)\times \bd D(1))/\mathrm{Aut}(D(1)),\\
    \Pi_{1}&=\mathrm{ev}_{1}^{-1}(U)\text{ where }\mathrm{ev}_{1}:(\mathscr{M}_{1}(J)\times S^{2})/\mathrm{Aut}(S^{2})\to W,
  \end{aligned}
\end{equation*}
with evaluation maps $\mathrm{ev}_{2}:\Pi_{2}\to L\times L$ and $\mathrm{ev}_{1}:\Pi_{1}\to W$. By the regularity ensured by Lemma \ref{lemma:xversality-maslov-2}, $\Pi_{2}$ is a smooth manifold of dimension $n+1$ and $\Pi_{1}$ is a smooth manifold of dimension $2n-2$.

\begin{lemma}
  Given a Morse-Smale pseudogradient $P$, there is an open and dense subset $\mathscr{J}^{\mathrm{reg}}(P,U)\subset \mathscr{J}^{\mathrm{reg}}(U)$ such that:
  \begin{enumerate}
  \item $\mathrm{ev}_{1}^{-1}(x)=\emptyset$ for each zero $x$ of $P$,
  \item $\mathrm{ev}_{2}$ is transverse to $S_{x}\times U_{y}$ for each pair of zeros $x,y$ of $P$, where $S_{x}$ and $U_{y}$ are the stable and unstable manifolds, respectively.
  \end{enumerate}
\end{lemma}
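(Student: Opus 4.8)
The plan is a parametric transversality argument in the style of \cite{mcduff-salamon-book-2012}: both (1) and (2) ask for generic avoidance of, respectively transversality to, a \emph{finite} collection of submanifolds of $W$, respectively $L\times L$, so it is enough to prove the good set of $J$ is \emph{dense} via the Sard--Smale theorem applied to universal moduli spaces, and then \emph{open} via Gromov compactness. As usual one runs the density argument over a Banach manifold of $C^{\ell}$ almost complex structures and passes back to the $C^{\infty}$ topology by the Taubes trick; I suppress this point.

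\emph{Density.} For (1), form the universal moduli space $\widetilde{\Pi}_{1}$ of triples $(J,v,z)$ with $J\in\mathscr{J}^{\mathrm{reg}}(U)$, $v\in\mathscr{M}_{1}(J)$, and $z\in S^{2}$, modulo $\mathrm{Aut}(S^{2})$, with universal evaluation $\widetilde{\mathrm{ev}}_{1}(J,v,z)=v(z)$. A $J$-sphere of Chern number $1$ is not multiply covered, hence simple by \cite{micallef-white}, hence has an injective point, which on the open locus where $v$ meets $U$ can be taken to lie in $U$; perturbing $J$ in a small ball about it shows $\widetilde{\mathrm{ev}}_{1}$ is a submersion over $U$. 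Then $\widetilde{\mathrm{ev}}_{1}^{-1}(x)$ is a Banach submanifold whose projection to $\mathscr{J}^{\mathrm{reg}}(U)$ is Fredholm of index $\dim\Pi_{1}-2n=-2$; the regular values of this projection form a residual set (Sard--Smale), and, the index being negative, such regular values have \emph{empty} fibre, i.e.\ $\mathrm{ev}_{1}^{-1}(x)=\emptyset$. For (2), form $\widetilde{\Pi}_{2}$ of tuples $(J,v,z_{0},z_{1})$ with $v\in\mathscr{M}_{2}(L,J)$ and $z_{0},z_{1}\in\bd D(1)$, modulo $\mathrm{Aut}(D(1))$, with $\widetilde{\mathrm{ev}}_{2}=(v(z_{0}),v(z_{1}))$; by the Lazzarini decomposition invoked in the proof of Lemma \ref{lemma:xversality-maslov-2}, a disk of minimal Maslov number $2$ is simple and has injective \emph{boundary} points, so off the positive-codimension locus where $z_{0}=z_{1}$ or $v(z_{0})=v(z_{1})$ one perturbs $J$ in disjoint balls about $v(z_{0})$ and $v(z_{1})$ to make $\widetilde{\mathrm{ev}}_{2}$ a submersion, the exceptional locus (where $\mathrm{ev}_{2}$ maps into the diagonal) being of positive codimension and absorbed by restricting to it and rerunning the argument. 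Hence $\widetilde{\mathrm{ev}}_{2}^{-1}(S_{x}\times U_{y})$ is a Banach submanifold, and Sard--Smale for its projection yields a residual set of $J$ with $\mathrm{ev}_{2}\pitchfork S_{x}\times U_{y}$. Since $P$ has finitely many zeros, intersecting these residual sets over all $x$ (for (1)) and all ordered pairs $(x,y)$ (for (2)) still gives a dense set, which we call $\mathscr{J}^{\mathrm{reg}}(P,U)$.

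\emph{Openness.} For (1): if $J_{k}\to J$ with Chern-$1$ spheres $v_{k}\in\mathscr{M}_{1}(J_{k})$ through a zero $x$ (so $v_{k}$ meets $U\supset L$), then monotonicity bounds their energy and no bubbling is possible in total Chern number $1$, so Gromov compactness yields a limit in $\mathscr{M}_{1}(J)$ through $x$ meeting $U$; thus the complement of the set (1) is closed. For (2): having imposed transversality to $S_{x}\times U_{y}$ for \emph{all} pairs, transversality is preserved under small perturbation of $J$ once one checks the Gromov limits of Maslov-$2$ disks---either Maslov-$2$ disks again, or a ghost disk carrying a Chern-$1$ sphere bubble---cannot create new non-transverse intersections with the strata $S_{x'}\times U_{y'}$, the sphere-bubble case being controlled by (1) (and, if necessary, by also adjoining the analogous open-dense condition that $\mathrm{ev}_{1}|_{\Pi_{1}}$ avoids $\overline{S_{x}}\cap\overline{U_{y}}$). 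So $\mathscr{J}^{\mathrm{reg}}(P,U)$ is open.

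The step I expect to be the main obstacle is the submersion property of $\widetilde{\mathrm{ev}}_{2}$: one needs a Maslov-$2$ disk to have enough injective \emph{boundary} points, and must separately handle the diagonal where the two marked points collide or have equal image. This is exactly where the analysis of Maslov-$2$ disks of \cite{lazzarini-1,lazzarini-2}, already used in Lemma \ref{lemma:xversality-maslov-2}, is essential; the rest is routine dimension counting combined with Sard--Smale and Gromov compactness.
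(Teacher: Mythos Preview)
Your proposal is correct and follows essentially the same route as the paper: density via Sard--Smale applied to the universal moduli spaces, with simplicity of Chern-$1$ spheres and Maslov-$2$ disks (from Lemma~\ref{lemma:xversality-maslov-2}/Lazzarini) supplying the injective points needed for surjectivity of the universal linearization, and openness handled separately. The paper's own proof is extremely terse---it simply asserts that (1) and (2) are open conditions and cites \cite{mcduff-salamon-book-2012} for density---so your treatment is considerably more detailed; your flagged concern about the submersion property of $\widetilde{\mathrm{ev}}_{2}$ and the sphere-bubble degenerations in the openness argument for (2) are exactly the points the paper elides.
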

\begin{proof}
  The conditions are open conditions, so only needs to prove their density; this in turn follows from standard application of the Sard-Smale theorem as in \cite{mcduff-salamon-book-2012}, together with the injectivity considerations from Lemma~\ref{lemma:xversality-maslov-2}. The first condition is equivalent to $\mathrm{ev}_{1}$ being transverse to each singleton $\set{x}$, since $\dim\Pi_{1}<\mathrm{codim}\set{x}$.
\end{proof}

For $J\in \mathscr{J}^{\mathrm{reg}}(P,U)$ it then follows that:
\begin{equation*}
  \Pi_{2}^{x}:=\mathrm{ev}^{-1}_{2}(S_{x}\times U_{x})
\end{equation*}
is an open $1$-manifold whose non-compact ends are in bijection with the zero manifold:
\begin{equation*}
  \bd \Pi_{2}^{x}=\mathrm{ev}^{-1}_{2}(\bd S_{x}\times U_{x})\cup \mathrm{ev}_{2}^{-1}(S_{x}\times \bd U_{x}),
\end{equation*}
where $\bd S_{x}$ and $\bd U_{x}$ are the codimension $1$ boundaries. This observation is used to prove that $d^{2}=0$ in the case $N=2$.

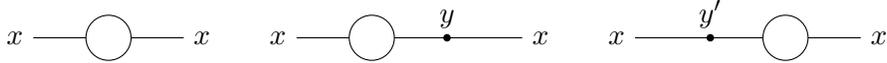
\begin{figure}[h]
  \centering
  \begin{tikzpicture}
    \begin{scope}
      \draw (0,0)node[left]{$x$}--(2,0) node[right]{$x$};
      \draw[fill=white] (1,0) circle (0.3);
    \end{scope}
    \begin{scope}[shift={(3.5,0)}]
      \draw (0,0)node[left]{$x$}--(3,0) node[right]{$x$};
      \node[fill,inner sep=1pt,circle] (X) at (2,0) {};
      \node at (X)[above]{$y$};
      \draw[fill=white] (1,0) circle (0.3);
    \end{scope}
    \begin{scope}[shift={(8,0)}]
      \draw (0,0)node[left]{$x$}--(3,0) node[right]{$x$};
      \node[fill,inner sep=1pt,circle] (X) at (1,0) {};
      \node at (X)[above]{$y'$};
      \draw[fill=white] (2,0) circle (0.3);
    \end{scope}
  \end{tikzpicture}
  \caption{The 1-manifold $\Pi^{x}_{2}$ and example boundary configurations. Here $y$ is in the unstable manifold of $x$ and $y'$ is in the stable manifold of $x$.}
  \label{fig:1-manifold-Pi-x-2}
\end{figure}

\subsection{The equivariant differential}
\label{sec:equiv-diff}

We continue from \S\ref{sec:equiv-quant-cohom}. The goal is to define the moduli spaces used in the definition of the equivariant differential, and define a notion of \emph{regular data}; such data will yield a well-defined equivariant differential which squares to zero.

\subsubsection{Borel data}
\label{sec:borel-data}

The terms in the equivariant differential are defined using data parametrized by: $$S^{\infty}=\set{(\eta_{0},\eta_{1},\dots):\sum \eta_{i}^{2}=1\text{ and }\eta_{i}=0\text{ for }i\text{ sufficiently large}}.$$

The required data is:
\begin{enumerate}
\item\label{item:eq-data-1} a family $P_{\eta}$ of vector fields on $L$, $\eta\in S^{\infty}$,
\item\label{item:eq-data-2} a family $J_{\eta}$ of $\omega$-tame almost complex structures, $\eta\in S^{\infty}$,
\end{enumerate}
and this so-called \emph{Borel data} is required to satisfy the following properties:
\begin{enumerate}[resume]
\item\label{item:eq-data-3} $(P_{-\eta},J_{-\eta})=(a^{*}P_{\eta},a^{*}J_{\eta})$,
\item\label{item:eq-data-4} $(P_{\eta},J_{\eta})=(P,J)$ if $\eta_{j}\ge 4/5$,
\item\label{item:eq-data-5} $(P_{\tau(\eta)},J_{\tau(\eta)})=(P_{\eta},J_{\eta})$,
\end{enumerate}
where $\tau:S^{\infty}\to S^{\infty}$ is the self-similarity shift map: $$\tau(\eta_{0},\eta_{1},\dots)=(0,\eta_{0},\eta_{1},\dots).$$
It is important to note that the neighborhoods $\set{\eta_{j}\ge 4/5}$ and $\set{\eta_{j}\le -4/5}$ are mutually disjoint neighborhoods of the ``poles:''
\begin{equation*}
  v_{j,\pm}:=(0,\dots,0,\pm 1,0,\dots)\text{ where the $\pm 1$ is in the $j$th position.}
\end{equation*}
As explained in \cite{seidel-smith-GAFA-2010} and \cite[\S3]{seidel-GAFA-2015}, there is a vector field $\mathfrak{S}$ on $S^{\infty}$ which behaves as a Morse-Smale pseudogradient for the function $\sum j\eta_{j}^{2}$. To be precise, we will take $\mathfrak{S}$ as the generator of the flow:
\begin{equation*}
  [\eta_{0}:\eta_{1}:\eta_{2}:\dots]\mapsto [\eta_{0}:e^{t}\eta_{1}:e^{2t}\eta_{2}:\dots],
\end{equation*}
defined on $\mathrm{RP}^{\infty}$ using real projective coordinates (this lifts uniquely to a pseudogradient on $S^{\infty}$). Let us note that:
\begin{itemize}
\item the poles $v_{j}^{\pm}$ are the zeros of $\mathfrak{S}$,
\item $\mathfrak{S}(-z)=-\mathfrak{S}(z)$, using the identification of $TS^{\infty}_{z}$ and $TS^{\infty}_{-z}$,
\item $\mathfrak{S}$ is tangent to $S^{n}\subset S^{\infty}$,
\item $\mathfrak{S}(\tau(z))=\tau_{*}\mathfrak{S}(z)$.
\end{itemize}
Associated to this pseudogradient is the space of parametrized flow lines $\mathscr{P}^{j,\pm}$ asymptotic to $v_{0,+}$ and $v_{j,\pm}$, which forms an (open) manifold of dimension $j$, and carries an $\R$-action by translation.

\subsubsection{Definition of the differential}
\label{sec:defin-diff}

The terms $d_{i,j}$ are of the form:
\begin{equation*}
  d_{i,j}=\sum_{k=0}^{\infty}d_{i,j,k}^{+}+d_{i,j,k}^{-},
\end{equation*}
where $d_{i,j,k}^{\pm}$ is defined by counting pearl trajectories in the parametric moduli space of solutions $w=(b,\pi,u_{1},\dots,u_{k})$ where:
\begin{equation*}
  \left\{
    \begin{aligned}
      &b=(b_{1},\dots,b_{k})\in \mathscr{G}_{k}\text{ and }\pi\in \mathscr{P}^{j,+},\\
      &\bd_{s}u_{\nu}+J_{\pi(b_{\nu})}\bd_{t}u_{\nu}=0\text{ and }0<\omega(u_{\nu})<\infty,\text{ for }\nu=1,\dots,k,\\
      &\mu(u_{1})+\dots+\mu(u_{k})=Ni,
    \end{aligned}
  \right.
\end{equation*}
such that the flow line of $P_{\pi(s)}$, $s\in [b_{\nu},b_{\nu+1}]$ joins $u_{\nu}(+\infty)$ to $u_{\nu+1}(-\infty)$.

Let us denote this moduli space by $\mathscr{M}_{i,j,k}^{\pm}$. As above, this has an $\R^{k+1}$ action, and we are interested in the rigid orbits.

In the ``$+$'' case, such a solution has asymptotic zeros $x_{-}(w),x_{+}(w)$ of $P$ at its ends, by flowing $b_{1}(-\infty)$ backwards by $P_{\pi(s)}$, $s\in (-\infty,b_{1}]$, and by flowing $b_{k}(+\infty)$ forwards by $P_{\pi(s)}$, $s\in [b_{k},\infty)$. Define:
\begin{equation*}
  N_{i,j,k}^{+}(x_{-},x_{+})=\text{count of rigid orbits of $\mathscr{M}^{+}_{i,j,k}$ with asymptotics $x_{-},x_{+}$}.
\end{equation*}

In the ``$-$'' case, one can still associate zeros $x_{-},x_{+}$ of $P$, where $x_{-}$ is as above, but now $a(x_{+})$ (instead of $x_{+}$) is the asymptotic of the trajectory of $P_{\pi(s)}$, $s\in [b_{k},\infty)$, starting at $u_{k}(+\infty)$. In words, the ``$-$'' case twists the geometric asymptotic by the involution $a$. With this convention, each solution $w$ has asymptotic zeroes $x_{\pm}(w)$ of $P$. Define:
\begin{equation*}
  N_{i,j,k}^{-}(x_{-},x_{+}):=\text{count of rigid orbits of $\mathscr{M}^{-}_{i,j,k}$ with asymptotics $x_{-},x_{+}$}.
\end{equation*}
This twisting of the asymptotics is the same as in \cite{seidel-smith-GAFA-2010,seidel-GAFA-2015}.

Our main structural result is the following:
\begin{theorem}\label{theorem:diff-structural}
  If $(P,J)$ are regular for defining the quantum comology, then they can be extended to Borel data $(P_{\eta},J_{\eta})$ satisfying \ref{item:eq-data-1}--\ref{item:eq-data-5} which is regular in the sense of Definition \ref{definition:regular} below. In this case, the endomorphisms of $\mathrm{CM}(P)$ given by:
  \begin{equation*}
    d_{i,j,k}^{\pm}(x_{-})=\sum_{x_{+}}N_{i,j,k}^{\pm}(x_{-},x_{+})x_{+}
  \end{equation*}
  have degree $1-j-Ni$ and $d_{eq}=\sum e^{j}q^{i}d_{i,j,k}^{\pm}$ squares to zero; the specialization to $j=0$ agrees with the non-equivariant quantum differential.
\end{theorem}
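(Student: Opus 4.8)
The plan is to follow the standard SFT-style compactness-and-gluing framework adapted to the Biran--Cornea pearl setting and the Seidel--Smith Borel parametrization. First I would set up the chain-level equations carefully: the moduli space $\mathscr{M}_{i,j,k}^{\pm}$ consists of pearls $u_{1},\dots,u_{k}$ whose conformal/almost-complex structure parameter is pulled back along $\pi\in\mathscr{P}^{j,\pm}$, glued by Morse flow lines of the $\pi$-parametrized pseudogradient. The index computation giving degree $1-j-Ni$ is a routine linear-gluing count: the $\mathscr{P}^{j,\pm}$ contributes $j$ to the dimension, each pearl $u_\nu$ contributes $\mu(u_\nu)$, the parameter space $\mathscr{G}_k$ contributes $k$, and quotienting by the $\R^{k+1}$-action subtracts $k+1$; setting the total dimension to zero (rigidity) and comparing with the dimension formula \eqref{eq:dimension-formula} yields $\operatorname{ind}(x_+)-\operatorname{ind}(x_-)=1-j-Ni$, i.e.\ the stated degree. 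The specialization $j=0$ statement is essentially tautological: the space $\mathscr{P}^{0,+}$ is a point (the constant flow line at $v_{0,+}$), where by condition \ref{item:eq-data-4} the data equals $(P,J)$, so $\mathscr{M}_{i,0,k}^{+}$ is exactly the Biran--Cornea pearl moduli space and $\mathscr{M}_{i,0,k}^{-}$ is empty for degree reasons (the $-$-case only receives contributions once $\mathscr{P}^{j,-}$ is nonempty, $j\ge 1$), so $d_{eq}|_{e=0}=d$.

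The substance is $d_{eq}^{2}=0$, which I would prove by the usual argument: compactify the $1$-dimensional components of the rigid-orbit-quotient of $\mathscr{M}_{i,j,k}^{\pm}$ and show the codimension-$1$ boundary is exactly the set of broken configurations contributing to $(d_{eq}^{2})_{i,j}$. The boundary strata come in three flavors: (a) breaking of a Morse flow line of the $\pi$-parametrized pseudogradient, which glues two pearl moduli spaces at an intermediate critical point of $P$ (Morse--Smale transversality, guaranteed by condition \ref{item:eq-data-4} fixing the data near the poles, and the parametrized transversality in Definition~\ref{definition:regular}); (b) breaking of the Morse flow line $\pi$ in $\mathscr{P}^{j,\pm}$ itself, which by the Seidel--Smith structure of $\mathfrak{S}$ breaks at an intermediate pole $v_{\ell,\pm}$ and — via the self-similarity condition \ref{item:eq-data-5} together with the identification $\mathscr{P}^{j,+}\times\mathscr{P}^{\ell'}$-type gluings — matches the composition $d_{i',j',k'}^{\pm}\circ d_{i'',j'',k''}^{\pm}$ with $i'+i''=i$, $j'+j''=j$; and (c) bubbling of pearls: a pearl $u_\nu$ with $\mu(u_\nu)\ge N$ can degenerate into a shorter pearl plus an extra sphere or disk, but by monotonicity and $N\ge 2$ the only new phenomenon is disk bubbling with $\mu=2$ (when $N=2$) or sphere bubbling with $c_1=1$, which is handled by the $\Pi_2,\Pi_1$ analysis of \S\ref{sec:maslov-2-disks} — the key point being that for regular data the evaluation maps avoid the critical points and the $\Pi_2^x$ arcs have matching ends, so these contributions cancel in pairs and do not affect $d_{eq}^{2}$. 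I would organize (c) exactly parallel to the non-equivariant $d^2=0$ proof, noting the $\pi$-parameter is a spectator for bubbling since bubbling is a local phenomenon at a point of $\R\times[0,1]$.

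The $\Z/2\Z$-equivariance bookkeeping is the part requiring genuine care: I must check that the $+$ and $-$ labels interact correctly under the antipodal symmetry \ref{item:eq-data-3}, so that when the flow line $\pi\in\mathscr{P}^{j,-}$ breaks through a pole $v_{\ell,-}$ the two pieces are correctly a $-$-piece followed by a $+$-piece (or vice versa), reproducing precisely the cross-terms in the expansion of $d_{eq}^2$ that involve the asymptotic twist by $a$; this is where condition \ref{item:eq-data-3} and the property $\mathfrak{S}(-z)=-\mathfrak{S}(z)$ are used, and it is the direct analogue of the sign/twist bookkeeping in \cite{seidel-smith-GAFA-2010,seidel-GAFA-2015}. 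I expect this combinatorial matching of broken configurations with the terms of $d_{eq}^2$ — tracking which $(\ell,\pm)$-pole a broken $\mathfrak{S}$-trajectory passes through and confirming the induced factorization of indices $(i,j,k)$ — to be the main obstacle, though it is obstacle-by-bookkeeping rather than by new analysis; the analytic inputs (Gromov compactness for pearls, Morse breaking, Lazzarini-type decomposition for Maslov-2 disks, Sard--Smale transversality) are all either classical or already recorded in Lemma~\ref{lemma:xversality-maslov-2} and \S\ref{sec:maslov-2-disks}. Regularity in the sense of Definition~\ref{definition:regular} would be invoked precisely to make all the relevant parametrized moduli spaces cut out transversally and all boundary strata of the expected codimension.
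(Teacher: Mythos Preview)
Your overall framework---compactify the one-dimensional moduli space, enumerate codimension-one strata, match them to $d_{eq}^{2}$---is the paper's approach, and your treatment of the degree formula, the $j=0$ specialization, and the Morse-type breakings (a) and (b) is correct. But there is a genuine gap in (c).

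You conflate \emph{strip breaking} (a pearl $u_{\nu}$ on $\R\times[0,1]$ breaking at $s=\pm\infty$ into a chain of two holomorphic strips) with \emph{bubbling} (a disk or sphere pinching off at a point), and you entirely omit the collision of adjacent markers $b_{\nu},b_{\nu+1}$ in $\mathscr{G}_{k}$. Strip breaking and marker collision both produce what the paper calls \emph{nodal curves} (\S\ref{sec:nodal-curv}): configurations with several pearls sitting at the same parameter value. These are not terms in $d_{eq}^{2}$; rather, each nodal curve is an interface between two components of $\mathscr{M}_{1}$ with different pearl counts $k$, and it glues in \emph{two} distinct ways---once as a strip breaking (increasing $k$) and once as a marker collision (decreasing $k$)---so nodal contributions cancel mod $2$. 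The identity the paper actually proves is
\begin{equation*}
  \#\bd\mathscr{M}_{1}(x,y)+2\,\#\mathscr{M}_{\mathrm{node}}(x,y)+\#\mathscr{M}_{\mathrm{break}}(x,y)=0\bmod 2,
\end{equation*}
and only $\mathscr{M}_{\mathrm{break}}$ (your (a) and (b) combined) encodes $d_{eq}^{2}$. Genuine bubbling is separately excluded by a gradient bound (the minimal Maslov number $N\ge 2$ forces it to codimension $\ge 2$), except in the single exceptional component $x_{-}=x_{+}$, $\mu=2$, $\dim\mathscr{P}=0$, which is where the $\Pi_{2}^{x}$ argument of \S\ref{sec:maslov-2-disks} is used.

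A smaller point: the paper first proves $d_{eq}^{2}=0$ for \emph{very regular} data (Definition~\ref{definition:very-regular-b-data}), where solutions with $\mathrm{vdim}\le 1-\mathfrak{n}$ are transversally cut out, and then upgrades to merely \emph{regular} data (Definition~\ref{definition:regular-b-data}) by approximation: very regular data is dense, and the zero-dimensional counts defining $d_{eq}$ stabilize along an approximating sequence. You appeal to Definition~\ref{definition:regular} for the one-dimensional components directly, but regular data only guarantees transversality for $\mathrm{vdim}\le -\mathfrak{n}$, not $\le 1-\mathfrak{n}$.
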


The precise notion of regularity is given in Definition \ref{definition:regular-b-data}, and Theorem \ref{theorem:diff-structural} is proved in \S\ref{sec:proof-theorem-diff-structural}.

\subsubsection{Linearized operators}
\label{sec:linearized-operators}

In this section we consider classes of Borel data which achieve transversality for various moduli spaces.

Let $w=(b,\pi,u_{1},\dots,u_{k})$ be a pearl trajectory with $k$ pearls, total Maslov number $Ni$, and with $\pi\in \mathscr{P}=\mathscr{P}^{j,\pm}$, using Borel data $(P_{\eta},J_{\eta})$. Here we assume that the data $(P,J)$ near the positive poles is generic in the sense of \S\ref{sec:maslov-2-disks} if the minimal Maslov number is $2$.

We define the \emph{ambient tangent space} of $w$ to be:
\begin{equation*}
  AT_{w}=W^{1,p,\delta}(u_{1}^{*}TW)\oplus \dots\oplus W^{1,p,\delta}(u_{k}^{*}TW)\oplus T\mathscr{G}_{k,b}\oplus T\mathscr{P}_{\pi}\oplus TL^{\oplus 2k};
\end{equation*}
where we abbreviate $W^{1,p,\delta}(u_{\nu}^{*}TW)=W^{1,p,\delta}(u_{\nu}^{*}TW,u_{\nu}^{*}TL)$  and:
\begin{equation*}
  TL^{\oplus 2k}=TL_{p_{1,-}}\oplus TL_{p_{1,+\infty}}\oplus \dots \oplus TL_{p_{k,-}}\oplus TL_{p_{k,+}},
\end{equation*}
with $p_{i,\pm}=u_{i}(\pm \infty)$. This is the tangent space of all variations of $w$, including those which do not respect the incidence conditions.

The usual Floer theory linearization process produces a linear operator:
\begin{equation*}
  AD_{w}:AT_{w}\to L^{p,\delta}(u_{1}^{*}TW)\oplus \dots \oplus L^{p,\delta}(u_{k}^{*}TW);
\end{equation*}
which maps $W^{1,p,\delta}(u_{i}^{*}TW)$ into $L^{p,\delta}(u_{i}^{*}TW)$ as an operator of Cauchy-Riemann type (in particular, as a Fredholm operator). For related details, we refer the reader to \cite[\S4]{cant-thesis-2022}, \cite[\S4.1.2]{brocic-cant-JFPTA-2024}, and \cite[\S4.4]{biran-cornea-arXiv-2007}.

To incorporate the incidence conditions, we define:
\begin{equation*}
  \mathscr{I}\subset T\mathscr{G}_{k,b}\oplus TP_{\pi}\oplus TL^{\oplus 2k}
\end{equation*}
to be the subspace of \emph{coincident variations}: to state the definition, let us denote by $\phi_{b_{1},b_{2}}^{\pi}$ the flow by $P_{\pi(s)}$ on the interval $[b_{1},b_{2}]$, where $b_{1}<b_{2}$. For each $i=1,\dots,k-1$, differentiating:
\begin{equation*}
  \phi^{\pi}_{b_{i},b_{i+1}}(p_{i,+})=p_{i+1,-},
\end{equation*}
gives a linear map:
\begin{equation*}
  C_{i}:T\mathscr{G}_{b}\oplus T\mathscr{P}_{\pi}\oplus TL_{p_{i,+}}\to TL_{p_{i+1,-}}.
\end{equation*}
Similarly one can differentiate the relation that $\phi^{\pi}_{-\infty,b_{1}}(p_{1,-})$ lies in the stable manifold of $x_{-}$ and the relation that $\phi^{\pi}_{b_{k},+\infty}(p_{k,+})$ lies in the unstable manifold of $x_{+}$ as follows: flowing for a sufficiently long time and then projecting onto unstable and stable spaces produces maps:
\begin{equation*}
  \begin{aligned}
    &C_{0}:T\mathscr{G}_{k,b}\oplus T\mathscr{P}_{\pi}\oplus TL_{p_{1,-}}\to U_{x_{-}},\\
    &C_{k}:T\mathscr{G}_{k,b}\oplus T\mathscr{P}_{\pi}\oplus TL_{p_{k,+}}\to S_{x_{+}}.
  \end{aligned}
\end{equation*}
In order to lie in the stable/unstable manifold, $C_{0}$ and $C_{k}$ should vanish. We explain these maps with a bit more detail: by definition of Morse-type, $P$ has distinguished linear charts around its critical points (where the flow is diagonal and linear with no zero eigenvalues). This linear chart gives a decomposition:
\begin{equation*}
  \mathrm{pr}_{U}\times \mathrm{pr}_{S}:\mathrm{Nbd}(x)\to U_{x}\times S_{x},
\end{equation*}
where $U_{x}$ is the direct sum of the negative eigenspaces and $S_{x}$ is the direct sum of the positive eigenvalues. By differentiating:
\begin{equation*}
  \mathrm{pr}_{S}\circ \phi_{b_{k},\tau}^{\pi}(p_{k,+})\in S_{x_{+}},
\end{equation*}
for $\tau$ sufficiently large, one obtains a map $C_{k}$ which is well-defined up to composition with an isomorphism. In particular, the kernel of $C_{k}$ is well-defined independently of $\tau$. The kernel of $C_{0}$ is defined similarly.

The variation $(\delta b,\delta \pi, \delta p)$ lies in $\mathscr{I}$ provided:
\begin{enumerate}
\item $\delta p_{i+1,-}=C_{i}(\delta b, \delta \pi,\delta p_{i,+})$, and,
\item $C_{0}(\delta b,\delta \pi, \delta p_{1,-})=0$ and $C_{k}(\delta b,\delta \pi,\delta p_{k,+})=0$.
\end{enumerate}
A moment's thought reveals that $\mathscr{I}$ is cut out transversally and has codimension: $$n(k-1)+\dim TU_{x_{-}}+\dim TS_{x_{+}}.$$
It then follows that the restriction $D_{w}=AD_{w}|_{T_{w}}$ where:
\begin{equation*}
  T_{w}=W^{1,p,\delta}(u_{1}^{*}TW)\oplus \dots \oplus W^{1,p,\delta}(u_{k}^{*}TW)\oplus \mathscr{I}
\end{equation*}
is a Fredholm operator of index:
\begin{equation*}
  \mathrm{index}(D_{w})=\mu(w)+\dim TU_{x_{+}}-\dim TU_{x_{-}}+k+\dim T\mathscr{P}_{\pi}.
\end{equation*}
This leads to the definition of a regular solution:
\begin{definition}\label{definition:regular}
  A solution $w$ is regular if $\dim \ker (D_{w})=\mathrm{index}(D_{w})$.
\end{definition}

It is convenient to introduce $\mathrm{ind}(x)=\dim TU_{x}$ and the \emph{virtual dimension}:
\begin{equation}\label{eq:virtual-dimension-differential}
  \mathrm{vdim}(w)=\mu(w)+\mathrm{ind}(x_{+})-\mathrm{ind}(x_{-})+\dim T\mathscr{P}_{\pi}-1.
\end{equation}
This equals $\mathrm{index}(D_{w})-k-1$, and is the expected dimension of $[w]$ in the quotient by the $\R^{k+1}$ action which acts by translations on the pearls (this gives an $\R^{k}$ action) and by simultaneous translation of $b\in \mathscr{G}$ and $\pi\in \mathscr{P}$ (this gives an additional $\R$ action).

\subsubsection{Nodal curves}
\label{sec:nodal-curv}

Unfortunately, to prove finiteness of the counts appearing in $d_{eq}$, and to prove $d_{eq}^{2}=0$ in the standard way, it will be necessary to enforce additional transversality requirements for certain nodal configuations. These curves can be considered as lying above the compactification of $\mathscr{G}_{k}$.

Given a surjective non-decreasing map $f:\set{1,\dots,k}\to \set{1,\dots,\ell}$, consider solutions $w=(b,\pi,u_{1},\dots,u_{k})$ of the \emph{nodal equation}:
\begin{equation*}
  \left\{
    \begin{aligned}
      &u_{i}:\R\times [0,1]\to (W,L),\\
      &\bd_{s}u_{i}+J_{\pi(b_{f(i)})}\bd_{t}u_{i}=0,\\
      &0<\omega(u_{i})<\infty,
    \end{aligned}
  \right.
\end{equation*}
subject to the incidence equation:
\begin{equation*}
  \phi^{\pi}_{b_{f(i)},b_{f(i+1)}}(u_{i}(+\infty))=u_{i+1}(-\infty),
\end{equation*}
where the flow $\phi$ is defined above. This incidence forces $u_{i}(+\infty)=u_{i+1}(-\infty)$ whenever $f(i)=f(i+1)$. See Figure \ref{fig:interface-curve} for an illustration.

Similarly to \S\ref{sec:linearized-operators}, such curves also have a linearized operator $D_{w}$ defined on a space $T_{w}=W^{1,p,\delta}\oplus \mathscr{I}$ of coincident variations with index:
\begin{equation*}
  \mathrm{index}(w)=\mu(w)+\mathrm{ind}(x_{+})-\mathrm{ind}(x_{-})+\ell+\dim \mathscr{P}.
\end{equation*}
This linearized operator has a finite dimensional kernel (consisting of smooth coincident variations).
One can quotient the kernel by an $\R^{k+1}$ action; the expected dimension of this quotient space equals the \emph{virtual dimension}:
\begin{equation}\label{eq:nodal-vdem}
  \mathrm{vdim}(w)=\mu(w)+\mathrm{ind}(x_{+})-\mathrm{ind}(x_{-})+\dim \mathscr{P}-1-\mathfrak{n},
\end{equation}
(where $\mathfrak{n}=k-\ell$ is the number of nodes). Then:
\begin{definition}\label{definition:regular-b-data}
  Borel data $(P_{\eta},J_{\eta})$ is called regular if $D_{w}$ is surjective for each (nodal) solution $w$ satisfying $\mathrm{vdim}(w)\le -\mathfrak{n}$,  and if $(P,J)$ is regular in the sense of \S\ref{sec:maslov-2-disks}. This includes the case of non-nodal solutions with $\mathfrak{n}=0$.
\end{definition}

For regular data, the moduli spaces of \emph{free} orbits of the $\R^{k+1}$-action with $\mathrm{vdim}=i$, have the structure of a smooth $i$-dimensional manifold, for $i\le 1$; in particular, \emph{non-constant} solutions with negative virtual dimension do not exist. However, the constant solutions $k=0$, $j=0$, $i=0$ do exist in the $\mathrm{vdim}=-1$ component (these are not free orbits of the $\R^{k+1}=\R^{1}$ action).

As explained in \S\ref{sec:defin-diff}, the solutions with $\mathrm{vdim}=\mathfrak{n}=0$ contribute to the definition of the Floer differential. For regular Borel data, the counts converge: by monotonicity, one has a priori upper bounds on $\dim \mathscr{P}$ and $\mu(w)$ and so compactness of the space of orbits with $\mathrm{vdim}(w)=0$ follows from the non-existence of non-constant solutions with negative virtual dimension (and also the non-existence of nodal solutions with negative virtual dimension).

\begin{definition}\label{definition:very-regular-b-data}
  We say that $(P_{\eta},J_{\eta})$ is very regular Borel data if it is regular and if additionally all solutions $w$ with $\mathrm{vdim}(w)\le 1-\mathfrak{n}$ are regular. This includes the nodal and non-nodal solutions.
\end{definition}

\begin{theorem}\label{theorem:regular-b-data}
  Given any set-up $(W,L,a)$ as above, there exists very regular Borel data $(P_{\eta},J_{\eta})$ in the sense of Definition \ref{definition:very-regular-b-data}, and such data can be chosen to be arbitrarily close to any chosen Borel data (i.e., very regular data is generic). If the involution $a$ acts freely on $W$, then one can take $(P_{\eta},J_{\eta})$ to be independent of $\eta$. In particular, (very) regular data for the non-equivariant quantum cohomology (see Remark \ref{remark:non-eq-regular}) also exists.
\end{theorem}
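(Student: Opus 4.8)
The plan is to obtain very regular Borel data by a standard Sard--Smale argument applied to a universal moduli space, with the one essential subtlety being the equivariance constraints \ref{item:eq-data-3}--\ref{item:eq-data-5}, which cut out a constrained configuration space of data. First I would fix a reference pair $(P_0,J_0)$ near the chosen Borel data and introduce the Banach manifold $\mathscr{D}$ of all families $(P_\eta,J_\eta)_{\eta\in S^\infty}$ satisfying \ref{item:eq-data-1}--\ref{item:eq-data-5} and agreeing with $(P_0,J_0)$ outside a precompact neighbourhood $U$ of $L$; since these constraints are linear (in the appropriate local models) one checks $\mathscr{D}$ is itself a Banach manifold, and its tangent space consists of infinitesimal variations supported away from the poles $\set{\eta_j\geq 4/5}$, invariant under $\tau$, and anti-equivariant under $\eta\mapsto -\eta$. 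Because the poles $v_{j,\pm}$ are the only places where the data is pinned to $(P,J)$, and the $\tau$- and antipode-orbits of a generic interior point $\eta$ are infinite and disjoint from the poles, there is ``enough room'' to perturb $(P_\eta,J_\eta)$ near any interior point $\pi(b_\nu)$ appearing in a pearl trajectory; this is the input needed for the universal transversality argument.

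Next I would, for each surjective non-decreasing $f:\set{1,\dots,k}\to\set{1,\dots,\ell}$, each Maslov total $Ni$, each pair of critical points $x_\pm$, and each stratum $\mathscr{P}^{j,\pm}$, form the universal moduli space $\widetilde{\mathscr{M}}$ of pairs $(w,(P_\eta,J_\eta))$ with $(P_\eta,J_\eta)\in\mathscr{D}$ solving the nodal equation, and show the universal linearized operator is surjective. The key point, exactly as in \cite{mcduff-salamon-book-2012,biran-cornea-arXiv-2007} and following the injectivity discussion of Lemma \ref{lemma:xversality-maslov-2}, is that every non-constant pearl $u_\nu$ has an injective point whose image lies in $U$ (using Lazzarini's decomposition \cite{lazzarini-1,lazzarini-2} for the Maslov $2$ disks and Micallef--White \cite{micallef-white} for Chern number $1$ spheres, and minimal Maslov $\geq 2$ in general), so that varying $J_\eta$ near the value $\eta=\pi(b_{f(\nu)})$ of the sphere parameter already surjects onto the cokernel of the Cauchy--Riemann part; the incidence/flow part is handled by varying $P_\eta$ near the relevant interior points, which is where the transversality of $\mathscr{I}$ and the ``enough room'' observation enter. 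Projecting $\widetilde{\mathscr{M}}$ to $\mathscr{D}$, the Sard--Smale theorem produces a comeager set of $(P_\eta,J_\eta)$ for which $D_w$ is surjective for all $w$ with $\mathrm{vdim}(w)\leq 1-\mathfrak{n}$; intersecting over the countably many $(f,i,x_\pm,j,\pm)$ and the countably many sublevels $S^n\subset S^\infty$ gives a comeager, hence dense, set of very regular data, proving genericity and the ``arbitrarily close'' clause. The $C^\infty$ versus $C^\ell$ issue is dealt with by the usual Taubes trick of exhausting $S^\infty$ and $L$ by compacts.

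For the free-action clause, I would observe that when $a$ acts freely one may simply take $(P_\eta,J_\eta)=(P,J)$ constant in $\eta$, provided $(P,J)$ is already regular downstairs; since $S^\infty$-constant data trivially satisfies \ref{item:eq-data-3}--\ref{item:eq-data-5}, and for such data the linearized operator $D_w$ of a pearl trajectory splits off the extra factor $T\mathscr{P}_\pi$ with no coupling (the $J_{\pi(b_\nu)}$ all equal $J$), so $D_w$ is surjective as soon as the underlying non-equivariant pearl operator is, and the preceding genericity statement applied to the non-equivariant theory supplies such $(P,J)$ — which is the last sentence of the theorem. Finally, specializing the whole argument to the trivial involution $a=\id$ (equivalently ignoring \ref{item:eq-data-3}) recovers the existence of regular data for non-equivariant quantum cohomology referenced in Remark \ref{remark:non-eq-regular}.

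The main obstacle I anticipate is verifying universal transversality \emph{compatibly with the constraints} \ref{item:eq-data-3}--\ref{item:eq-data-5}: one must be sure that at every interior value of the sphere parameter $\eta$ that actually occurs along a rigid (or $\mathrm{vdim}\leq 1-\mathfrak n$) trajectory, the antipodal and $\tau$-shift orbits of $\eta$ do not collide with the poles or with each other in a way that would force an unwanted symmetry on the perturbation and kill the needed freedom — and, relatedly, to treat the disks with $\mu=2$ and spheres with $c_1=1$ (where generic $J$ alone, not a generic path, must already achieve regularity, cf. Definition \ref{definition:regular-b-data}) with the care indicated in \S\ref{sec:maslov-2-disks}.
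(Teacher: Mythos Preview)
Your Sard--Smale framework is the right scaffold and matches the paper's, but the universal surjectivity step has a genuine gap. You assert that ``every non-constant pearl $u_\nu$ has an injective point whose image lies in $U$'', invoking Lazzarini and Micallef--White. This is false in general: a pearl can be a multiple cover (e.g.\ a double cover of a Maslov-$2$ disk, total Maslov $4$), and such a map has no injective points whatsoever---Lazzarini's theorem decomposes it into simple pieces but does not furnish injective points on the original map. More subtly, even when each pearl is individually simple, several pearls $u_{i_1},\dots,u_{i_m}$ whose sphere parameters $\pi(b_{i_r})$ all lie in the frozen region $\{\eta_j\geq 4/5\}$ near a pole are governed by the \emph{same} almost complex structure $J$; if their images overlap (or one is a reparametrization of another) a single perturbation of $J$ cannot independently kill their cokernels. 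Your proposed variation of $J_\eta$ at interior sphere parameters does not help here, since near the poles the data is pinned by \ref{item:eq-data-4}; and for the entire $j=0$ component (where $\pi$ is the constant flow line at $v_{0,+}$) every pearl is at the pole.

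The paper's missing ingredient is a reduction from arbitrary to \emph{simple} trajectories. One calls $w$ simple if the union of pearls near the starting pole is a simple curve, likewise near the ending pole, and each pearl at an interior sphere parameter is individually simple. For simple trajectories the universal argument works (the frozen $J$ at the poles can be varied, and simplicity of the union gives enough injective points). The key extra lemma, following \cite[\S3.2]{biran-cornea-arXiv-2007} and \cite{lazzarini-1,lazzarini-2}, is that from any non-simple trajectory $w$ with $\mathrm{vdim}(w)=d$ one can extract an underlying simple trajectory $w'$ with the same asymptotics and $\mathrm{vdim}(w')\leq d-2$: one shatters the overlapping or multiply-covered pearls via Lazzarini's graph decomposition and takes a ``short-cut'' chain of simple pieces, so the Maslov number drops by at least $2$. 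Since for generic $J$ simple trajectories with negative virtual dimension do not exist, every trajectory with $\mathrm{vdim}\leq 1$ is forced to be simple, hence regular. This reduction is what your proposal is missing.
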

We postpone the proof to \S\ref{sec:proof-theorem-regular-b-data}; the argument is based on the one in \cite{biran-cornea-arXiv-2007}.

\begin{remark}\label{remark:non-eq-regular}
  It will be important to also mention when data $(P,J)$ for defining the \emph{non-equivariant} quantum cohomology is (very) regular. As a shortcut, we adopt the following definition: we consider $(P,J)$ as Borel data on the space $W\sqcup W$ where the involution interchanges the two factors (in this case $(P_{\eta},J_{\eta})$ is equal to $(P,J)$ on each factor and is independent of $\eta$). If this trivial extension is (very) regular Borel data, then we say $(P,J)$ is \emph{(very) regular for defining the non-equivariant quantum homology}.
\end{remark}

\begin{figure}[h]
  \centering
  \begin{tikzpicture}
    \begin{scope}
      \draw (0,0)--(3,0);
      \draw[fill=white] (1,0) circle (0.3) (2,0) circle (0.3);
    \end{scope}
    \begin{scope}[shift={(4,0)}]
      \draw (0,0)--(3,0);
      \draw[fill=white] (1.2,0) circle (0.3) (1.8,0) circle (0.3);
    \end{scope}
    \begin{scope}[shift={(8,0)}]
      \draw (0,0)--(3,0);
      \draw[fill=white] (1.5,0) circle (0.5);
    \end{scope}
  \end{tikzpicture}
  \caption{Nodal curve lying at the interface of two moduli spaces of non-nodal curves.}
  \label{fig:interface-curve}
\end{figure}
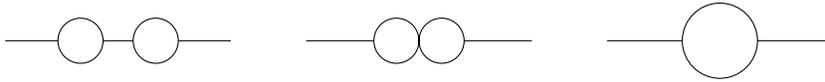

We continue to discuss the role of nodal curves in the proof that $d_{eq}^{2}=0$. The ideas are essentially the same as those used in \cite{cornea-lalonde-arXiv-2005,biran-cornea-arXiv-2007,biran-cornea-GT-2009}. The set of curves solving the nodal equation can be considered as compactifying the one-dimensional components in the moduli spaces of non-nodal curves, in a loose sense: a sequence of curves $w_{n}$ lying above $\mathscr{G}_{k}$ can fail to converge because $b_{n}$ diverges in $\mathscr{G}_{k}$ because some markers collide, say, $\lim b_{n,i}=b_{f(i)}$ for some map $f$ satisfying the requirements of the nodal equation. Ignoring for the moment other possible degenerations, $w_{n}$ would then converge to a nodal curve with $k>\ell$. By inspection, $\mathrm{vdim}(w)=\mathrm{vdim}(w_{n})-(k-\ell)$. Thus the virtual dimension drops by the number of nodal interfaces created.

There is another way to create nodal interfaces: one can have a sequence of curves $w_{n}$ which fails to converge because some of the $u_{n,i}$ break into a chain of holomorphic disks. Again ignoring the other failures of compactness, one concludes that $w_{n}$ converges to a nodal curve $w$ with $k>\ell$, and we still have $\mathrm{vdim}(w)=\mathrm{vdim}(w_{n})-(k-\ell)$.

As we explained above, the equivariant differential $d_{eq}$ is defined by counting non-nodal solutions with $\mathrm{vdim}(w)=0$; all of these solutions are non-nodal. To prove $d_{eq}$ squares to zero one needs to consider:
\begin{itemize}
\item $\mathscr{M}_{1}$ the non-nodal pearl trajectories with $\mathrm{vdim}(w)=1$,
\item $\mathscr{M}_{0,\mathrm{node}}$ nodal curves with $k=\ell+1$ (i.e., only one node) and $\mathrm{vdim}(w)=0$.
\end{itemize}
The idea is to glue together different components of $\mathscr{M}_{1}$ along $\mathscr{M}_{0,\mathrm{node}}$.

\subsubsection{Fibre product curves}
\label{sec:fibre-prod-curves}

To complete the proof that $d_{eq}^{2}=0$, we consider one final configuration of pearl trajectories. A \emph{fibre product} curve is a tuple of nodal solutions $w=(w_{1},\dots,w_{m})$, subject to the following conditions:
\begin{enumerate}
\item $x_{+}(w_{i})=x_{-}(w_{i+1})$, \emph{(in the twisted sense defined above)},
\item $\mathrm{vdim}(w_{i})\ge 0$.
\end{enumerate}
Set $x_{+}(w)=x_{+}(w_{m})$ and $x_{-}(w)=x_{-}(w_{1})$. We define the virtual dimension of a fibre product curve as the sum of $\mathrm{vdim}(w_{i})$. Let us call the interface between $w_{i}$ and $w_{i+1}$ a \emph{Morse breaking}.

Denote by $\mathscr{M}_{\mathrm{break}}(x,y)$ the collection of fibre product curves with $x_{-}(w)=x$ and $x_{+}(w)=y$, only one Morse breaking, and virtual dimension $0$. These appear as ends of the 1-dimensional component $\mathscr{M}_{1}(x,y)$.

\subsubsection{Proof of Theorem \ref{theorem:diff-structural}}
\label{sec:proof-theorem-diff-structural}

First we prove it for very regular data following the ideas of \cite{biran-cornea-arXiv-2007}, and the ideas of \cite{seidel-smith-GAFA-2010,seidel-GAFA-2015} for the $\pi\in \mathscr{P}$ component; one shows:
\begin{equation*}
  \# \bd \mathscr{M}_{1}(x,y)+2\# \mathscr{M}_{\mathrm{node}}(x,y)+\#\mathscr{M}_{\mathrm{break}}(x,y)=0\text{ mod }2
\end{equation*}
because each element of $\mathscr{M}_{\mathrm{node}}(x,y)$ can be glued into two different ways to produce ends of moduli spaces in $\mathscr{M}_{1}$, while each Morse breaking can only be glued in one way. Moreover, the count of $\mathscr{M}_{\mathrm{break}}$ represents the coefficients appearing in $d^{2}_{eq}$, and one thereby concludes that $d^{2}_{eq}=0$ modulo two.

The trick to upgrade from very regular (Definition \ref{definition:very-regular-b-data}) to just regular (Definition \ref{definition:regular}) is the following: one uses that regular data can be approximated by a converging sequence of very regular data (by Theorem \ref{theorem:regular-b-data}). Then, because we begin with regular data, the counts used in the definition of $d_{eq}$ stabilize in this approximating sequence. Thus one can replace the regular data by very regular data without changing $d_{eq}$, and the desired result follows. For a similar argument, see \cite[\S6.3]{biran-khanevsky-CMH-2013}. \hfill$\square$

In the remaining sections \S\ref{sec:gradient-bound}, \S\ref{sec:gradient-bound-exceptional}, \S\ref{sec:grom-floer-comp}, and \S\ref{sec:gluing} we comment on some of the technical points of the compactness and gluing results implicitly used in the above proof. The reader who is comfortable with the above proof that $d_{eq}^{2}=0$ is encouraged to continue to \S\ref{sec:cont-isom}.

\subsubsection{Gradient bound}
\label{sec:gradient-bound}

Consider a sequence of solutions $w_{n}$ for regular Borel data satisfying $\mathrm{vdim}(w_{n})\le 1$. Then:
\begin{lemma}[A priori $C^{1}$ bound]
  If $x_{-}(w_{n})\ne x_{+}(w_{n})$ and $\mathrm{vdim}(w_{n})\le 1$, then the first derivatives of each pearl $u_{n,i}$ are uniformly bounded.
\end{lemma}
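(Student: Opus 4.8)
The plan is the usual bubbling dichotomy run in reverse: assuming the first derivatives blow up along the sequence, I extract a non-constant holomorphic sphere or disk from a Gromov limit and argue that it forces a non-constant configuration of negative virtual dimension, which regular Borel data does not admit.

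\emph{Step 1: an a priori energy bound.} We may assume $k\ge 1$, as otherwise there are no pearls. By monotonicity every pearl satisfies $\omega(u_{n,i})=\lambda\,\mu(u_{n,i})$ for the monotonicity constant $\lambda>0$, and a non-constant pearl has $\mu(u_{n,i})\ge N$; hence $\mu(w_{n})=\sum_{i}\mu(u_{n,i})\ge kN$, so $k\le\mu(w_{n})/N$ and $\mathfrak{n}=k-\ell<k\le\mu(w_{n})/N$. Substituting this, together with $\dim\mathscr{P}\ge 0$, $\mathrm{ind}(x_{+})\ge 0$, and $\mathrm{ind}(x_{-})\le\dim L$, into the hypothesis $\mathrm{vdim}(w_{n})=\mu(w_{n})+\mathrm{ind}(x_{+})-\mathrm{ind}(x_{-})+\dim\mathscr{P}-1-\mathfrak{n}\le 1$ gives $(1-1/N)\,\mu(w_{n})<\mu(w_{n})-\mathfrak{n}\le 2+\dim L$, whence $\mu(w_{n})\le 2(\dim L+2)$ because $N\ge 2$. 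In particular the total energy $\sum_{i}\omega(u_{n,i})$, the number of pearls $k$, the node count $\mathfrak{n}$, and $\dim\mathscr{P}$ are all bounded along the sequence, so $\pi_{n}$ ranges over finitely many of the manifolds $\mathscr{P}^{j,\pm}$.

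\emph{Step 2: rescaling.} Suppose, after passing to a subsequence, that $\sup|\nabla u_{n,i}|\to\infty$ for some fixed index $i$. Each $u_{n,i}$ is genuinely $J_{\pi_{n}(b_{n,i})}$-holomorphic for a single almost complex structure; after normalizing the $\R$-translation of $\pi_{n}$ and passing to a further subsequence, $\pi_{n}(b_{n,i})$ either converges in $S^{\infty}$ or exits every compact set towards a pole, where the data equals the fixed $(P,J)$, so in either case $J_{\pi_{n}(b_{n,i})}\to J_{\infty}$ in $C^{\infty}$. The standard rescaling argument at points of maximal gradient then produces in the limit a non-constant $J_{\infty}$-holomorphic sphere $v$ in $W$ (if the rescaling points stay away from $\R\times\{0,1\}$) or a non-constant $J_{\infty}$-holomorphic disk $v$ with boundary on $L$; by Step 1 and monotonicity, $2c_{1}(v)\ge 2$ in the sphere case and $\mu(v)\ge N\ge 2$ in the disk case.

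\emph{Step 3: deriving the contradiction.} This is the step I expect to require the most care. By Gromov--Floer compactness the bubble $v$ is a component of a limiting stable configuration whose principal part is a possibly nodal pearl trajectory $w_{\infty}$; since the fixed endpoints satisfy $x_{-}\ne x_{+}$, this principal part is not the trivial constant solution, so the non-existence of non-constant negative-virtual-dimension solutions for regular Borel data applies to it. The content is then a bookkeeping of virtual dimensions showing the bubble costs at least $2$: a Chern-$1$ sphere bubble is codimension $2$ (a Chern-$1$ sphere is simple by \cite{micallef-white}, hence regular for generic data by Lemma~\ref{lemma:xversality-maslov-2}, and its attachment to the principal component is a codimension-$2$ incidence), and higher Chern numbers cost more; a disk bubble of Maslov $\mu(v)$ is absorbed in the pearl formalism by creating an extra, zero-length Morse nodal interface, which drops the virtual dimension by $\mu(v)+1\ge 3$ --- the borderline case $\mu(v)=2$, possible only when $N=2$, being exactly the one controlled by the special regularity of Lemma~\ref{lemma:xversality-maslov-2} via the decomposition of holomorphic disks into simple ones of \cite{lazzarini-1,lazzarini-2}. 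In all cases $\mathrm{vdim}(w_{\infty})\le\mathrm{vdim}(w_{n})-2\le -1$, contradicting regularity of the Borel data. Hence $\sup|\nabla u_{n,i}|$ is bounded for every $i$, as claimed.
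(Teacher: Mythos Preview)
Your approach is the same as the paper's: assume bubbling, pass to a limiting underlying (principal) pearl trajectory $w_{\infty}$, show its virtual dimension has dropped by at least $2$, and invoke regularity to force $w_{\infty}$ to be the constant $k=0$ solution, contradicting $x_{-}\ne x_{+}$. The paper records this in one line as
\[
  \mathrm{vdim}(w_{\infty})=\mathrm{vdim}(w_{n})-\mu(\text{bubbles})-\#\{\text{additional nodes/breakings}\}\le -1.
\]

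One bookkeeping slip in your Step~3: a disk bubble forming at a \emph{finite} boundary point of a pearl does not create a nodal interface in the principal part. Removing a small neighbourhood of the bubble point leaves the strip domain connected, so the remaining pearl is a single (possibly constant) strip; the drop in virtual dimension is exactly $\mu(v)$, not $\mu(v)+1$. You are conflating bubbling (energy concentrating at a finite point --- this is what causes gradient blow-up) with Floer-type breaking at $\pm\infty$ (which \emph{does} produce a node in the nodal formalism of \S\ref{sec:nodal-curv} but does \emph{not} cause gradient blow-up). The phrase ``absorbed in the pearl formalism by creating an extra, zero-length Morse nodal interface'' describes the second phenomenon, not the first. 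Since $\mu(v)\ge N\ge 2$ regardless, your conclusion $\mathrm{vdim}(w_{\infty})\le -1$ survives, but the appeal to Lemma~\ref{lemma:xversality-maslov-2} for a ``borderline'' $\mu(v)=2$ case is unnecessary here; that special regularity is used only for the exceptional case $x_{-}=x_{+}$ treated separately in \S\ref{sec:gradient-bound-exceptional}.
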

\begin{proof}
  If not, then a disk bubble or sphere bubble forms. By standard compactness arguments, one concludes that $w_{n}$ converges to a (nodal, fibre product) underlying solution after removing small neighborhoods of the bubbling regions.

  The limiting underlying solution $w_{\infty}$ has virtual dimension:
  \begin{equation*}
    \mathrm{vdim}(w_{\infty})=\mathrm{vdim}(w_{n})-\mu(\mathrm{bubbles})-\#\set{\text{additional nodes/breakings}}.
  \end{equation*}
  Because the minimal Maslov number is at least $2$, we have $\mathrm{vdim}(w_{\infty})\le -1$. If $\mathrm{vdim}(w_{\infty})\le -1$, then $x_{-}=x_{+}$, which we exclude from the lemma. Thus there could not have been bubbling.
\end{proof}

\subsubsection{The exceptional case}
\label{sec:gradient-bound-exceptional}

Establishing compactness when $x_{-}=x_{+}$ is more subtle, but it only occurs in the moduli space with $\mu(w)=2$, $k=1$, and $\dim\mathscr{P}=0$ (and is therefore present in the non-equivariant differential). For this, we appeal to the regularity of $(P,J)$ in the sense of \S\ref{sec:maslov-2-disks}. The component of curves with $\mathrm{vdim}(w)=1$ joining $x$ to $x$ is:
\begin{equation*}
  \mathrm{ev}_{2}^{-1}(S_{x}\times U_{x})\setminus \set{z_{1}=z_{2}},
\end{equation*}
where $\mathrm{ev}_{2}$ is the evaluation map of \S\ref{sec:maslov-2-disks}. Thus we work instead with the compactification:
\begin{equation*}
  \Pi_{2}^{x}=\mathrm{ev}^{-1}(S_{x}\times U_{x}),
\end{equation*}
which is the 1-manifold studied in \S\ref{sec:maslov-2-disks}. In this manner, we see that the bubbling off of disks with $\mu=2$ in this exceptional component behaves as a sort of removable singularity; see also \cite{biran-cornea-GT-2012}.

Let us note that $\bd \Pi_{2}^{x}$ can be described explicitly and consists of fibre product curves; see \S\ref{sec:maslov-2-disks}. The count of these boundary points yields the relation:
\begin{equation*}
  \ip{x,d_{eq}^{2}(x)}=0.
\end{equation*}
The gluing result needed to characterize $\bd \Pi_{2}^{x}$ follows from standard transversality methods: the evaluation map $\mathrm{ev}$ is sufficiently submersive that one can make it transverse to $S_{x}\times U_{x}$, and its codimension $1$ boundary strata, and taking preimage of a submanifold with boundary yields a submanifold with boundary.

For the rest of the compactness argument, we will assume that $x_{+}\ne x_{-}$ in the 1-dimensional components.

\subsubsection{Gromov-Floer compactness}
\label{sec:grom-floer-comp}

We are concerned with the compactness of the moduli space quotiented by the $\R^{k+1}$ action. Because of the gradient bound, and because we work in a convex-at-infinity symplectic manifold, we can apply the Arzelà-Ascoli theorem to conclude Floer-type compactness along sequences $w_{n}$ in the component with $\mathrm{vdim}(w_{n})\le 1$. Because the pearls are defined on strips, each pearl can break into a chain of holomorphic strips (this is one way the nodal solutions of \S\ref{sec:nodal-curv} are formed); see Figure \ref{fig:interface-curve}.

One important subtlety introduced by the Borel construction is that the flow line $[\pi_{n}]\in \mathscr{P}/\R$ may diverge (and break in the Morse theory sense). In this case, $w_{n}$ will converge to a fibre product curve; $[\pi_{n}]$ converges in the usual Morse theoretic sense to a configuration of flow lines, but with the caveat that the flow lines appearing in the limit are all transported by self-similarity map $\tau$ and the involution so as to start at $v_{0,+}$ (this is merely a question of interpretation of the limit). This part of the story is treated carefully in \cite{seidel-smith-GAFA-2010,seidel-GAFA-2015}.

Similarly one can have divergence in $\mathscr{G}_{k}$ which corresponds to nodal degenerations (when two points collide), or fibre product curves (when points drift off to $\pm \infty$). Finally, one can always have breakings of Morse flow lines for $P$ at either end, even if $[(\pi_{n},b_{n})]\in (\mathscr{P}\times \mathscr{G}_{k})/\R$ converges. The analysis needed to ``capture'' all the components of the limit is fairly standard by now; we refer the reader to \cite{biran-cornea-arXiv-2007,seidel-smith-GAFA-2010,seidel-GAFA-2015,cant-chen-kyoto-2023} for further details.

\subsubsection{Gluing}
\label{sec:gluing}

There are three types of ``interfaces'' one needs to consider:
\begin{enumerate}
\item\label{item:gluing-1} node removal by gluing disks,
\item node removal by adding flow segments,
\item fibre product gluing.
\end{enumerate}
Each type of result is essentially a gluing result: one uses regularity of the solution $w$ to build ``pre-glued'' approximate solutions $\bar{w}_{R}$ depending on a parameter $R$. For $R$ sufficiently large, one proves the pre-glued solutions $\bar{w}_{R}$ is close to a unique one-parameter family of solutions $w_{R}$ diverging to $w$.

The overall strategy and techniques of gluing are by now standard; see, e.g., \cite{biran-cornea-arXiv-2007,schmaschke-dissertation-2016,brocic-cant-JFPTA-2024,brocic-cant-arXiv-2025}, and we defer to them for further details.

\subsection{Continuation isomorphisms}
\label{sec:cont-isom}

In this section, we explain the invariance of the equivariant quantum cohomology with respect to the choice of Borel data $(P_{\eta},J_{\eta})$. The main results are the following. First, one defines a category $\mathscr{D}$ whose objects are choices of regular Borel data $(P_{\eta},J_{\eta})$ and whose morphisms are homotopy classes of paths of Borel data with fixed endpoints. Then one shows $\mathscr{D}$ is an indiscrete groupoid (the choice of data is contractible). Finally one shows the assignement $(P_{\eta},J_{\eta})\mapsto \mathrm{QH}^{*}_{eq}(P_{\eta},J_{\eta})$ extends to a functor defined on $\mathscr{D}$, taking values in the category of modules over the algebra $\mathbf{k}[q^{-1},q]]\otimes \mathbf{k}[e]$. It follows that all of the quantum cohomologies for different choics of regular data are canonically isomorphic.

\subsubsection{Continuation data}
\label{sec:continuation-data}

We define \emph{Borel continuation data} to be a one-parameter family $P_{\eta,\sigma},J_{\eta,\sigma}$ of Borel data where $\sigma\in \R$. The data should be $\sigma$-independent outside of a finite interval $(-\sigma_{0},\sigma_{0})$. We allow one relaxation: when $\sigma\in (-\sigma_{0},\sigma_{0})$, $P_{\eta,\sigma}$ does not need to be a Morse pseudogradient when $\eta$ is at a pole, (it can be an arbitrary vector field). However, for $\sigma\not\in (-\sigma_{0},\sigma_{0})$, the Borel continuation data is required to be regular in the sense of Definition \ref{sec:linearized-operators}. Let us denote by $P_{\eta,\pm},J_{\eta,\pm}$ the limiting Borel data as $\sigma\to\pm\infty$, and by $P_{\pm},J_{\pm}$ the restrictions of the Borel data to the pole $v_{0,+}$. 

Homotopy classes of Borel continuation data with fixed endpoints are defined in the obvious way, and such homotopy classes can be concatenated, endowing $\mathscr{D}$ with the structure of a category. It is readily verified that for any two choices of regular Borel data, there is a unique homotopy class of continuation data between them, (since the space of almost complex structures and vector fields on $L$ is contractible); thus $\mathscr{D}$ is an indiscrete groupoid.

\subsubsection{Moduli space of continuation trajectories}
\label{sec:moduli-space-cont}

Consider the moduli space of solutions $w=(b,\pi,u_{1},\dots,u_{k})$ to the problem:
\begin{equation*}
  \left\{
    \begin{aligned}
      &b=(b_{1},\dots,b_{k})\in \mathscr{G}_{k}\text{ and }\pi\in \mathscr{P}^{j,\pm},\\
      &\bd_{s}u_{\nu}+J_{\pi(b_{\nu}),b_{\nu}}(u_{\nu})\bd_{t}u_{\nu}=0,\\
      &0<\omega(u_{\nu})<\infty.
    \end{aligned}
  \right.
\end{equation*}
subject to the incidence condition that the flow line of $P_{\pi(s),s}$ for $s\in [b_{\nu},b_{\nu+1}]$ joins $u_{\nu}(+\infty)$ to $u_{\nu+1}(-\infty)$; the asymptotics $x_{-}(w)$ and $x_{+}(w)$ are as in the definition of the equivariant differential. In the case $\pi^{j,-}$, the $x_{+}(w)$ asymptotic is the twist by $a$ of the underlying geometric asymptotic zero; this is necessary so that $x_{\pm}(w)$ are zeros of $P_{\pm}$.

We denote by $\mathscr{M}_{\mathrm{cont}}(x,y)$ the moduli space of such continuation trajectories with $x_{-}(w)=x$ and $x_{+}(w)=y$. Similarly to \S\ref{sec:linearized-operators}, each $w$ determines a certain linearized operator $D_{w}$ defined on:
\begin{equation*}
  W^{1,p,\delta}(u_{1}^{*}TW,u_{1}^{*}TL)\oplus \cdots \oplus W^{1,p,\delta}(u_{k}^{*}TW,u_{k}^{*}TL)\oplus \mathscr{I}_{w},
\end{equation*}
where $\mathscr{I}_{w}$ is a certain subspace of $TL^{2k}\oplus T\mathscr{P}\oplus T\mathscr{G}_{k}$, and $D_{w}$ is a Fredholm operator of index:
\begin{equation*}
  \mathrm{index}(D_{w})=\mu(u_{1})+\dots+\mu(u_{k})+\dim \mathscr{P}+\mathrm{ind}(x_{+})-\mathrm{ind}(x_{-})+k.
\end{equation*}
We say that $w$ is \emph{regular} provided that $D_{w}$ is surjective. Due to the free action by $\R^{k}$, it is convenient to the introduce the virtual dimension as $\mathrm{vdim}(w)=\mathrm{index}(D_{w})-k$.

One defines nodal continuation trajectories in the obvious way, combining the definition in \S\ref{sec:nodal-curv} with the above definition of continuation trajectories. Nodal solutions also have a virtual dimension as in \S\ref{sec:nodal-curv}, the general formula for the virtual dimension is:
\begin{equation*}
  \mathrm{vdim}(w)=\mu(w)+\dim \mathscr{P}+\mathrm{ind}(x_{+})-\mathrm{ind}(x_{-})-\mathfrak{n},
\end{equation*}
where $\mathfrak{n}$ is the number of nodes. Emulating the definition of regular and very regular used in \S\ref{sec:defin-diff}, we have:
\begin{definition}
  Continuation data $(P_{\eta,\sigma},J_{\eta,\sigma})$ is regular provided all (nodal) continuation trajectories with $\mathrm{vdim}(w)\le -\mathfrak{n}$ are regular, and $P_{\eta,\pm},J_{\eta,\pm}$ are regular in the sense of Definition \ref{definition:regular-b-data}. Similarly, continuation data is called very regular provided it is regular and additionally all (nodal) continuation trajectories with $\mathrm{vdim}(w)\le 1-\mathfrak{n}$ are regular.
\end{definition}
In \S\ref{sec:very-regular-continuation-deformation-data-exists} we explain why very regular continuation data is generic, in a manner analogous to Theorem \ref{theorem:regular-b-data}.

\subsubsection{Continuation map}
\label{sec:continuation-map}

The continuation map associated to regular continuation data is defined by appropriately counting the non-nodal continuation trajectories with $\mathrm{vdim}(w)=0$. To be precise, we define:
\begin{equation*}
  \mathfrak{c}(x_{-}):=\sum q^{\mu(w)}e^{\dim\mathscr{P}}x_{+}(w);
\end{equation*}
the sum is over $\R^{k}$-orbits of continuation solutions $w=(b;\pi;u_{1},\dots,u_{k})$ with $\mathrm{vdim}(w)=0$, $\mathfrak{n}=0$, and $x_{-}(w)=x_{-}$. Here $\mu(w)$ is the total Maslov number of the pearls, and $\mathscr{P}$ is the space of flow lines containing $\pi$.

Modulo issues of convergence of sums, this defines a map:
\begin{equation}\label{eq:cmap}
  \mathfrak{c}:\mathrm{CM}(P_{-})\to \mathrm{CM}(P_{+})\otimes \mathbf{k}[q]\otimes \mathbf{k}[e],
\end{equation}
which is extended to a map of $\mathbf{k}[q^{-1},q]]\otimes \mathbf{k}[e]$-modules.

\begin{lemma}
  The map written as \eqref{eq:cmap} is well-defined (the sum converges).
\end{lemma}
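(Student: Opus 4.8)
The plan is to show that for each fixed generator $x_{-}$, the sum $\mathfrak{c}(x_{-}) = \sum q^{\mu(w)} e^{\dim\mathscr{P}} x_{+}(w)$ has only finitely many nonzero coefficients in each total degree, which is exactly what is needed for the sum to land in $\mathrm{CM}(P_{+})\otimes \mathbf{k}[q]\otimes \mathbf{k}[e]$ and thereby extend to a map of $\mathbf{k}[q^{-1},q]]\otimes \mathbf{k}[e]$-modules. By the dimension formula, a continuation trajectory $w$ with $\mathrm{vdim}(w)=0$, $\mathfrak{n}=0$, asymptotics $x_{-},x_{+}$, total Maslov number $\mu(w)=Ni$, and $\pi$ living in a space $\mathscr{P}$ of dimension $j$ satisfies $Ni + j + \mathrm{ind}(x_{+})-\mathrm{ind}(x_{-}) = 0$. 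Since $\mathrm{ind}(x_{\pm})$ range over the finite set of Morse indices of zeros of $P_{\pm}$, fixing the contribution $q^{i}e^{j}x_{+}$ pins down $i,j$ and $x_{+}$; so the content of the lemma is that, for each such fixed data, the number of $\R^{k}$-orbits of continuation trajectories (summed over all $k\ge 0$) is finite.

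First I would establish the a priori bounds that make this a compactness statement rather than an infinitude. Monotonicity of $L$ gives $\omega(u_{\nu}) = \lambda \mu(u_{\nu})$ for each pearl, so the total energy of the pearls is $\lambda \mu(w) = \lambda N i$, which is bounded once $i$ is fixed. Positivity of Maslov (minimal Maslov number $\ge 2$) then bounds the number of nonconstant pearls; combined with the fact that $\mathrm{vdim}(w)=0$ and the index formula, this bounds $k$ (the number of pearls, including ghost/constant ones is controlled once we quotient by $\R^{k}$, noting constant pearls with no bubbling are excluded in positive-dimension moduli — one uses here that nonconstant solutions of negative virtual dimension do not exist, as recorded after Definition \ref{definition:regular-b-data}). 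Likewise $j = \dim\mathscr{P}$ is bounded. Thus the union over $k$ of the relevant moduli spaces $\mathscr{M}_{\mathrm{cont}}(x_{-},x_{+})$, restricted to $\mathrm{vdim}=0$, $\mathfrak{n}=0$, fixed $i,j$, is a finite union of zero-dimensional moduli spaces.

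Next I would invoke Gromov--Floer compactness for continuation trajectories, in the form already discussed in \S\ref{sec:grom-floer-comp}: the energy and first-derivative bounds (the latter from the a priori $C^{1}$ bound once $x_{-}\ne x_{+}$, and from the exceptional-case analysis of \S\ref{sec:gradient-bound-exceptional} otherwise) imply that any sequence of such trajectories subconverges to a limiting broken/nodal/fibre-product continuation configuration. For regular continuation data, any such limit with strictly lower virtual dimension at a stratum would force a nonconstant piece of negative virtual dimension, which does not exist; hence the limiting configuration is again a single non-nodal trajectory in the same zero-dimensional moduli space. Therefore each $\mathscr{M}_{\mathrm{cont}}(x_{-},x_{+})$ with $\mathrm{vdim}=0$, $\mathfrak n = 0$, and fixed $i,j$ is compact and, being zero-dimensional, finite. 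Summing over the finitely many admissible $(i,j,x_{+})$ in each total degree gives that each degree-component of $\mathfrak{c}(x_{-})$ is a finite sum, so $\mathfrak{c}(x_{-})$ lies in $\mathrm{CM}(P_{+})\otimes\mathbf{k}[q]\otimes\mathbf{k}[e]$; extending $q^{-1},q,e$-linearly then yields the asserted module map.

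The main obstacle is the compactness input in the presence of the noncompact parameter spaces $\mathscr{P}$ and $\mathscr{G}_{k}$: one must rule out that a minimizing/enumerating sequence escapes by having the Morse flow line $[\pi_{n}]\in\mathscr{P}/\R$ break, or by having markers in $\mathscr{G}_{k}$ collide or drift to $\pm\infty$, in a way that would keep producing new trajectories indefinitely. The point is that all of these degenerations strictly lower the number of nodes or the number of remaining pearls, or strictly drop $\dim\mathscr{P}$, and each limiting stratum is again cut out by a regular (hence, by the index count, empty in negative virtual dimension) problem; so the space of limits is itself finite and no new noncompact directions appear. This is precisely the analysis carried out in \cite{seidel-smith-GAFA-2010,seidel-GAFA-2015} for the $\mathscr{P}$-direction and in \cite{biran-cornea-arXiv-2007} for the pearl/$\mathscr{G}_{k}$-direction, and I would cite those for the technical details rather than reproduce them.
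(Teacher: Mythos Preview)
Your proposal is correct and follows essentially the same approach as the paper: bound $\mu(w)$, $\dim\mathscr{P}$, and $k$ a priori from $\mathrm{vdim}(w)=0$ together with positivity of the Maslov number of each holomorphic pearl, then invoke Gromov--Floer compactness to conclude that a diverging sequence would produce a continuation trajectory of negative virtual dimension, contradicting regularity. The paper's own proof is terser and organizes the bounds slightly differently (bounding $k$, $\dim\mathscr{P}$, $\mu$ uniformly rather than first fixing $(i,j,x_{+})$), but the substance is the same.
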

\begin{proof}
  Because the sum is only over solutions $w$ with $\mathrm{vdim}(w)=0$, there is a uniform a priori upper bound on $k$ (the number of pearls), $\dim \mathscr{P}$, and $\mu(w)$. Here it is important that the pearls appearing in $w$ are holomorphic (for some $\omega$-tame almost complex structure) so that the Maslov number of each pearl is strictly positive. Thus, in order to prove convergence, it suffices to prove the number of $\R^{k}$-orbits of solutions with fixed $k$, $\mathscr{P}$, and $\mathrm{vdim}(w)=0$ is finite. This is then a standard Gromov-Floer compactness argument; if there was a diverging sequence, one would necessarily conclude a continuation trajectory with negative virtual dimension, contradicting regularity.  
\end{proof}

\subsubsection{Chain map property}
\label{sec:chain-map-property}

The fact that the continuation map is a chain map with respect to $d_{eq}$ is typically proved by analyzing the moduli space of continuation solutions with $\mathrm{vdim}(w)=1$ and $\mathfrak{n}=0$. Similarly to the proof of $d_{eq}^{2}=0$, one considers the nodal solutions with $\mathrm{vdim}(w)=0$ and $\mathfrak{n}=1$ as an interface between different components.

The standard arguments (in the spirit of the proof that $d_{eq}^{2}=0$) show that, for very regular continuation data, it holds that $d_{eq}\circ \mathfrak{c}=\mathfrak{c}\circ d_{eq}$. then:
\begin{theorem}
  For regular continuation data, it holds that $d_{eq}\circ \mathfrak{c}=\mathfrak{c}\circ d_{eq}$.
\end{theorem}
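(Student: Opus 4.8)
The plan is to prove the identity first for \emph{very regular} continuation data, where it follows from a standard count of the boundary points of a compact one-manifold, and then to remove the very regularity hypothesis using the same stabilization trick that upgrades Theorem~\ref{theorem:diff-structural} from very regular to merely regular Borel data. Throughout, write $d_{eq}^{-}$ and $d_{eq}^{+}$ for the equivariant differentials at the two endpoints $(P_{-},J_{-})$ and $(P_{+},J_{+})$, so that the assertion to be proved is $d_{eq}^{+}\circ\mathfrak{c}=\mathfrak{c}\circ d_{eq}^{-}$.

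For very regular continuation data $(P_{\eta,\sigma},J_{\eta,\sigma})$ I would argue as in \S\ref{sec:proof-theorem-diff-structural}. Let $\mathscr{M}^{1}_{\mathrm{cont}}(x,y)$ denote the space of non-nodal continuation trajectories with $\mathrm{vdim}(w)=1$ joining a zero $x$ of $P_{-}$ to a zero $y$ of $P_{+}$, taken modulo the free $\R^{k}$-action; very regularity makes this a smooth one-manifold. Using Gromov--Floer compactness (and, for the exceptional configurations with coincident pearl endpoints and a Maslov-$2$ bubble, the compactified one-manifold $\Pi_{2}^{x}$ of \S\ref{sec:maslov-2-disks}, exactly as in \S\ref{sec:gradient-bound-exceptional}), I would identify its compactification as a compact one-manifold whose boundary consists of: fibre-product configurations with one Morse breaking at the $\sigma=-\infty$ end, which enumerate the coefficients of $\mathfrak{c}\circ d_{eq}^{-}$; fibre-product configurations with one Morse breaking at the $\sigma=+\infty$ end, which enumerate the coefficients of $d_{eq}^{+}\circ\mathfrak{c}$; and nodal continuation trajectories with exactly one node and $\mathrm{vdim}(w)=0$, each of which glues back in precisely two ways and so contributes $0$ modulo $2$. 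Since a compact one-manifold has an even number of boundary points, this gives $d_{eq}^{+}\circ\mathfrak{c}+\mathfrak{c}\circ d_{eq}^{-}=0$, which over $\mathbf{k}=\Z/2\Z$ is the chain-map identity.

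To pass from very regular to merely regular continuation data, I would first use Theorem~\ref{theorem:regular-b-data} to approximate the endpoint Borel data $(P_{\pm},J_{\pm})$ by very regular Borel data, observing (as in the proof of Theorem~\ref{theorem:diff-structural}) that such a small perturbation does not change $d_{eq}^{\pm}$: the defining solutions have $\mathrm{vdim}(w)=0$, are rigid and transversally cut out, hence persist, and no new rigid solutions can appear, since a sequence of such would Gromov--Floer converge, along the approximating data, to a configuration for the original data all of whose components have non-negative virtual dimension, forcing a single non-nodal limit of virtual dimension $0$, i.e.\ one of the originals. Holding these very regular endpoints fixed, I would then use the continuation-data analogue of Theorem~\ref{theorem:regular-b-data} (established in \S\ref{sec:very-regular-continuation-deformation-data-exists}) to approximate the interior by very regular continuation data, and the same stabilization argument shows that the count defining $\mathfrak{c}$ is unchanged. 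Applying the very regular case to this approximation then yields $d_{eq}^{+}\circ\mathfrak{c}=\mathfrak{c}\circ d_{eq}^{-}$ for the original regular data.

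I expect the main obstacle to be the ``no new rigid solutions'' half of the stabilization argument: one must preclude, along a sequence of very regular approximations to the original data, a sequence of rigid continuation (or differential) trajectories escaping every neighbourhood of the finitely many original rigid solutions. The monotonicity a priori energy bound plus Gromov--Floer compactness (\S\ref{sec:grom-floer-comp}) reduce this to the observation that the virtual dimension can only drop under degeneration, so any limit for the original data has total virtual dimension $\le 0$ while, by regularity, each of its components has virtual dimension $\ge 0$; hence the limit is a single non-nodal virtual-dimension-$0$ solution. Some care is needed to carry out the approximation in two stages---endpoints first, then interior---so that neither $d_{eq}^{\pm}$ nor $\mathfrak{c}$ moves, and to fold the exceptional Maslov-$2$ configurations of \S\ref{sec:gradient-bound-exceptional} into this bookkeeping; compare \cite[\S6.3]{biran-khanevsky-CMH-2013} for a closely analogous argument.
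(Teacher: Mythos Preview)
Your approach matches the paper's: prove the identity for very regular continuation data by counting ends of the one-dimensional moduli space (fibre-product breakings at the two ends give the compositions, nodal interfaces cancel in pairs), then upgrade to regular data by the stabilization/approximation trick of \S\ref{sec:proof-theorem-diff-structural}.

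One simplification: your two-stage approximation (first perturb the endpoints to very regular Borel data, then the interior) is more than the paper does and more than is needed. The definition of very regular \emph{continuation} data only requires the endpoint Borel data to be \emph{regular}, not very regular; so one can approximate the given regular continuation data by very regular continuation data while keeping the endpoints fixed. Then $d_{eq}^{\pm}$ are literally unchanged (no stabilization argument needed for them), and only the count defining $\mathfrak{c}$ needs the ``no new rigid solutions'' argument. This is what the paper means by ``follows the same approximation argument,'' and it sidesteps the bookkeeping of perturbing the endpoints and identifying the perturbed chain complexes with the originals.
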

\begin{proof}
  First one proves it for very regular continuation data as explained above. Then one follows the same approximation argument used in \S\ref{sec:proof-theorem-diff-structural} to deduce it for regular data.
\end{proof}

\subsubsection{Homotopies of continuation data give chain homotopic maps}
\label{sec:homot-cont-data}

The argument follows more or less verbatim the argument of \cite{hofer-salamon-95} (in Hamiltonian Floer theory). See also \cite[\S5.1.2]{biran-cornea-arXiv-2007} for a related argument in the setting of pearl complexes.

Let $(P_{\eta,\sigma,\tau},J_{\eta,\sigma,\tau})$ be a homotopy of continuation data, defined for $\tau\in [0,1]$, with fixed endpoints:
\begin{itemize}
\item $P_{\eta,\pm\sigma,\tau}=P_{\eta,\pm,0}$ and $J_{\eta,\pm\sigma,\tau}=J_{\eta,\pm,\tau}$ for $\sigma\ge \sigma_{0}$.
\end{itemize}
Then one can consider the parametric moduli space of pairs $(\tau,w)$ where $w$ solves the (nodal) continuation equation for the data at parameter $\tau$. After quotienting by the $\R^{k}$ action on by translation of the pearls, such solutions have virtual dimension:
\begin{equation*}
  \mathrm{vdim}(\tau,w)=\mu(w)+\dim \mathscr{P}+\mathrm{ind}(x_{+})-\mathrm{ind}(x_{-})+1-\mathfrak{n};
\end{equation*}
provided the solution space is regular (in the parametric sense) the dimension of the quotiented moduli space near $(\tau,w)$ agrees with the above virtual dimension. If the endpoints of the path are regular continuation data, then one can ensure the parametric moduli space is regular by a generic perturbation of $(P_{\eta,\sigma,\tau},J_{\eta,\sigma,\tau})$ for $\tau\in (0,1)$.

The component with $\mathrm{vdim}(w,\tau)=1$ and $\mathfrak{n}=0$, is used to prove the continuation maps defined using the data at endpoints $\tau=0,1$ are chain homotopic (as before, one needs to compactify this component by gluing along the solutions with $\mathrm{vdim}(w,\tau)=0$ and $\mathfrak{n}=1$ considered as interfaces between moduli spaces with different numbers of pearls). The chain homotopy term is defined by counting the component with $\mathrm{vdim}(w,\tau)=0$ and $\mathfrak{n}=0$.

{\itshape Consequently $\mathfrak{c}:\mathrm{QH}^{*}_{eq}(L;P_{\eta,-},J_{\eta,-})\to \mathrm{QH}^{*}_{eq}(L;P_{\eta,+},J_{\eta,+})$ is independent of the choice of regular continuation $(P_{\eta,\sigma},J_{\eta,\sigma})$.}

\subsubsection{Concatenations yield compositions of continuation maps}
\label{sec:concatenations}

This relies on \S\ref{sec:homot-cont-data} and a gluing result similar to those invoked already in \S\ref{sec:gluing}. The outline of the argument is still the same as the one in \cite{hofer-salamon-95}, and we omit the verification that the standard argument works as expected. The outcome is: {\itshape the composition of two continuation maps is a continuation map associated to the concatenation}.

\subsubsection{The continuation endomorphism equals the identity map}
\label{sec:continuation-endomorphism}

Suppose that $(P_{\eta,\sigma},J_{\eta,\sigma})=(P_{\eta},J_{\eta})$ is regular Borel data (for defining $d_{eq}$), considered as $\sigma$-independent continuation data. Then the only solutions which contribute to the continuation map are the constant solutions localized to zeros of $P$, in the component $k=0$ and $\dim \mathscr{P}=0$. It follows that this continuation data is regular for defining $\mathfrak{c}$, and $\mathfrak{c}$ is the identity map. With this established, the functoriality of $(P,J)\mapsto \mathrm{QH}^{*}(L;P,J)$ is proved, and the invariance of $\mathrm{QH}^{*}(L;P,J)$ follows. See \cite[\S5.1.2]{biran-cornea-arXiv-2007} and \cite[\S10.3]{schmaschke-dissertation-2016} for related discussion of invariance.

\subsection{Equivariant displacements}
\label{sec:proof-theorem-displacement}

In this section we prove that $\mathrm{QH}^{*}_{eq}(L)=0$ if $L$ is equivariantly displaceable (Theorem \ref{theorem:displacement}). The idea is to deform the continuation trajectories from \S\ref{sec:cont-isom} using a Hamiltonian perturbation term.

The deformation trajectories we consider will involve holomorphic pearls, except that one pearl is allowed to be non-holomorphic, and is instead a solution of Floer's continuation equation.

\subsubsection{Deformation pearls}
\label{sec:deformation-pearls}

Let $J$ be an $\omega$-tame almost complex structure, and let $X_{s,t}$ be a family of Hamiltonian vector fields whose generator $H_{s,t}$ satisfies $H_{s,t}=0$ for $\abs{s}\ge s_{0}$.

A \emph{deformation pearl} for this data is simply a map:
\begin{equation*}
  \left\{
    \begin{aligned}
      &u:\R\times [0,1]\to (W,L),\\
      &\bd_{s}u+J(u)(\bd_{t}u-X_{s,t}(u))=0,\\
      &\textstyle\int \omega(\bd_{s}u,\bd_{t}u-X_{s,t}(u))<\infty.
    \end{aligned}\right.
\end{equation*}
We note that deformation pearls have well-defined asymptotics $u(\pm\infty)\in L$. Unlike the pearls which appear in the quantum differential or continuation isomorphism, deformation pearls can have negative Maslov number. However, the non-negative energy $E(u)$ (the above integral) is equal to:
\begin{equation*}
  E(u)=\omega(u)+\int \bd_{s}H_{s,t}(u)d s d t,
\end{equation*}
which implies $\omega(u)$ is bounded from below by minus the curvature:
\begin{equation*}
  \mathrm{curvature}=\sup\set{\int \bd_{s}H_{s,t}(v)dsdt},
\end{equation*}
where the supremum is over all smooth maps $v$. The relevant Floer theory constructions we will appeal to naturally yield Hamiltonians $H_{s,t}$ with a priori bounds on their curvature. This curvature bound and monotonicity imply that deformation pearls cannot have a too negative Maslov number; we return to this point in \S\ref{sec:deform-cont-map}.

\subsubsection{Deformation trajectories}
\label{sec:deformation-trajectories}

Define \emph{deformation data} as:
\begin{itemize}
\item a family of Hamiltonians $H_{\eta,s,t}$ as in \S\ref{sec:deformation-pearls} parametrized by $\eta\in S^{\infty}$,
\item Borel data $(P_{\eta},J_{\eta})$ for defining $d_{eq}$ as in \S\ref{sec:borel-data},
\end{itemize}
where $H_{\eta,s,t}$ satisfies the properties:
\begin{enumerate}[label=(H\arabic*)]
\item\label{H1} $H_{-\eta,s,t}=a^{*}H_{\eta,s,t}$,
\item $H_{\tau(\eta),s,t}=H_{\eta,s,t}$,
\item $H_{\eta,s,t}=0$ for $\abs{s}\ge s_{0}$,
\item\label{H4} the curvature of $H_{\eta,s,t}$ is bounded from above by a constant $C_{0}$.
\end{enumerate}
Different choices of deformation Borel data are allowed different choices for the constants $C_{0}$ and $s_{0}$, but these constants should be uniform in the variable $\eta\in S^{\infty}$.

Given such data, consider the solutions $(b,\pi,u_{1},\dots,u_{k})$ to the \emph{deformation equation}:
\begin{equation*}
  \left\{
    \begin{aligned}
      &b\in \mathscr{G}_{k}^{\times}\text{ and }\pi\in \mathscr{P},\\
      &u_{\nu}:\R\times [0,1]\to (W,L),\\
      &\bd_{s}u_{\nu}+J_{\pi(b_{\nu})}(u_{\nu})\bd_{t}u_{\nu}=0\text{ except for }\nu=m\\
      &\mu(u_{\nu})>0\text{ except for }\nu=m.
    \end{aligned}
  \right.
\end{equation*}
The missing pearl $u_{m}$ instead solves the deformation pearl equation:
\begin{equation*}
  \begin{aligned}
    &\bd_{s}u_{m}+J_{\pi(b_{m})}(u_{m})(\bd_{t}u_{m}-X_{\pi(b_{m}),s,t}(u_{m}))=0,
  \end{aligned}
\end{equation*}
and is allowed to have non-positive Maslov number. Each deformation trajectory has exactly one deformation pearl. The deformation pearl may be a constant map. Solutions must satisfy the following incidence condition:
\begin{itemize}
\item the flow line of $P_{\pi(s)}$ on the interval $s\in [b_{i},b_{i+1}]$ takes $u_{i}(+\infty)$ to $u_{i+1}(-\infty)$ for $i=1,\dots,k-1$.
\end{itemize}
The asymptotics $x_{\pm}$ of such a solution are obtained by flowing the initial and terminal points and twisting by $a$ depending on whether $\pi\in \mathscr{P}^{-}$ or $\pi\in \mathscr{P}^{+}$, as in the pearl differential.

As in \S\ref{sec:linearized-operators}, this equation determines a linearized operator acting on:
\begin{equation*}
  W^{1,p,\delta}(u_{1}^{*}TW)\oplus \dots\oplus W^{1,p,\delta}(u_{k}^{*}TW)\oplus \mathscr{I}_{b,\pi,p}
\end{equation*}
where $\mathscr{I}_{b,\pi,p}\subset T\mathscr{G}_{k,b}\oplus T\mathscr{P}_{\pi}\oplus TL^{\oplus 2k}_{p}$ forms the coincident variations: here $p=(u_{1}(-\infty),u_{1}(+\infty),\dots,u_{k}(-\infty),u_{k}(+\infty))$.

By a similar argument to the one given in \S\ref{sec:linearized-operators}, this linearized operator is a Fredholm operator with index:
\begin{equation*}
  \mathrm{index}(w)=\mu(w)+\mathrm{ind}(x_{+})-\mathrm{ind}(x_{-})+k+\dim \mathscr{P}.
\end{equation*}
Now there is only an $\R^{k}$-action on the moduli space rather than a $\R^{k+1}$-action --- this is because the deformation pearl should not be translated.

Similarly to \S\ref{sec:nodal-curv}, one also considers nodal solutions, where nodal chains of holomorphic disks located at the same position $b_{i}$ are allowed (this includes chains of holomorphic pearls at either end of the deformation pearl). The virtual dimension formula for a configuration with $k$ pearls and $\mathfrak{n}$ nodes is:
\begin{equation*}
  \mathrm{vdim}(w)=\mathrm{index}(w)-k=\mu(w)+\mathrm{ind}(x_{+})-\mathrm{ind}(x_{-})+\dim \mathscr{P}-\mathfrak{n}.
\end{equation*}

This leads to the definition of regular deformation data:
\begin{definition}\label{definition:very-regular-deformation-data}
  Data $(P_{\eta},J_{\eta},H_{\eta,s,t})$ is said to be regular deformation data provided that:
  \begin{enumerate}
  \item $P_{\eta},J_{\eta}$ is regular Borel data for defining the quantum differential,
  \item each (nodal) deformation trajectory with $\mathrm{vdim}(w)\le -\mathfrak{n}$ has a surjective linearized operator.
  \end{enumerate}
  If instead we replace the second item by:
  \begin{enumerate}[resume]
  \item each (nodal) deformation trajectory with $\mathrm{vdim}(w)\le 1-\mathfrak{n}$ has a surjective linearized operator,
  \end{enumerate}
  then the data is said to be very regular.
\end{definition}
Let us note that that the constant data $H_{\eta,s,t}=0$ is very regular deformation data provided $(P_{\eta},J_{\eta})$ is regular data for the quantum differential; the argument is the same one given in \S\ref{sec:continuation-endomorphism}. In \S\ref{sec:very-regular-continuation-deformation-data-exists} we claim that very regular deformation data is generic, in a manner analogous to Theorem \ref{theorem:regular-b-data}.

\subsubsection{The deformed continuation map}
\label{sec:deform-cont-map}

Let $(P_{\eta},J_{\eta},H_{\eta,s,t})$ be regular deformation data. Then one can count the moduli space of orbits of the $\R^{k}$ action on the component with $\mathrm{vdim}(w)=0$ and $\mathfrak{n}=0$. In this count, one considers all the moduli spaces, allowing any possible choice for:
\begin{itemize}
\item the number of pearls $k$, 
\item the flow line space $\mathscr{P}$ on $S^{\infty}$ starting at $v_{0,+}$,
\item the total Maslov number, and,
\item the position $\ell\in \set{1,\dots,k}$ of the deformation pearl. 
\end{itemize}
The count takes a generator $x_{-}\in \mathrm{CM}(P)$ as input, and returns an element of $\mathrm{CM}(P)\otimes \mathbf{k}[q^{-1},q]]\otimes \mathbf{k}[e]$ as output:
\begin{equation*}
  \mathfrak{h}(x_{-}):=\sum e^{\dim \mathscr{P}}q^{\mu(w)}x_{+}(w).
\end{equation*}
We recall that, as always, the asymptotics are appropriately twisted by the involution $a$, as explained in \S\ref{sec:defin-diff}.
\begin{lemma}\label{lemma:finiteness}
The sum over the $\mathrm{vdim}(w)=\mathfrak{n}=0$ component is finite for regular deformation data.
\end{lemma}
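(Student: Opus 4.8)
The plan is to show that the index constraint $\mathrm{vdim}(w)=0$ together with $\mathfrak n=0$ forces uniform \emph{a priori} bounds on all the discrete and continuous parameters describing a deformation trajectory $w=(b,\pi,u_1,\dots,u_k)$, after which finiteness reduces to a Gromov--Floer compactness statement. The key point distinguishing this from the analogous lemma for the ordinary continuation map (the Lemma in \S\ref{sec:continuation-map}) is that one of the pearls, $u_m$, is a solution of Floer's continuation equation and is permitted to have non-positive Maslov number, so positivity of $\mu(w)$ is no longer automatic; this will be the main obstacle, and it is resolved using the curvature bound \ref{H4}.

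First I would extract the \emph{a priori} bounds. From the virtual-dimension formula
\begin{equation*}
  \mathrm{vdim}(w)=\mu(w)+\mathrm{ind}(x_+)-\mathrm{ind}(x_-)+\dim\mathscr{P}-\mathfrak n=0
\end{equation*}
with $\mathfrak n=0$, and from $\mathrm{ind}(x_\pm)\in\{0,\dots,\dim L\}$, one gets $\mu(w)+\dim\mathscr{P}=\mathrm{ind}(x_-)-\mathrm{ind}(x_+)\le \dim L$. Now write $\mu(w)=\mu(u_m)+\sum_{\nu\ne m}\mu(u_\nu)$. For $\nu\ne m$ each $u_\nu$ is a genuine $J_{\pi(b_\nu)}$-holomorphic disk with $\mu(u_\nu)>0$, hence $\mu(u_\nu)\ge N\ge 2$ by the minimal-Maslov hypothesis. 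For the deformation pearl $u_m$, the energy identity from \S\ref{sec:deformation-pearls} gives $\omega(u_m)=E(u_m)-\int\partial_s H_{\pi(b_m),s,t}(u_m)\ge -C_0$ by \ref{H4}, and monotonicity of $L$ (together with the constraint that $u_m$ is a strip with boundary on $L$, so $\mu(u_m)=2\lambda\,\omega(u_m)$ for the monotonicity constant $\lambda>0$ up to the usual normalization) yields $\mu(u_m)\ge -2\lambda C_0$, i.e.\ a uniform lower bound independent of $w$. Combining, $\mu(w)\ge -2\lambda C_0 + (k-1)N$, so $\mathrm{vdim}(w)=0$ forces $(k-1)N - 2\lambda C_0 + \dim\mathscr{P}\le \dim L$, giving a uniform upper bound on the number of pearls $k$, on $\dim\mathscr{P}$ (hence on the index $j$ of the flow-line space $\mathscr{P}=\mathscr{P}^{j,\pm}$), and then also on $\mu(w)$ and on $\omega(u_\nu)$ for every $\nu$ (using the curvature bound again for $u_m$). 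Note also that the position $m$ of the deformation pearl ranges over the finite set $\{1,\dots,k\}$.

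With all discrete data confined to a finite set and all energies uniformly bounded, I would finish by the usual Gromov--Floer compactness argument: suppose for contradiction that for some fixed choice of $k$, $m$, $\mathscr{P}$, and total Maslov number there were infinitely many $\R^k$-orbits of solutions; by Arzel\`a--Ascoli (valid because $W$ is tame at infinity and the first-derivative/gradient bounds apply to the holomorphic pearls, while the deformation pearl $u_m$ enjoys the analogous bounds for Floer's equation with the bounded Hamiltonian $H$) one extracts a convergent subsequence degenerating to a limit configuration of lower virtual dimension --- a broken deformation trajectory, possibly with disk/sphere bubbles, Morse breakings of $P$ at the ends, breakings of $\pi$ in $\mathscr{P}/\R$, or collisions in $\mathscr{G}_k$ producing nodes. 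Each such degeneration drops $\mathrm{vdim}$ strictly below $0$ (bubbling drops it by at least $N-$ corrections absorbed into $\mathfrak n$ as in \S\ref{sec:gradient-bound}; additional nodes drop it by the number of nodes; Morse or $\mathscr{P}$-breakings produce a fibre-product configuration one of whose pieces has $\mathrm{vdim}<0$), contradicting the regularity of the deformation data in the sense of Definition \ref{definition:very-regular-deformation-data}, which forbids (nodal) deformation trajectories with $\mathrm{vdim}(w)<-\mathfrak n$. Hence the sum defining $\mathfrak h(x_-)$ has only finitely many nonzero terms, which is the assertion. The one subtlety worth flagging explicitly is making sure that in the limit the single deformation pearl does not itself bubble off a non-constant \emph{holomorphic} disk with non-positive Maslov number --- but this is exactly what the curvature bound rules out, since after the bubble splits the remaining continuation piece still obeys the $-C_0$ energy bound while the bubble, being honestly holomorphic with boundary on the monotone $L$, carries Maslov number $\ge N$, so the total count of "lost" virtual dimension is still strictly positive.
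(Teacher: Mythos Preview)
Your proposal is correct and follows essentially the same approach as the paper: you bound $\mu(u_m)$ from below using monotonicity and the curvature bound \ref{H4}, derive uniform upper bounds on $k$, $\dim\mathscr{P}$, and $\mu(w)$ from the virtual-dimension constraint, and then invoke Gromov--Floer compactness together with the regularity hypothesis to preclude any infinite sequence of solutions. The paper's argument is the same, only slightly terser (it normalizes $\omega$ so that $\mu=N\omega$ rather than carrying the monotonicity constant), and it phrases the contradiction as the existence of a regular limiting solution with $\mathrm{vdim}(w_\infty)\le -\mathfrak{n}<0$.
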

\begin{proof}
  The crux of the matter is bounding $\dim \mathscr{P}$ and $\mu$ from above. Once these bounds are established, it is easy to prove compactness following a standard argument we outline at the end of the proof.

  The vanishing of the virtual dimension, and the positivity of $\dim \mathscr{P}$ and $\mu(u_{i})$ for holomorphic pearls show that it is sufficient to bound $\mu(u_{m})$ from below for the deformation pearl.

  We have $\mu(u_{m})=N\omega(u_{m})$ after rescaling $\omega$ if necessary. The energy identity from Floer theory \S\ref{sec:deformation-pearls} gives:
  \begin{equation*}
    \omega(u_{m})>-(\text{curvature of $H_{\pi(b_{m}),s,t}$})\ge -C_{0}
  \end{equation*}
  where $C_{0}$ is from the hypotheses of deformation data \S\ref{sec:deformation-trajectories}. It therefore follows that $\mu(u_{m})$ is bounded from below by $-NC_{0}$. Then we have:
  \begin{equation*}
    \dim \mathscr{P}\le \dim L+NC_{0}.
  \end{equation*}
  Similarly the total Maslov number of the holomorphic pearls is bounded from above by $\dim L+NC_{0}$. This also implies that the number of pearls $k$ is bounded from above.

  Now suppose one has a failure of compactness represented as a sequence $w_{n}$. One can pass to a subsequence so that $\mathscr{P}$, $k$, and the choice $m\in \set{1,\dots,k}$ are fixed along $w_{n}$.
  
  Then, this failure of compactness must produce:
  \begin{itemize}
  \item a limiting (nodal) solution with $\mathrm{vdim}(w_{\infty})<0$ which is regular;
  \end{itemize}
  note that the limiting solution may be non-nodal if it appears as one component of a Morse-type breaking (fibre product curve in \S\ref{sec:fibre-prod-curves}) or by some bubbling off of holomorphic spheres or disks. Such a limiting solution cannot exist by regularity. If a nodal solution develops it is straightforward to verify that $\mathrm{vdim}(w_{\infty})\le -\mathfrak{n}$, and thus the limiting solution is automatically regular.
\end{proof}

\subsubsection{Chain map property}
\label{sec:chain-map-property-1}

It follows from the preceding Lemma that $\mathfrak{h}$ is a well-defined endomorphism of $\mathrm{CM}(P)\otimes \mathbf{k}[q^{-1},q]]\otimes \mathbf{k}[e]$ when it defined using regular data. We show:
\begin{lemma}\label{lemma:chain-map}
  It holds that $\mathfrak{h}\circ d_{eq}=d_{eq}\circ \mathfrak{h}$ for regular data.
\end{lemma}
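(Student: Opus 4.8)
The plan is to follow the same strategy used to prove $d_{eq}^{2}=0$ in \S\ref{sec:proof-theorem-diff-structural}, namely, to analyze the ends of the one-dimensional component of the moduli space of deformation trajectories. First, I would assume that the deformation data is \emph{very regular} in the sense of Definition \ref{definition:very-regular-deformation-data}, and prove the identity in that case; the passage to merely regular data is then handled by the approximation argument of \S\ref{sec:proof-theorem-diff-structural}, using that the count defining $\mathfrak{h}$ stabilizes along a converging sequence of very regular data. So assume very regularity. Consider $\mathscr{M}_{1}(x,y)$, the moduli space of $\R^{k}$-orbits of (non-nodal) deformation trajectories with $x_{-}(w)=x$, $x_{+}(w)=y$, $\mathfrak{n}=0$, and $\mathrm{vdim}(w)=1$; by very regularity this is a smooth $1$-manifold whose compactification is obtained by adding finitely many boundary points coming from the codimension-one degenerations. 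Lemma \ref{lemma:finiteness}, applied with $\mathrm{vdim}=1$ in place of $\mathrm{vdim}=0$ (the same a priori bounds on $\dim\mathscr{P}$, $\mu(w)$, $k$ from the curvature bound \ref{H4} and monotonicity go through verbatim), guarantees this $1$-manifold has finitely many components, so counting boundary points mod $2$ gives zero.

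The main work is identifying the boundary strata. These are of three kinds, exactly as in \S\ref{sec:grom-floer-comp}: (i) a holomorphic pearl $u_{i}$ (or the deformation pearl $u_{m}$) breaks into a chain of holomorphic strips, or two markers $b_{i}$ collide, producing a nodal solution with $\mathfrak{n}=1$ and $\mathrm{vdim}=0$; (ii) a Morse flow line of $P$ breaks at one of the two ends, or the flow line $\pi\in\mathscr{P}$ breaks in the Morse sense on $S^{\infty}$, producing a fibre-product (``Morse breaking'') configuration; (iii) no new interface is created but the deformation pearl $u_{m}$ itself degenerates to a constant so that the Floer continuation pearl ``evaporates'' — but a constant deformation pearl is still an allowed deformation pearl, so this is an interior point of the compactified moduli space, not a boundary point (the breaking phenomena are captured by (i), where the pieces on either side of the constant deformation pearl reorganize into a chain). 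As in the proof that $d_{eq}^{2}=0$, a nodal solution of type (i) can be glued back in exactly two ways to yield ends of $\mathscr{M}_{1}$, so these contribute $0$ mod $2$ to the boundary count; hence the boundary count reduces to the type-(ii) fibre-product configurations. There are two species of these: in one, the Morse breaking occurs to the \emph{left} of the deformation pearl, so the configuration is a deformation trajectory with $\mathrm{vdim}=0$ glued onto a non-equivariant/equivariant pearl differential contribution at the incoming end — these enumerate the matrix coefficients of $\mathfrak{h}\circ d_{eq}$; in the other, the breaking occurs to the \emph{right}, and these enumerate $d_{eq}\circ\mathfrak{h}$. (One must be careful about the twisting of the asymptotics by $a$ in the $\mathscr{P}^{-}$ components, but this bookkeeping is identical to \S\ref{sec:defin-diff} and causes no sign issues over $\Z/2\Z$.) Summing the boundary contributions to zero gives $\mathfrak{h}\circ d_{eq} + d_{eq}\circ\mathfrak{h} = 0$, which over $\Z/2\Z$ is the claimed identity.

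The hard part, as usual in this circle of arguments, is the compactness and gluing input: one needs that the only codimension-one degenerations are the ones listed (no new phenomenon specific to the presence of a non-holomorphic deformation pearl with possibly negative Maslov number), and that the gluing maps for types (i) and (ii) are genuine homeomorphisms onto collar neighborhoods of the ends. The negative-Maslov issue is precisely where the curvature bound \ref{H4} and monotonicity enter: they force $\mu(u_{m}) \ge -NC_{0}$, so bubbling or breaking of the deformation pearl still drops virtual dimension below what very regularity allows, ruling out spurious ends — this is the content already flagged at the end of \S\ref{sec:deform-cont-map} and in the proof of Lemma \ref{lemma:finiteness}. Granting the standard gluing and compactness package (for which we have deferred to \cite{biran-cornea-arXiv-2007,seidel-smith-GAFA-2010,seidel-GAFA-2015,schmaschke-dissertation-2016,brocic-cant-JFPTA-2024}), the remainder is the routine mod-$2$ boundary count sketched above.
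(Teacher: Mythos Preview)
Your proposal is correct and follows essentially the same approach as the paper's proof: reduce to very regular data via approximation, count ends of the $\mathrm{vdim}=1$, $\mathfrak{n}=0$ component, identify the nodal ends (which glue in two ways and cancel mod $2$) versus the fibre-product ends (which contribute the coefficients of $\mathfrak{h}\circ d_{eq}$ and $d_{eq}\circ\mathfrak{h}$ depending on which side of the breaking the deformation pearl lands). Your additional remarks on the constant-deformation-pearl case and on the curvature bound ruling out spurious ends are accurate elaborations of points the paper treats more tersely.
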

\begin{proof}
  As explained in \S\ref{sec:proof-theorem-diff-structural}, it suffices to prove it for very regular data (see Definition \ref{definition:very-regular-deformation-data}). We count the ends of the moduli space of deformation trajectories in the component $\mathrm{vdim}(w)=1$ and $\mathfrak{n}=0$. These ends come in two types:
  \begin{enumerate}
  \item fibre-product configurations contributing to $\mathfrak{h}\circ d_{\mathrm{eq}}$ or $d_{\mathrm{eq}}\circ \mathfrak{h}$, (which case is determined by where the deformation pearl ends up),
  \item nodal configurations with $\mathrm{vdim}(w)=0$ and $\mathfrak{n}=1$.
  \end{enumerate}
  Other failures of compactness can be precluded using the arguments in the proof of Lemma \ref{lemma:finiteness}. By the aforementioned gluing results \S\ref{sec:gluing}, each end of the second type appears exactly twice, and each end of the first type appears exactly once. One concludes the desired relation.
\end{proof}

\subsubsection{Chain homotopy property}
\label{sec:chain-homot-prop}

It follows that $\mathfrak{h}$ induces a well-defined endomorphism of the homology $\mathrm{QH}_{eq}^{*}(L;P_{\eta},J_{\eta})$. We show:

\begin{lemma}
  It holds that $\mathfrak{h}=\id$ as maps on $\mathrm{QH}_{eq}^{*}(L;P_{\eta},J_{\eta})$, provided that $(P_{\eta},J_{\eta},H_{\eta,s,t})$ is regular deformation data.
\end{lemma}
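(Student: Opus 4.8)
The plan is to compare $\mathfrak{h}$ with the operator $\mathfrak{h}_0$ defined from the \emph{same} Borel data $(P_\eta,J_\eta)$ but with the trivial Hamiltonian family $H_{\eta,s,t}\equiv 0$. As noted after Definition \ref{definition:very-regular-deformation-data} (via the argument of \S\ref{sec:continuation-endomorphism}), the trivial deformation data is very regular and $\mathfrak{h}_0$ is the identity chain map on $\mathrm{CM}(P)\otimes\mathbf{k}[q^{-1},q]]\otimes\mathbf{k}[e]$; and both $\mathfrak{h}$ and $\mathfrak{h}_0$ are chain maps by Lemma \ref{lemma:chain-map}. So it suffices to exhibit a chain homotopy $\mathfrak{h}-\mathfrak{h}_0=d_{eq}\circ K+K\circ d_{eq}$, whence $\mathfrak{h}=\id$ on $\mathrm{QH}^*_{eq}(L;P_\eta,J_\eta)$.

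To build $K$, first connect $H_{\eta,s,t}$ to $0$ through the linear path $H^\tau_{\eta,s,t}:=(1-\tau)H_{\eta,s,t}$, $\tau\in[0,1]$, keeping $(P_\eta,J_\eta)$ fixed. This path preserves \ref{H1}--\ref{H4}: the equivariance relations and the support condition are manifestly preserved, and the curvature of $(1-\tau)H_{\eta,s,t}$ equals $(1-\tau)$ times that of $H_{\eta,s,t}$, hence stays $\le C_0$. One then runs the homotopy-of-data argument of \S\ref{sec:homot-cont-data} with deformation trajectories in place of continuation trajectories: after a generic perturbation of $\{H^\tau\}_{\tau\in(0,1)}$ that fixes the endpoints and respects \ref{H1}--\ref{H4}, the parametric moduli space of pairs $(\tau,w)$ is regular; its stratum with $\mathrm{vdim}(w)=-1$ and $\mathfrak{n}=0$, modulo the $\R^k$-action, is a compact $0$-manifold, which defines $K$ using the weights appearing in the definition of $\mathfrak{h}$ together with the usual $a$-twisted asymptotics; and its stratum with $\mathrm{vdim}(w)=0$ and $\mathfrak{n}=0$ is a $1$-manifold whose ends are: the boundary points over $\tau=0,1$, contributing $\mathfrak{h}_0$ and $\mathfrak{h}$; the fibre-product breakings at $\pm\infty$, contributing $d_{eq}\circ K$ and $K\circ d_{eq}$ according to the side on which the deformation pearl lands; and the nodal configurations with $\mathfrak{n}=1$ and $\mathrm{vdim}(w)=-1$, each of which glues in two ways and so cancels modulo $2$. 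Summing the ends gives the asserted chain homotopy identity. Note it is precisely the possibility of a deformation pearl with non-positive Maslov number, from \S\ref{sec:deformation-pearls}, that makes the $\mathrm{vdim}(w)=-1$ stratum nonempty in general.

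The main obstacle, as in Lemmas \ref{lemma:finiteness} and \ref{lemma:chain-map}, is the compactness of the parametric moduli spaces. The only degeneration to rule out beyond those already treated in \S\ref{sec:homot-cont-data} is the deformation pearl acquiring an arbitrarily negative Maslov number along a sequence $(\tau_n,w_n)$; this is excluded by the energy identity of \S\ref{sec:deformation-pearls} and the \emph{uniform} bound $C_0$ on the curvature of $H^\tau$, which gives $\mu(u_m)\ge -NC_0$ independently of $\tau$ and hence a priori bounds on $\dim\mathscr{P}$, on the total Maslov number, and on the number of pearls --- verbatim the estimate in the proof of Lemma \ref{lemma:finiteness}. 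With those bounds in hand, every remaining failure of compactness produces a nodal, fibre-product, or bubbled limit of strictly smaller virtual dimension, which is precluded by regularity of the perturbed parametric data. The one point requiring care --- but entirely routine, following Theorem \ref{theorem:regular-b-data} and the constructions of \cite{seidel-smith-GAFA-2010,seidel-GAFA-2015} --- is that the generic perturbation achieving parametric transversality can be chosen within the class of families satisfying \ref{H1}--\ref{H4}.
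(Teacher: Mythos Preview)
Your proof is correct and follows essentially the same approach as the paper: connect $H_{\eta,s,t}$ to $0$ by the linear homotopy (the paper writes $\tau H_{\eta,s,t}$, you write $(1-\tau)H_{\eta,s,t}$), observe that \ref{H1}--\ref{H4} are preserved along this path, invoke the parametric moduli space argument of \S\ref{sec:homot-cont-data} to produce the chain homotopy, and use that the trivial Hamiltonian gives the identity on the chain level via \S\ref{sec:continuation-endomorphism}. Your write-up is more explicit about the compactness bookkeeping and the dimension conventions for the parametric strata, but the underlying argument is the same.
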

\begin{proof}
  First we claim that the chain homotopy class of $\mathfrak{h}$ depends only on the homotopy class of $H_{\eta,s,t}$ in the space of data satisfying \ref{H1} through \ref{H4}. The argument establishing this is an application of parametric moduli spaces; see \S\ref{sec:homot-cont-data}. Then we claim that $\tau \mapsto (P_{\eta},J_{\eta},\tau H_{\eta,s,t})$ is a homotopy between $(P_{\eta},J_{\eta},0)$ and $(P_{\eta},J_{\eta},H_{\eta,s,t})$, satisfying \ref{H1} through \ref{H4} throughout the homotopy. By the same argument in \S\ref{sec:continuation-endomorphism}, it follows that the map $\mathfrak{h}$ defined using $(P_{\eta},J_{\eta},0)$ is the identity map on chain level; the desired result follows.
\end{proof}

\subsubsection{Vanishing property}
\label{sec:vanishing-property}

We now come to the crucial property, that $\mathfrak{h}$ vanishes in the presence of an equivariant displacement of $L$. This section will complete the proof of Theorem \ref{theorem:displacement}. In fact, we will prove a seemingly stronger statement. Suppose that $H_{\eta,t}$ is a family satisfying:
\begin{itemize}
\item $H_{-\eta,t}=a^{*}H_{\eta,t}$,
\item $H_{\tau(\eta),t}=H_{\eta,t}$,
\item $\textstyle \int \max H_{\eta,t}-\min H_{\eta,t} dt$ is uniformly bounded from above.
\end{itemize}
Then, if we define, for a standard cut-off function $\beta$,
\begin{equation}\label{eq:cut-off-formula}
  H_{\eta,s,t}=\beta(s_{1}-s)\beta(s+s_{1})H_{\eta,t},
\end{equation}
we obtain a system satisfying \ref{H1} through \ref{H4}, for any choice of $s_{1}$. Moreover, the curvature bound is independent of $s_{1}$. One obvious choice of $H_{\eta,t}$ is $H_{\eta,t}=H_{t}$ where $H_{t}$ is equivariant. To ensure regularity, one allows a small $\eta$-dependent perturbation which can be made without increasing the curvature by more than any small amount $\epsilon$; these perturbations should be supported outside $[1-s_{1},s_{1}-1]$. Then:

\begin{lemma}
  If the isotopy generated by $H_{\eta,t}$ displaces $L$ for each $\eta$, then the deformation map $\mathfrak{h}$ defined using \eqref{eq:cut-off-formula} vanishes for $s_{1}$ sufficiently large.
\end{lemma}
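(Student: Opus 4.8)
The plan is to exploit an energy/action argument combined with the structure of the deformation pearl to show that for $s_1$ large there are simply no solutions contributing to $\mathfrak{h}$. The key observation is that the deformation pearl $u_m$ satisfies a Floer continuation equation whose Hamiltonian term $H_{\eta,s,t}=\beta(s_1-s)\beta(s+s_1)H_{\eta,t}$ is ``switched on'' for a long $s$-interval of length roughly $2s_1$. Because the time-$1$ map of the flow generated by $H_{\eta,t}$ displaces $L$, there can be no $1$-periodic (in $t$) Hamiltonian chords of $L$; so on the long plateau where the equation is genuinely the Floer equation for $H_{\eta,t}$, any solution $u_m$ must lose a definite amount of energy. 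More precisely, I would establish a quantitative lower bound: there is a constant $c>0$ (depending on the displacement energy of $L$ under $H_{\eta,t}$, uniform in $\eta$ by the hypotheses) such that every deformation pearl for the cut-off data \eqref{eq:cut-off-formula} has $E(u_m)\ge c\cdot s_1$ once $s_1$ is large — the solution has to ``cross'' the displacing region and each unit of $s$-length in the plateau contributes a fixed minimum of energy, by a monotonicity/no-chord argument as in standard displacement proofs (Chekanov, Gromov, etc.).

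With this energy lower bound in hand, the second step is to feed it back into the dimension count. From the energy identity $E(u_m)=\omega(u_m)+\int\partial_sH(u_m)$ and the curvature bound \ref{H4} (which is independent of $s_1$), a lower bound $E(u_m)\ge c\,s_1$ forces $\omega(u_m)\ge c\,s_1 - C_0$, hence by monotonicity $\mu(u_m)\ge N(c\,s_1 - C_0)$, which is a large \emph{positive} number once $s_1$ is large. But in the proof of Lemma \ref{lemma:finiteness} we already saw that the total Maslov number of a $\mathrm{vdim}(w)=0$ deformation trajectory is bounded above by $\dim L + NC_0$, independently of $s_1$ (the holomorphic pearls contribute positively and $\mu(u_m)\ge -NC_0$). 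Combining the new lower bound $\mu(u_m)\ge N(c\,s_1-C_0)$ with $\mu(w)\le \dim L + NC_0$ gives a contradiction as soon as $c\,s_1 > 2C_0 + \dim L/N$. Therefore for $s_1$ sufficiently large the moduli space of $\mathrm{vdim}(w)=\mathfrak{n}=0$ deformation trajectories is empty, so $\mathfrak{h}=0$ on the chain level, and \emph{a fortiori} on homology.

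I would organize the write-up as: (i) recall the cut-off Hamiltonian and note the curvature bound is $s_1$-independent; (ii) prove the energy lower bound for the deformation pearl — this is the technical heart; (iii) translate into a Maslov lower bound via the energy identity and monotonicity; (iv) contrast with the $s_1$-independent Maslov \emph{upper} bound from Lemma \ref{lemma:finiteness} and conclude emptiness of the relevant moduli space for large $s_1$, hence $\mathfrak{h}=0$. One should also note that the regularity hypothesis is harmless here: for generic small ($\eta$-dependent) perturbations of $H_{\eta,t}$ supported away from the plateau, the data is regular while the curvature and the displacement energy change by at most $\epsilon$, so the bounds survive.

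The main obstacle is step (ii), the quantitative energy lower bound for a single deformation pearl living over a point $\pi(b_m)\in S^\infty$. Two subtleties enter. First, the pearl $u_m$ is a map of a \emph{strip} $\R\times[0,1]$ with boundary on $L$, not a cylinder, so the relevant ``no-chord'' statement is about Hamiltonian chords of $L$ with itself, which is exactly what $\varphi_1^{H}(L)\cap L=\emptyset$ rules out; one must phrase the plateau estimate in terms of the Lagrangian action functional and a lower bound on the ``gap'' coming from the displacement energy, rather than a spectral gap of periodic orbits. Second, the estimate must be uniform over $\eta\in S^\infty$ and over the choice of which curve in the flow line $\pi$ one evaluates at — but this uniformity is guaranteed by the hypothesis that $\int(\max H_{\eta,t}-\min H_{\eta,t})\,dt$ is uniformly bounded (giving a uniform upper bound on the relevant displacement-type energy) together with the uniform curvature bound \ref{H4}. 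Modulo these points, the estimate is the standard ``crossing energy'' argument used in proofs that Floer homology vanishes under displacement, adapted to pearl trajectories.
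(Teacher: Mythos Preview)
Your approach is essentially the same as the paper's: both arguments show the deformation pearl must carry energy growing linearly in $s_{1}$ on the plateau where it solves the genuine Floer equation for $H_{\eta,t}$, and then contrast this with the $s_{1}$-independent upper bound on $\omega(u_{m})$ (equivalently $\mu(u_{m})$) inherited from Lemma~\ref{lemma:finiteness}. The paper phrases the final contradiction directly in terms of energy rather than translating through Maslov numbers, but this is cosmetic.

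There is one point where your justification is off. You claim the uniformity of the crossing-energy constant $c$ over $\eta\in S^{\infty}$ follows from the uniform Hofer-norm bound $\int(\max H_{\eta,t}-\min H_{\eta,t})\,dt\le C$. That bound goes the wrong way: it caps the displacement energy from above, not the crossing energy from below. One can have a family of Hamiltonians with uniformly bounded Hofer norm, each displacing $L$, yet with $\varphi_{1}^{H_{\eta}}(L)$ approaching $L$ and the minimal Floer-strip energy tending to zero. The correct source of uniformity, which the paper uses, is compactness of the effective parameter space: Lemma~\ref{lemma:finiteness} gives an a priori bound $\dim\mathscr{P}\le \dim L+NC_{0}$, and since the pseudogradient $\mathfrak{S}$ on $S^{\infty}$ is tangent to each finite-dimensional $S^{j}$, the values $\pi(b_{m})$ lie in a fixed compact sphere. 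Then the usual Gromov-compactness argument (a sequence of length-$1$ Floer strips with energy $\to 0$ would limit to a chord, which does not exist) yields a uniform $c>0$. You should replace your Hofer-norm sentence with this compactness argument; everything else goes through.
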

\begin{proof}
  The energy of the deformation pearl $u_{m}$ restricted to $s\in [1-s_{1},s_{1}-1]$ is bounded from above by $\omega(u_{m})$ plus the curvature of $H_{\pi(b_{m}),s,t}$. Both quantities are uniformly bounded (the bound on $\omega(u_{m})$ comes from the bound on $\mu(u_{m})$). On this interval, $u_{m}$ solves Floer's equation for $H_{\pi(b_{m}),t}$.

  On the other hand, it follows that strips of length $1$ solving Floer's equation for $H_{\pi(b_{m}),t}$ cannot have an arbitrarily small energy (since this isotopy displaces $L$) --- one has compactness for the possible values of $\eta=\pi(b_{m})$ because of the a priori upper bound on $\dim \mathscr{P}$. Therefore, for $s_{1}$ large enough, the length of $[1-s_{1},s_{1}-1]$ is too long for any deformation pearl to exist without having too much energy, and so the map $\mathfrak{h}$ must vanish identically on chain level. This completes the proof of Theorem \ref{theorem:displacement}.
\end{proof}

\section{The Biran-Khanevsky construction and the proof of Theorem \ref{theorem:main-computation}}
\label{sec:proof-theorem-main-computation}

To prove Theorem \ref{theorem:main-computation}, we introduce a special class of Borel data $(P_{\eta},J_{\eta})$ which we call \emph{Biran-Khanevsky type}, inspired by the paper \cite{biran-khanevsky-CMH-2013}. To construct such data, we fix once and for all data $(Q,J)$ for defining the quantum cohomology complex of $\bar{L}$ inside of the reduction $M=\bd\Omega/(\R/\Z)$. As $L\to \bar{L}$ is a flat circle bundle, there exists a unique flat lift of $Q$ to $TL$, which we denote by $P_{\mathrm{flat}}$.

Unfortunately $P_{\mathrm{flat}}$ is not a Morse pseudogradient vector field. We explain how to modify this. Pick disjoint coordinate disks around each zero $z$ of $Q$, on which the flow is diagonal and linear, with radial coordinate $\rho$, and trivialize the bundle over these neighborhoods, obtaining a new fibre coordinate $x_{0}\in \R/\Z$. Define:
\begin{equation}\label{eq:formula-for-fibre}
  V_{z,\epsilon,\theta}=\beta(1-\epsilon^{-1}\rho)\sin(2\pi (x_{0}-\theta))\bd_{x_{0}},
\end{equation}
where $\beta$ is a standard cut-off function. Given $\theta:\set{\text{zeros of $Q$}}\to \R$, define the pseudogradient:
\begin{equation*}
  P_{\epsilon,\theta}=P_{\mathrm{flat}}+\sum_{z}V_{z,\epsilon,\theta(\zeta)},
\end{equation*}
This vector field is a Morse pseudogradient, is related to $Q$ under the projection $\bd\Omega\to M$, and there are two critical points $x(z),y(z)$ in the fibre over each zero $z$ of $Q$, where $\mathrm{ind}(y(z))=\mathrm{ind}(x(z))+1$.

For the remainder of this section, we will consider $\epsilon\in (0,1)$ fixed and $\theta$ as a generic choice, and abbreviate $P=P_{\epsilon,\theta}$, so that $a^{*}P=P_{\epsilon,\theta+0.5}$. The following lemma is proved in \S\ref{sec:morse-smale-property}.

\begin{lemma}\label{lemma:morse-smale}
  Fix $\epsilon\in (0,1)$. There is a dense open set of choices of $\theta$ which render $P_{\epsilon,\theta}$ a Morse-Smale pseudogradient.
\end{lemma}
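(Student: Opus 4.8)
The plan is to verify the Morse--Smale condition for $P_{\epsilon,\theta}$ in two stages: first observe that the Morse condition and the structure near the zeros hold for \emph{every} choice of $\theta$, and then show that genericity of $\theta$ suffices to achieve transversality of all stable/unstable intersections. For the first stage, note that near each zero $z$ of $Q$ the vector field $P_{\epsilon,\theta}$ is, by construction, $P_{\mathrm{flat}}$ plus the term $V_{z,\epsilon,\theta(z)}$ of \eqref{eq:formula-for-fibre}; on the locus $\rho$ small where $\beta(1-\epsilon^{-1}\rho)=1$, the flow splits as the (diagonal, linear, hyperbolic) flow of $Q$ in the base directions times the flow of $\sin(2\pi(x_0-\theta(z)))\bd_{x_0}$ in the fibre circle. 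The latter is a standard Morse flow on $S^1$ with a source at $x_0=\theta(z)$ and a sink at $x_0=\theta(z)+\tfrac12$; combining, one gets two hyperbolic zeros $x(z)$ (the sink in the fibre, so $\mathrm{ind}(x(z))=\mathrm{ind}_Q(z)$) and $y(z)$ (the source in the fibre, so $\mathrm{ind}(y(z))=\mathrm{ind}_Q(z)+1$), with linearizations having no zero eigenvalue. Away from these neighborhoods $P_{\epsilon,\theta}=P_{\mathrm{flat}}$ is nonvanishing because $Q$ is, so there are no other zeros; hence $P_{\epsilon,\theta}$ is Morse for all $\theta$.

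For the second stage, the Smale (transversality) condition is where genericity enters. The key geometric point is that $L\to\bar L$ is a flat circle bundle, so flow lines of $P_{\epsilon,\theta}$ project to flow lines of $Q$ (or broken pieces thereof near the perturbation regions), and $Q$ being Morse--Smale on $\bar L$ gives transversality downstairs. Upstairs, a flow line of $P_{\mathrm{flat}}$ covering a given $Q$-flow line is determined by its constant fibre coordinate (flatness), so the whole unstable manifold $U_{x(z)}$ or $U_{y(z)}$ is, outside the perturbation neighborhoods, a union of such flat lifts whose fibre coordinates are pinned down by the exit behavior from the source neighborhood of $z$; that exit behavior depends on $\theta(z)$ (and $\theta$ at the zero where the other manifold originates). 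The strategy is therefore: fix an enumeration of pairs of zeros $(z,z')$ and work by a finite downward induction on the Morse index of $z'$ minus that of $z$, perturbing $\theta(z)$ and $\theta(z')$ to make $U_{x}$ and $S_{y}$ transverse; since the base intersections are already transverse, the only possible failure is a coincidence of fibre coordinates, which is a codimension-$1$ condition in the circle and can be destroyed by an arbitrarily small change of the relevant $\theta$-values. Openness of the Smale condition is automatic (it is a closed condition that fails), so one gets a dense open set.

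Alternatively, and perhaps more cleanly, one can invoke a parametric Sard--Smale argument directly: form the universal moduli space of pairs (flow line from $x$ to $y$, value of $\theta$) and show the projection to the $\theta$-parameter space is a submersion at every point, using that varying $\theta(z)$ moves the unstable manifold $U_{y(z)}$ transversally within a neighborhood of $z$; then regular values of this projection give Morse--Smale $P_{\epsilon,\theta}$, and there are only finitely many pairs $(x,y)$ so the intersection of the corresponding full-measure sets is still dense, and open by the usual compactness of the space of broken trajectories of bounded index.

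The main obstacle I expect is handling the interaction between the flat part $P_{\mathrm{flat}}$ and the perturbation $\sum_z V_{z,\epsilon,\theta(z)}$ in the \emph{transition region} $\rho\sim\epsilon$ where $\beta(1-\epsilon^{-1}\rho)$ is neither $0$ nor $1$: there the vector field is genuinely a nonsplit mixture, and one must check that trajectories still behave well enough (e.g., that a trajectory entering a perturbation neighborhood either converges to $x(z)$ or exits, with the exit fibre-coordinate a well-controlled function of the entry data and of $\theta(z)$) so that the codimension count in the fibre direction is exactly as claimed. This is essentially a matter of analyzing the one-dimensional fibre ODE $\dot x_0 = \beta(1-\epsilon^{-1}\rho(t))\sin(2\pi(x_0-\theta(z)))$ along an incoming base trajectory $\rho(t)\to 0$, which is routine but is the step requiring genuine care; once it is in hand, the genericity argument is standard.
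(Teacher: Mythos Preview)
Your proposal is correct and follows essentially the same strategy as the paper: reduce to the Morse--Smale base $Q$, observe that lifts of a base flow line $\bar\gamma$ form a circle's worth, and then use genericity of $\theta$ to kill the single remaining fibre-coincidence condition. The paper organizes this a bit more cleanly than your sketch: rather than an induction on index difference or a universal-moduli-space argument, it simply observes that for each $\bar\gamma\in\mathscr{P}(z_-,z_+)$ there is a \emph{unique} lift $\gamma_y$ starting at $y(z_-)$ and a \emph{unique} lift $\gamma_x$ ending at $x(z_+)$, and packages their angular difference as a smooth map $f_{z_-,z_+}:\mathscr{P}(z_-,z_+)\to\R/\Z$; rotating $\theta(z_-)$ shifts $f_{z_-,z_+}$ by a constant, so ordinary Sard immediately makes $0$ a regular value for all pairs. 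Your worry about the transition region $\rho\sim\epsilon$ is unnecessary in this formulation: the uniqueness of $\gamma_x,\gamma_y$ follows directly from the local model \eqref{eq:formula-for-fibre} (the fibre ODE near $z_\pm$ has a single attracting/repelling fixed point), and the smoothness of $f$ follows from smooth dependence on initial conditions, with no need to analyze the mixed dynamics. Finally, note that the paper verifies Morse--Smale in its linearized-operator form by showing $D_\gamma$ is upper triangular with respect to the fibre/horizontal splitting and bounding $\dim\ker D_\gamma$ accordingly; this is equivalent to your stable/unstable transversality formulation, but is the form actually used later when proving regularity of Biran--Khanevsky type data.
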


Then the genericity results in \S\ref{sec:borel-data} imply $P$ can be extended to some regular Borel data $(P_{\eta},J_{\eta})$. However, we will need to pick our extension $(P_{\eta},J_{\eta})$ from a highly restricted class. To state the condition, introduce the symplectization end $SY\subset W$ with radial coordinate $r$ so that $r=1$ is $\bd\Omega$. The circle action leads to a special SFT type almost complex structure:
\begin{definition}
  Given $J$ on the reduction $Q$, define $J_{S}$ on $SY$ to be the unique almost complex structure satisfying:
  \begin{itemize}
  \item $J_{S}$ satisfies $J_{S}Z=R$ where $Z$ is the Liouville vector field,
  \item the projection map $SY\to Q$ is holomorphic from $J_{S}$ to $J$;
  \end{itemize}
  This latter projection map is the quotient by the Liouville and Reeb flows.
\end{definition}

\begin{definition}\label{definition:bk-type}
  A Borel data $(P_{\eta},J_{\eta})$ satisfying $P_{v_{0,+}}=P$ is said to be of Biran-Khanevsky type provided that:
  \begin{itemize}
  \item $J_{\eta}=J_{S}$ (SFT type) on the region $r\ge 1$,
  \item $P$ is related to $Q$ under the projection $L\to \bar{L}$.
  \end{itemize}
\end{definition}
Working in a Liouville manifold opens the possibility of neck stretching: let us say that $J_{\eta}'$ is obtained from $J_{\eta}$ by \emph{neck-stretching} with parameter $s$ provided that:
\begin{equation}\label{eq:neck-stretching}
  J_{\eta}'(z)=(\d\rho_{s,z})^{-1}J_{\eta}(\rho_{s}(z))\d\rho_{s,z}\text{ where }\rho_{s}\text{ is the time $s$ Liouville flow.}
\end{equation}
There are two key lemmas concerning such data:
\begin{lemma}\label{lemma:neck-stretching}
  Let $(P_{\eta},J_{\eta})$ be of Biran-Khanevsky type. After sufficiently large neck-stretching of $J_{\eta}$, all solutions $w$ of the equivariant differential equation \S\ref{sec:defin-diff} with $\mathrm{vdim}(w)=0$, see \eqref{eq:virtual-dimension-differential}, have the property that all pearls $u_{i}$ remain a minimum positive distance away from the skeleton,\footnote{The skeleton is the complement of the symplectization end.} and lie in the region where $J_{\eta}=J_{S}$, and therefore can be projected down to $J$-pearls on $\bar{L}$.
\end{lemma}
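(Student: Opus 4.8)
The plan is to argue by contradiction using SFT-style compactness for the neck-stretched almost complex structures $J_\eta$, exploiting monotonicity to control the Maslov indices. Suppose for a sequence of neck-stretching parameters $s_n \to \infty$ there are solutions $w_n$ of the equivariant differential equation with $\mathrm{vdim}(w_n)=0$ having a pearl $u_{n,i}$ whose image either comes arbitrarily close to the skeleton or enters the region where $J_\eta \ne J_S$. Because $\mathrm{vdim}(w_n)=0$, monotonicity furnishes the usual a priori bounds: $\dim\mathscr{P}$, the total Maslov number $\mu(w_n)$, and hence the number of pearls $k$ are all uniformly bounded. The space of flow lines $\mathscr{P}$ and the combinatorial type can therefore be assumed fixed along the sequence. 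One then applies Gromov--Floer compactness in the stretched limit: each pearl converges (after passing to a subsequence and removing neighborhoods of bubbling/breaking) to a punctured holomorphic building, with the building having a main level mapping to $\bar L \subset M$, intermediate levels in the symplectization $SY$ asymptotic to closed Reeb orbits, and possibly sphere/disk bubbles. The key point is that the building captures the whole of $w_n$, so the total Maslov class is preserved in the limit.

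The heart of the argument is then an index/monotonicity count. First I would observe that, because $\pi_2(\Omega,\partial\Omega)=0$ and $\bar L$ has minimal Maslov number at least $2$, every non-constant component of the limiting building has strictly positive Maslov or first Chern contribution, and any non-trivial symplectization level (a cylinder between Reeb orbits, or a component with a negative puncture asymptotic to a non-trivial Reeb orbit) contributes a strictly positive amount to a combination of the symplectic area and Reeb periods --- in particular a non-trivial stretching component ``costs'' positive virtual dimension. Quantitatively, if a building with at least one genuine symplectization level (equivalently: a pearl that entered the neck region deeply enough to break off a Reeb chord/orbit level) appeared in the limit, the underlying reduced configuration on $\bar L$ would have $\mathrm{vdim} \le -1$, contradicting the regularity of the data $(Q,J)$ for the reduction (which follows from the Biran--Khanevsky-type hypothesis: $P$ is related to $Q$ and $J_\eta = J_S$ near $\partial\Omega$). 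Thus no such level can form, which forces the pearls to stay a definite distance from the skeleton and inside $\{J_\eta = J_S\}$ for $n$ large --- and then the projection $SY \to Q$, which is $(J_S,J)$-holomorphic by definition of $J_S$, carries the pearls to honest $J$-holomorphic pearls on $\bar L$.

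Concretely, the steps in order are: (1) set up the contradiction hypothesis and extract the uniform bounds on $\dim\mathscr{P}$, $\mu(w_n)$, $k$ from $\mathrm{vdim}(w_n)=0$ and monotonicity; (2) invoke the (stretched) Gromov--Floer compactness theorem to get convergence to a holomorphic building, being careful that the Borel parameter $\pi_n \in \mathscr{P}$ only breaks into a Morse-type fibre product as in \S\ref{sec:grom-floer-comp}; (3) do the bookkeeping of virtual dimensions, showing that each non-constant symplectization level, each bubble, and each extra breaking strictly lowers the virtual dimension of the remaining reduced-and-deformed configuration on $\bar L$; (4) conclude that a building with a genuine neck level would produce a reduced pearl configuration of negative virtual dimension, contradicting regularity of the reduction data; (5) deduce that for $n$ large the pearls avoid a fixed neighborhood of the skeleton and remain in $\{r \ge 1\}$ where $J_\eta = J_S$, and project down via $SY \to Q$. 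I expect the main obstacle to be step (3): carefully accounting for the Fredholm indices of punctured curves in the symplectization --- including the contributions of the asymptotic Reeb orbits (which are all non-constant and come in Morse--Bott families from the free $\R/\Z$-action, so one must use the correct Morse--Bott index conventions) --- and matching them against the virtual dimension bookkeeping of the pearl complex, so that ``one genuine neck level'' is shown to cost at least one unit of virtual dimension. The fact that the Reeb flow is $1$-periodic and $\pi_2(\Omega,\partial\Omega)=0$ should make these index computations clean, but they need to be done with care.
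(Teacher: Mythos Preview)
Your overall strategy matches the paper's: set up a contradiction, extract an SFT building in the stretched limit, and show that any genuine lower level forces the top-level reduced pearl trajectory $\bar{w}$ into negative virtual dimension. The difference lies in how step (3) is executed. You anticipate computing Fredholm indices of punctured curves in the symplectization with Morse--Bott asymptotics, and you flag this as the main obstacle. The paper bypasses this entirely with a direct area argument. The building has three types of levels: a top level projecting to $\bar{w}$ in $M$; middle levels in $SY$ which project to non-constant holomorphic spheres in $M$ (hence positive $c_1$); and \emph{deepest} levels, which are non-constant planes $v:\C\to W$ for the original (un-stretched) $J_\eta$. For these deepest levels one uses $\pi_2(\Omega,\partial\Omega)=0$ to homotope $v$ rel its intersection with $\{r=1\}$ into a map $v_1$ landing in $SY$, and then a Stokes computation with $\alpha=\lambda/r$ shows $\int \bar{v}_1^*\omega_M = \int v_1^* d\alpha > 0$. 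Monotonicity of $M$ (not just of $\bar{L}$) then gives $c_1(\bar{v}_1)>0$. Thus every middle or deepest level strictly lowers $\mu(\bar{w})$ relative to $\mu(w_n)$, and the contradiction follows without any punctured-curve index theory.

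So your proposal is correct, but the paper's route is shorter and avoids exactly the difficulty you identified; the trade-off is that the paper's argument uses monotonicity of the ambient reduction $M$ in a seemingly essential way, whereas an index-theoretic argument of the kind you sketch might in principle work under weaker hypotheses.
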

\begin{proof}
  See \cite[Proposition 5.1]{biran-khanevsky-CMH-2013}. We recall the argument in \S\ref{sec:proof-of-neck-stretching}.
\end{proof}

Henceforth we assume our data is sufficiently neck stretched so that Lemma \ref{lemma:neck-stretching} applies. The next two important results we will require from \cite{biran-khanevsky-CMH-2013} are proved in \S\ref{sec:proof-of-unique-lifting} and \S\ref{sec:lift-pearl-traj}; see \cite[\S 7]{biran-khanevsky-CMH-2013}.

\begin{lemma}[Unique lifting of disks]\label{lemma:unique-lifting}
  Let $J_{S}$ be the SFT type almost complex structure on $SY$, and let:
  \begin{equation*}
    v:(D,\bd D)\to (M,\bar{L})
  \end{equation*}
  be a $J$-holomorphic disk. Then there is a unique lift $u:(D,\bd D)\to (SY,L)$ up to the Reeb flow (i.e., once the lift of any point $z\in \bd D$ is chosen in the circular fibre above $z$ in the level $r=1$). 
\end{lemma}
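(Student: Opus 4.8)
The plan is to recast the lifting problem in terms of the Hermitian line bundle attached to the prequantum circle bundle $Y\to M$ and then to solve a Dirichlet problem on the disk, following \cite[\S7]{biran-khanevsky-CMH-2013}. First I would set up the bundle picture. The contact form $\alpha=\lambda|_{Y}$ is a connection $1$-form for the principal circle bundle $\pi_{Y}\colon Y\to M$ (its Reeb field generates the $\R/\Z$-action, and $d\alpha$ is basic, descending to the reduced symplectic form on $M$). Accordingly $SY=Y\times\R_{>0}$ is identified with the complement of the zero section in the associated Hermitian line bundle $\mathcal{L}=Y\times_{S^{1}}\C$, the radial coordinate $r$ becoming the Hermitian norm. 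Under this identification $J_{S}$ agrees with the almost complex structure on $\mathcal{L}\setminus 0$ determined by $J$ on $M$ and the connection $\alpha$: along the fibres both restrict to the standard structure on $\C^{*}$ (the defining relation $J_{S}Z=R$ is precisely $J_{\mathrm{std}}(r\partial_{r})=\partial_{\theta}$), and both have horizontal distribution $\xi=\ker\alpha$, on which $J_{S}$ is the pullback of $J$ by construction. Consequently a $J_{S}$-holomorphic lift of $v$ is exactly a holomorphic section of the pulled-back bundle $v^{*}\mathcal{L}\to D$, where the holomorphic structure is the operator $\bar\partial_{\nabla}$ coming from the pulled-back connection; this is an honest holomorphic structure because $\bar\partial_{\nabla}^{2}=F^{0,2}(\nabla)$ vanishes on the one-dimensional complex manifold $D$.

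Next I would trivialize and translate the boundary condition. Since $H^{2}(D;\Z)=0$ the bundle $v^{*}\mathcal{L}$ is topologically trivial, and writing $\bar\partial_{\nabla}=\bar\partial+A$ with $A\in\Omega^{0,1}(D)$ in a smooth frame and solving the inhomogeneous Cauchy--Riemann equation $\bar\partial h=A$ on the disk produces a global holomorphic frame; thus $v^{*}\mathcal{L}\simeq D\times\C$ holomorphically. In this frame a holomorphic section of $v^{*}(SY)=v^{*}\mathcal{L}\setminus 0$ is simply a holomorphic map $f\colon D\to\C^{*}$, the compatibility $\pi\circ u=v$ being automatic. Since $v(\bd D)\subset\bar{L}$ and $L$ is precisely the level set $\{r=1\}$ of the bundle over $\bar{L}$, the condition $u(\bd D)\subset L$ reduces to $r\circ u\equiv 1$ on $\bd D$, i.e. to $\abs{f(z)}=g(z)$ for $z\in\bd D$, where $g$ is a fixed positive function on $\bd D$ (the reciprocal of the norm of the chosen holomorphic frame).

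Finally I would solve this problem. As $D$ is simply connected and $f$ is nowhere vanishing, write $f=e^{G}$ with $G\colon D\to\C$ holomorphic, so that the requirement becomes $\mathrm{Re}\,G=\log g$ on $\bd D$. The harmonic extension of $\log g$ is the unique candidate for $\mathrm{Re}\,G$; a harmonic conjugate exists on $D$ and is unique up to an additive real constant; hence $f=e^{G}$ exists and is unique up to multiplication by a unimodular constant, which under the above identification is exactly the Reeb flow. Prescribing the value $u(z_{0})$ at a single boundary point $z_{0}\in\bd D$ fixes $f(z_{0})$ (automatically of modulus $g(z_{0})$) and hence the constant, which yields the asserted existence and uniqueness.

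The step I expect to require the most care is the first one: identifying $(SY,J_{S})$ with $\mathcal{L}\setminus 0$ equipped with its connection-induced almost complex structure, and verifying that the $J_{S}$-holomorphic equation for a lift is literally the linear equation $\bar\partial_{\nabla}f=0$ with $\bar\partial_{\nabla}^{2}=0$. Once this is in place, the remaining ingredients --- holomorphic triviality of line bundles over a disk and unique solvability of the Dirichlet problem --- are classical.
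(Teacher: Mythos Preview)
Your proposal is correct and follows essentially the same route as the paper. Both arguments pull back the $\C^{*}$-bundle $SY\to M$ along $v$, write a candidate lift as $e^{f}$ times a fixed section, and reduce the $J_{S}$-holomorphicity equation to an inhomogeneous Cauchy--Riemann problem on the disk whose solution space is one-dimensional (the imaginary constants, i.e.\ the Reeb flow). The only cosmetic difference is the order of operations: you first produce a holomorphic frame by solving $\bar\partial h=-A$ without boundary condition and then solve the Dirichlet problem $\mathrm{Re}\,G=\log g$ on $\partial D$, whereas the paper picks from the outset a smooth section already satisfying the boundary condition and solves a single inhomogeneous $\bar\partial$-problem with the totally real boundary condition $f|_{\partial D}\in i\R$.
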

\begin{lemma}\label{lemma:BK-is-regular}
  For generic $\theta$, and sufficiently neck stretched $J_{\eta}$, the data $(P_{\eta},J_{\eta})$ is regular in the sense of Definition \ref{definition:regular-b-data}.
\end{lemma}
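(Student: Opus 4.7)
The plan is to exploit the projection $L \to \bar L$ to reduce the equivariant regularity upstairs to the non-equivariant regularity of $(Q,J)$ on $\bar L$, with the remaining fibre directions absorbed by genericity of the finite-dimensional parameter $\theta$. By Lemma \ref{lemma:neck-stretching}, after sufficient neck-stretching every (nodal) equivariant pearl trajectory $w$ with $\mathrm{vdim}(w)\leq -\mathfrak{n}$ has all its pearls $u_\nu$ contained in the symplectization region where $J_\eta = J_S$. By Lemma \ref{lemma:unique-lifting}, each such $u_\nu$ uniquely covers a $J$-holomorphic disk $\bar u_\nu:(D,\partial D)\to(M,\bar L)$. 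Since the Biran-Khanevsky hypothesis forces $P_\eta$ to project to a family $Q_\eta$ agreeing with $Q$ at the positive pole, the $P_{\pi(s)}$-flow segments between consecutive pearls project to $Q_{\pi(s)}$-flow segments, and hence the projected configuration $\bar w$ is itself a (non-equivariant) pearl trajectory for $(Q_\eta,J_\eta)$ on $\bar L$.

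Next I would relate the two linearizations via the short exact sequence of boundary pairs
\begin{equation*}
    0 \longrightarrow u_\nu^*(T^{\mathrm{vert}},\langle R\rangle) \longrightarrow u_\nu^*(T(SY),TL) \longrightarrow \bar u_\nu^*(TM,T\bar L) \longrightarrow 0,
\end{equation*}
which is preserved by the Cauchy-Riemann operator because $J_S$ respects the splitting and $J_S Z = R$. The quotient operator is $D_{\bar u_\nu}$, while the vertical operator is the standard Cauchy-Riemann operator on $(D,\partial D)\to(\mathbb{C},\mathbb{R})$, which has index one, one-dimensional kernel (the Reeb rotation of the lift), and is automatically surjective. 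Assembling this over all pearls and imposing the incidence constraints, $D_w$ is surjective if and only if $D_{\bar w}$ is surjective and a certain finite-dimensional \emph{vertical incidence operator}---controlled by the holonomy of $P_{\pi(s)}$ along the flow segments between consecutive pearls---is surjective.

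Surjectivity of $D_{\bar w}$ follows from a standard Biran-Cornea Sard-Smale argument applied to the parametric family $(Q_\eta,J_\eta)$ on $\bar L$: the regularity of $(Q,J)$ at the positive pole is built into the hypothesis, and for other $\eta$ one has freedom to perturb $Q_\eta$ and $J_\eta$ away from the poles. Surjectivity of the vertical incidence operator is handled by the parameter $\theta$: near each critical point $z$ of $Q$, the fibre rotation incurred by flowing through the corresponding small disk is an explicit integral of the fibre perturbation $V_{z,\epsilon,\theta}$ from \eqref{eq:formula-for-fibre}, and depends non-trivially on the components of $\theta$; for generic $\theta$ these fibre holonomies span the vertical incidence space. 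Combining the two factors yields surjectivity of $D_w$. Nodal solutions and the Maslov-$2$ clause of Definition \ref{definition:regular-b-data} are handled by applying the same reduction to the corresponding moduli spaces on $\bar L$.

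The main obstacle is this vertical incidence analysis: the Biran-Khanevsky constraint $J_\eta=J_S$ on the symplectization eliminates the usual freedom to perturb the almost complex structure where the pearls live, so the required fibre-direction transversality must be extracted entirely from the finite-dimensional parameter $\theta$ together with equivariant perturbations of $P_\eta$ in the fibre direction away from the poles. Verifying that these comparatively meagre perturbations are genuinely rich enough to transversalize every (nodal) equivariant moduli space simultaneously is the delicate point, and is essentially the content of the argument in \cite[\S 7]{biran-khanevsky-CMH-2013} adapted to the equivariant setting.
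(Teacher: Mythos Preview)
Your overall strategy matches the paper's: project to $\bar L$ via Lemmas \ref{lemma:neck-stretching} and \ref{lemma:unique-lifting}, split $D_w$ upper-triangularly into a horizontal part governed by $D_{\bar w}$ and a vertical $(\C,\R)$-valued part, and use the assumed regularity of $(Q,J)$ for the former. The paper organizes this into cases according to where $\bar w$ lands (in $\mathscr{M}_d$ for $d=-1,0,1$) and the value of $\dim\mathscr{P}$; the $\dim\mathscr{P}=0$ cases are indeed non-equivariant and referred to \cite{biran-khanevsky-CMH-2013}. Two corrections to your setup: in BK-type data the base pseudogradient is the \emph{fixed} $Q$ for every $\eta$ (only the fibre component of $P_\eta$ varies), and $J_\eta\equiv J_S$ wherever the pearls live, so there is no freedom to perturb either a family $Q_\eta$ or $J_\eta$ there---horizontal regularity comes purely from the standing assumption that $(Q,J)$ is very regular.

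The divergence is in the vertical argument for the genuinely equivariant case $y(z_-)\to x(z_+)$ with $\dim\mathscr{P}=1$. You leave surjectivity of the vertical incidence operator as the acknowledged ``delicate point'', to be extracted from $\theta$ and fibre perturbations of $P_\eta$. The paper does not argue this way. Instead it proves the vertical \emph{kernel} of $D_w$ is zero by backward propagation: the trajectory terminates at the fibre-attractor $x(z_+)$, so any purely vertical coincident variation must vanish at $u_k(+\infty)$; unique lifting (Lemma \ref{lemma:unique-lifting}) then forces it to vanish at $u_k(-\infty)$; the coincidence constraint transports this to $u_{k-1}(+\infty)$; and so on back to the start. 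With the vertical kernel zero, the differential of the projection $w\mapsto(\bar w,\pi)$ injects $\ker D_w$ into the tangent space of a codimension-one locus $\mathscr{R}(z_-,z_+)\subset\mathscr{M}_0(z_-,z_+)\times\mathscr{P}$, cut out transversally by a generic choice of the extension $P_\eta$ (Sard--Smale), whose dimension equals $\mathrm{index}(D_w)$. Hence $\dim\ker D_w\le\mathrm{index}(D_w)$ and $D_w$ is surjective. Genericity of $P_\eta$ is used only to make $\mathscr{R}$ a manifold, not to kill a vertical cokernel directly; the parameter $\theta$ plays no role in this case beyond what it already does in the non-equivariant cases.
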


Turning now to the equivariant differential, let us set up some notation: if $P=P_{\epsilon,\theta}$, then the chain complex is:
\begin{equation*}
  \mathrm{CM}(P)=\mathrm{CM}(Q)\oplus \mathrm{CM}(Q)[-1];
\end{equation*}
and above each generator $z\in \mathrm{CM}(Q)$, recall there are two generators:
\begin{itemize}
\item $y(z)$ with $\mathrm{ind}(y(z))=\mathrm{ind}(z)+1$,
\item $x(z)$ with $\mathrm{ind}(x(z))=\mathrm{ind}(z)$.
\end{itemize}
This induces a splitting of the total complex:
\begin{equation*}
  \mathrm{CM}(P)\otimes \mathbf{k}[x^{-1},x]]\otimes \mathbf{k}[e]\simeq (\mathrm{CM}(Q)\oplus \mathrm{CM}(Q))\otimes \mathbf{k}[q^{-1},q]]\otimes \mathbf{k}[e]
\end{equation*}

Our main computational theorem in this section is proved in \S\ref{sec:comp-diff}.
\begin{theorem}\label{theorem:differential}
  For generic $\theta$ and generic extension $P_{\eta}$ the equivariant differential computed using sufficiently neck stretched $(P_{\eta},J_{\eta})$ takes the form:
  \begin{equation}
    d_{eq}=\begin{bmatrix} d & e^2+F\\
      0 & d
    \end{bmatrix}
  \end{equation}
  where $d$ is the quantum differential on $\mathrm{CM}(Q)\otimes \mathbf{k}[q^{-1},q]]$, and $F$ is an endomorphism of $\mathrm{CM}(Q)\otimes \mathbf{k}[q^{-1},q]]$  that agrees with multiplication with the Floer-Euler class on quantum cohomology (see \S\ref{sec:biran-khanevnsky-floer-gysin-sequence}). Both $d$ and $F$ are extended by linearity over $\mathbf{k}[e]$.
\end{theorem}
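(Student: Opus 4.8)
The strategy is to compute $d_{eq}$ by exploiting the two geometric inputs from Biran--Khanevsky data: (i) after neck-stretching, every rigid pearl trajectory lies in the region where $J_\eta = J_S$, so it projects to a $J$-pearl on $\bar L$ (Lemma \ref{lemma:neck-stretching}), and (ii) a $J$-pearl on $\bar L$ lifts uniquely to an $L$-pearl up to the Reeb action (Lemma \ref{lemma:unique-lifting}). The first thing I would establish is the block-triangular shape. In the splitting $\mathrm{CM}(P) = \mathrm{CM}(Q)\oplus \mathrm{CM}(Q)[-1]$, the two generators over a zero $z$ of $Q$ are $x(z)$ (lower) and $y(z) = $ the image of $x(z)$ under the fibrewise $+$-translation, with $\mathrm{ind}(y(z)) = \mathrm{ind}(x(z))+1$. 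A rigid equivariant trajectory from $x_-$ to $x_+$ projects to a rigid object downstairs of the same virtual dimension; by the degree shift, the relative position of the endpoints in the fibre is constrained, and one checks the $x$-column can only hit $x$-generators (never $y$) — this gives the strictly-lower-triangular part being zero and forces both diagonal blocks to be the \emph{same} operator $d$, which one then identifies with the quantum differential of $\bar L$ by the lifting correspondence (each downstairs pearl trajectory lifts, and the count matches over $\mathbf{k} = \Z/2\Z$; here one uses that the $e$-contributions to the diagonal must vanish because the Borel parameter $\pi$ in a diagonal trajectory can be taken constant, exactly as the $e=0$ specialization recovers the non-equivariant differential per Theorem \ref{theorem:diff-structural}).

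Next I would analyze the off-diagonal (upper-right) block, which maps $x$-generators to $y$-generators and raises index by one relative to the diagonal; writing it as $e^2 d_{0,2} + q\,d_{1,0}^{\text{off}} + \cdots$ one argues, again via Lemma \ref{lemma:neck-stretching} and the dimension formula \eqref{eq:virtual-dimension-differential}, that only two types of rigid configurations contribute. The $e^2$-term comes from trajectories with $\pi\in\mathscr{P}^{j,\pm}$ of dimension $j=2$ and no holomorphic pearls ($\mu(w)=0$): these are the purely Morse-theoretic, Seidel--Smith-type contributions of the fibre circle's $\Z/2\Z$-action on the two-generator complex over each $z$, and a direct local computation (the circle fibre with the standard height-function pseudogradient, which is precisely the model in \eqref{eq:formula-for-fibre}) shows this term is the identity coefficient of $e^2$ — this is where the ``$e^2$'' in $e^2+F$ is born, and it is the standard computation that the $\Z/2\Z$-equivariant differential of a free circle with two critical points is multiplication by $e^2$ on the uncompleted complex. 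The $q^{\mu/N}$-terms with $\mu>0$ come from trajectories with $\dim\mathscr{P}=0$ carrying holomorphic pearls whose total Maslov number is positive and whose endpoints sit on opposite sides of the fibre: these are exactly the configurations that Biran--Khanevsky use to define the Floer--Euler class $F\in \mathrm{QH}^2(\bar L)$ (counting $J$-pearls on $\bar L$ weighted by the ``winding'' of the unique lift around the fibre), and the lifting lemma gives a bijection between these rigid equivariant trajectories and the pearl configurations defining $F$, matching the counts mod $2$.

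Finally I would assemble the pieces: the off-diagonal block is $e^2\cdot\mathrm{id} + (\text{the operator computing }F) = e^2 + F$ as a map $\mathrm{CM}(Q)\otimes\mathbf{k}[q^{-1},q]]\otimes\mathbf{k}[e]\to$ itself, and check that there are no further contributions — in particular no mixed terms $e^2 q^i$ with $i>0$ (ruled out by the virtual-dimension count once $\dim\mathscr{P}=2$ already saturates the degree budget for the off-diagonal shift) and no term landing in degrees below. The identification of the $q$-terms with genuine quantum multiplication by the cohomology class $F$ (rather than merely a chain-level endomorphism) follows from comparing with the Biran--Khanevsky construction via the Floer--Gysin machinery of \S\ref{sec:biran-khanevnsky-floer-gysin-sequence}; one may also verify $d_{eq}^2=0$ directly as a consistency check, which forces $[d, e^2+F]=0$, i.e. $F$ is a chain map, confirming it represents a quantum cohomology class. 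I expect the main obstacle to be the careful bookkeeping in the off-diagonal $q$-term: one must show that the unique-lifting correspondence (Lemma \ref{lemma:unique-lifting}) sends rigid equivariant pearl trajectories \emph{with the fibre-endpoint-flip} exactly onto the pearl configurations Biran--Khanevsky count for $F$, including matching the Maslov/winding grading and ensuring compactness/gluing behave so that the boundary counts agree — essentially porting \cite[\S7]{biran-khanevsky-CMH-2013} into the equivariant/pearl-with-$\mathscr{P}$ framework set up in \S\ref{sec:equiv-diff}, and checking the neck-stretched almost complex structure $J_S$ makes all relevant moduli spaces regular (Lemma \ref{lemma:BK-is-regular}).
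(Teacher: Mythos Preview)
Your overall strategy matches the paper's: project rigid equivariant trajectories to $\bar L$ via Lemma~\ref{lemma:neck-stretching}, use the unique lifting Lemma~\ref{lemma:unique-lifting}, and compare virtual dimensions. The block-triangular shape, the identification of the diagonal with $d$ at the $e^0$ level, and the $e^2$ contribution via the equivariant Morse theory of the fibre circle are all handled correctly in outline. (One cosmetic point: the upper-right block goes from $y$-generators to $x$-generators, not the other way; your index bookkeeping is off by a sign here, though the conclusion is unaffected.)

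There is, however, a genuine gap: you do not address the $e^1$ terms. Write the matrix entries as $A,B,C,D$ with $A=\langle d_{eq}(x),x'\rangle$, $C=\langle d_{eq}(y),x'\rangle$, etc., and expand each in powers of $e$. Your claim that the diagonal $e$-contributions vanish ``because the Borel parameter $\pi$ can be taken constant'' is incorrect: the $e^1$ term $A_1$ counts trajectories with $\pi\in\mathscr{P}^{1,\pm}$, which are genuinely non-constant flow lines on $S^\infty$. By index, such a trajectory projects to a constant ($z=z'$) and is a Morse continuation in the fibre circle; the paper shows the $\mathscr{P}^{1,+}$ and $\mathscr{P}^{1,-}$ contributions are each $1$ and hence cancel over $\Z/2\Z$. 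Without this cancellation argument, the diagonal is not yet $d$.

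More seriously, you entirely miss the off-diagonal $e^1$ term $C_1$. A trajectory from $y(z_-)$ to $x(z_+)$ with $\dim\mathscr{P}=1$ and $\mathrm{vdim}=0$ projects to a pearl trajectory $\bar w$ on $\bar L$ with $\mathrm{vdim}(\bar w)=0$ --- i.e., exactly a rigid trajectory contributing to the quantum differential on $\bar L$. Index considerations alone do \emph{not} kill this; a priori $C_1$ looks like a copy of $d$. The paper handles this by a cobordism argument: one interpolates between the pseudogradients $P_{\pi_+(s)}$ and $P_{\pi_-(s)}$ through the family $P_{\epsilon,\theta+0.5\lambda}$, $\lambda\in[0,1]$, and shows the parametric moduli space over each $\bar w$ is a compact $1$-manifold with boundary $C_{1,+}\sqcup C_{1,-}$, forcing $C_1=C_{1,+}+C_{1,-}=0$. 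This argument is the most delicate part of the proof and is absent from your plan; your exclusion of ``mixed terms $e^2q^i$ with $i>0$'' addresses only $C_j$ for $j\ge 2$.
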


In the final subsection \S\ref{sec:proof-theorem-main} we use Theorem \ref{theorem:differential} to prove Theorem \ref{theorem:main-computation}.

\subsection{The Morse-Smale property}
\label{sec:morse-smale-property}

We prove Lemma \ref{lemma:morse-smale}. Each flow line $\bar{\gamma}$ of the underlying pseudogradient $Q$ lifts to a $\R/\Z$-family of flow lines $\gamma$ for the pseudogradient $P_{\epsilon,\theta}$; one simply lifts the initial point $\bar{\gamma}(0)$. Suppose first that $z_{-}\ne z_{+}$. The space of lifts lying over $\mathscr{P}(z_{-},z_{+})$ is a circle bundle $\mathscr{L}(z_{-},z_{+})\to \mathscr{P}(z_{-},z_{+})$, defined by pulling back $L\to \bar{L}$ via $\bar{\gamma}\mapsto \bar{\gamma}(s_{0})$ where $s_{0}$ is the first time when $\bar{\gamma}(s_{0})$ exits the disk $D(z_{-})$.

The local model \eqref{eq:formula-for-fibre} implies:
\begin{itemize}
\item there is a unique lift $\gamma_{y}$ of $\bar{\gamma}$ starting at $y(z_{-})$,
\item there is a unique lift $\gamma_{x}$ of $\bar{\gamma}$ ending at $x(z_{+})$.
\end{itemize}
These can be interpreted as smooth sections of $\mathscr{L}(z_{-},z_{+})\to \mathscr{P}(z_{-},z_{+})$. In particular, we can compare the two lifts and obtain an $\R/\Z$-valued function $t_{0}$; prosaically, $\gamma_{y}(s)=R_{t_{0}}\gamma_{x}(s)$ for some $t_{0}\in \R/\Z$, where $R$ acts by the rotation in the fiber. This produces a smooth map:
\begin{equation*}
  f_{z_{-},z_{+}}:\set{\bar{\gamma}\text{ joining }z_{-}\text{ to }z_{+}}\to \R/\Z\text{ given by }f_{z_{-},z_{+}}(\bar{\gamma})=t_{0}.
\end{equation*}
\textbf{Claim A.} {\itshape For a dense and open set of $\theta$, it holds that $f_{z_{-},z_{+}}$ has zero as a regular value for each pair $z_{-}\ne z_{+}$.}

This follows immediately from Sard's theorem \cite{milnor-book-topology-diff-viewpoint-1965}, as rotating $\theta$ at $z_{-}$ changes $f_{z_{-},z_{+}}$ by addition of a constant (here we assume that the disks $D_{z_{-}}$ is sufficiently small so that any flow line starting at $z_{-}$ never re-enters $D_{z_{-}}$ after it leaves). Next:

\textbf{Claim B.} {\itshape If $f_{z_{-},z_{+}}$ has zero as a regular value for each pair $z_{-}\ne z_{+}$, then $P_{\epsilon,\theta}$ is Morse-Smale.}

The technical definition of Morse-Smale requires consideration of the linearized operators, which we do at the end of the proof. First we show that the moduli spaces are manifolds of the correct dimension.

Inside of $\mathscr{P}(z_{-},z_{+})$ there is the submanifold $\mathscr{R}(z_{-},z_{+})\subset \mathscr{P}(z_{-},z_{+})$ cut out by $f_{z_{-},z_{+}}^{-1}(0)$. In fact, we have constructed two sections $\gamma_{x}$ and $\gamma_{y}$ of the circle bundle $\mathscr{L}(z_{-},z_{+})$, and $\mathscr{R}(z_{-},z_{+})$ is identified with their transverse intersection $\gamma_{x}\cap \gamma_{y}$. By inspection, there are isomorphisms:
\begin{equation}\label{eq:isomorphisms-describing-flow-spaces}
  \begin{aligned}
    &\gamma_{x}:\mathscr{P}(z_{-},z_{+})- \mathscr{R}(z_{-},z_{+})\to \mathscr{P}(x(z_{-}),x(z_{+}))\\
    &\gamma_{y}:\mathscr{P}(z_{-},z_{+})- \mathscr{R}(z_{-},z_{+})\to \mathscr{P}(y(z_{-}),y(z_{+}))\\
    &\gamma_{x}=\gamma_{y}:\mathscr{R}(z_{-},z_{+})\to \mathscr{P}(y(z_{-}),x(z_{+}))\\ 
    &\id:(\mathscr{L}(z_{-},z_{+})-(\gamma_{x}\cup \gamma_{y}))\to \mathscr{P}(x(z_{-}),y(z_{+}));\\
  \end{aligned}
\end{equation}
in the final line, the domain is the total space of a circle bundle with the images of two sections removed. Each space of trajectories is identified with a manifold of the correct dimension. To complete the proof of the claim, we linearize the problem and compute the kernel of the linearized operator.

Let $\gamma$ be a flow line lifting $\bar{\gamma}$. There is a canonical splitting using the horizontal distribution $\xi\cap TL$ as a (flat) Ehresmann connection:
\begin{equation*}
  TW^{1,p}(\R,L)_{\gamma}\simeq W^{1,p}(\R,\R)\oplus TW^{1,p}(\R,\bar{L})_{\bar{\gamma}}.
\end{equation*}

\textbf{Claim C.} {\itshape With respect to this splitting, the linearized operator takes the form:
\begin{equation*}
  D_{\gamma}=\left[
  \begin{matrix}
    D_{\R}&A\\
    0&D_{\bar{\gamma}}\\
  \end{matrix}
  \right]
\end{equation*}
where $D_{\R}$ is a Fredholm operator of index $0,1,-1$ depending on whether the asymptotics are of $x$ type or $y$ type. To be precise, the index is:
\begin{enumerate}[label=(\roman*)]
\item\label{ms-1} $1$ if $\gamma\neq \gamma_{x}$ and $\gamma\neq \gamma_{y}$,
\item\label{ms-2} zero if $\gamma=\gamma_{x}$ or $\gamma=\gamma_{y}$, but not both, and,
\item\label{ms-3} $-1$ if $\gamma=\gamma_{x}=\gamma_{y}$.
\end{enumerate}
}

Let us note that the claim about the Fredholm index of $D_{\R}$ follows from the general properties of Morse theory, once we prove that the linearization $D_{\Gamma}$ splits in the stated fashion. To prove the splitting, we briefly recall how the linearization is defined, following \cite[\S4]{cant-thesis-2022}.

Pick a coordinate chart $x_{1},\dots,x_{n-1}$ on the reduced space $\bar{L}$, and lift this to a leaf of $\xi\cap TL$, which is a flat distribution. Then we define an additional $\R/\Z$-valued coordinate $x_{0}$ by rotating the point on the chosen leaf. By our construction, the vector field $P_{\epsilon,\theta}$ appears in the form:
\begin{equation*}
  V(x_{0},x_{1},\dots,x_{n-1})\pd{}{x_{0}}+\sum Q_{i}(x_{1},\dots,x_{n-1})\pd{}{x_{i}}.
\end{equation*}
Then, if $\eta$ points in the $\partial/\partial{x_{0}}$ direction, it holds that:
\begin{equation*}
  D_{\gamma}^{x}(\eta)=\eta'(s)-\pd{V}{x_{0}}\eta(s),
\end{equation*}
where $D^{x}_{\gamma}$ is the local coordinate linearization.\footnote{The local coordinate linearization is computed in the obvious way:
  \begin{equation*}
    D_{\gamma}^{x}(\eta)=\eta'-\lim_{\epsilon\to 0}\epsilon^{-1}\left(\d x P x^{-1}(x\gamma(s)+\epsilon \eta(s))-\d x P(\gamma(s))\right);
  \end{equation*}
  see, e.g., \cite[\S4.2]{cant-thesis-2022}. This is only defined for $\eta$ compactly supported in the open set $\gamma^{-1}(U)$ where $U$ is the domain of the coordinates $x$. These local linearized operators uniquely determine the global linearized operator $D_{\gamma}$ by the relation $\d x\circ D_{\gamma}\circ \d x^{-1}=D^{x}_{\gamma}$.
} The genuine linearized operator is related to the local linearization by $\d x\circ D_{\gamma}\circ \d x^{-1}=D^{x}_{\gamma}$. Then, since $\d x$ respects the splitting, we conclude the claim.

From this splitting, we see that the reduction map takes elements in the kernel of $D_{\gamma}$ into the kernel of $D_{\bar{\gamma}}$. It is clear from basic ODE theory that $\dim \ker D_{R}\le 1$. Thus in case \ref{ms-1} we conclude: $$\dim\ker D_{\gamma}\le \dim \ker D_{\bar{\gamma}}+\dim \ker D_{\R}\le \dim \ker D_{\bar{\gamma}}+1=\mathrm{index}(D_{\gamma}),$$ proving regularity in this case. In case \ref{ms-2} and \ref{ms-3} we differentiate the inverses of the lifting maps in \eqref{eq:isomorphisms-describing-flow-spaces} and thereby prove:
\begin{equation*}
  \dim \ker D_{\gamma}-\mathrm{index}(D_{\gamma})\le \dim \ker D_{\R}.
\end{equation*}
However, in both cases $\dim \ker D_{\R}=0$, by analysis of linear ODEs on $W^{1,p}(\R,\R)\to L^{p}(\R,\R)$ with non-degenerate asymptotics.\footnote{Here we implicitly appeal to structural results in Morse theory. In particular, we know that $D_{\R}$ is represented by $\bd_{s}+S(s)$ where $S(s)$ converges to non-zero numbers at both ends, and the signs of these non-zero numbers can be determined by computing Morse indices. Such ODE can be solved explicitly; see, e.g., \cite[\S2.4]{cant-thesis-2022}.} Thus we have regularity in the cases $z_{-}\ne z_{+}$.

It remains only to treat the case $z_{-}=z_{+}$. In this case the same arguments (considering an upper triangular linearization) work in the same way to prove $D_{\gamma}$ is transverse. This completes the proof of the lemma. \hfill$\square$

\subsection{Neck stretching}
\label{sec:proof-of-neck-stretching}

This section recalls the proof of Lemma \ref{lemma:neck-stretching}. We first show that solutions appearing in the moduli space with $\mathrm{vdim}(w)=0$ must map all of their pearls into the symplectization end $SY$, into the region where $J_{\eta}=J_{S}$, provided that $J_{\eta}$ is sufficiently neck stretched.

To be precise, let us pick a convergent family of pseudogradients $P_{\eta,n}$, and complex structures $J_{\eta,n}$ which are getting more and more neck-stretched, and suppose (in search of a contradiction) that there is a sequence of solutions $w_{n}$ which have pearls intersecting the $1/n$-neighborhood of the skeleton. One shows $w_{n}$ converges to a pearl trajectory where disks are replaced by pseudoholomorphic SFT type buildings. The argument which derives the contradiction is well-explained in \cite[\S5]{biran-khanevsky-CMH-2013}, so we only highlight the salient points:
\begin{itemize}
\item the ``top-level'' building projects to a genuine solution $\bar{w}$ in the reduction $M$, because being asymptotic to a Reeb orbit in the negative end implies the pearls in $\bar{w}$ are punctured disks with removable singularities,
\item the ``middle components'' of the building are non-constant rational holomorphic curves in $SY$ which project to non-constant holomorphic spheres in $M$,
\item the ``deepest components'' of the building are non-constant rational curves in $W$ using the original (non-neckstretched) almost complex structure $J_{\eta}$.
\end{itemize}
Each deepest component is a map $v:\C\to W$. Without loss of generality, let us suppose $v$ is transverse to $r=1$ (otherwise just perturb slightly $r=1+\epsilon$). Then $v^{-1}(\set{r=1})$ is a collection of disjoint circles in $\C$, which bound disks $D_{1},\dots,D_{k}$. If $D_{i}\subset D_{j}$ then we just remove $D_{i}$ from the list.

We can homotope $v$ on the union of the disks $D=D_{1}\cup \dots\cup D_{k}$ so that it takes values in $SY$, by our assumption that $\pi_{2}(\Omega,\bd\Omega)=0$. Denote this deformation by $v_{1}$. It is crucial to set $\alpha=\lambda/r$ and estimate:
\begin{equation}\label{eq:positivity-integral-alpha}
  \begin{aligned}
    \int v_{1}^{*}\d \alpha&=\int_{D^{c}}v^{*}\d \alpha+\int_{D}v_{1}^{*}\d\alpha\\
    &=\int_{D^{c}}v^{*}\d \alpha+\int_{D}v^{*}\d\lambda>0.
  \end{aligned}
\end{equation}
The inequality uses Stokes' theorem and ideas of \cite{BEHWZ}. The significance of the computation is that, by definition of symplectic reduction, the $\omega_{M}$-area of the projection of $v_{1}$, say $\bar{v}_{1}$, is given by:
\begin{equation*}
  \int \bar{v}_{1}^{*}\omega_{M}=\int v_{1}^{*}\d \alpha.
\end{equation*}
Indeed, $\bar{v}_{1}^{*}\omega_{M}$ is computed by projecting $\tilde{v}$ onto the level $r=1$, and then integrating $\d \lambda$ over this projection of $\tilde{v}$. But $\d\lambda=\d\alpha$ on the level $r=1$. It therefore follows from \eqref{eq:positivity-integral-alpha} that $c_{1}(\bar{v})>0$, since the reduction $\bar{v}$ has positive $\omega_{M}$-area (here we use the monotonicity of $M$ in a seemingly crucial way rather than the weaker assumption that $\mu$ is positive on $J$-holomorphic curves).

In this manner we observe that $\mu(\bar{w})$ must be strictly lower than $\mu(w_{n})$, because the deepest components or middle components always consume a positive amount of intersections with the first Chern class. One then concludes from the fact that $w_{n}$ was in a moduli space with $\mathrm{vdim}(w_{n})=0$ (and the fact the minimal Maslov number is at least $2$) that $\bar{w}$ lives in a moduli space with virtual dimension $-1$ or less, which is impossible since $\bar{w}$ needs to have at least one pearl. \hfill$\square$

\subsection{Unique lifting of holomorphic disks}
\label{sec:proof-of-unique-lifting}

We prove Lemma \ref{lemma:unique-lifting}. As in the setting of the previous section, after sufficient neck stretching we may assume that all the relevant pearls lie in the symplectization end $SY\subset W$ (i.e., we use a sufficiently neck-stretched almost complex, so that all pearls lie in the region where the almost complex structures agrees with $J_{S}$). Set $\pi:SY\to M$ to be the reduction map; here $M=SY/\C^{\times}$ is the quotient by the Liouville and Reeb flows. Then $\pi:SY\to M$ is a principal bundle with structure group $\C^{\times}$, and the projection map is $(J_{S},J)$-holomorphic.

Let $\bar{u}:(D,\partial D)\to (M, \bar{L})$ be a holomorphic disk in $M$. The pullback bundle $\bar{u}^{\ast}SY$ admits a section $v$ (probably non-holomorphic) taking values in $\bar{u}^{*}L$ along $\partial D$. To construct a holomorphic lift $u$ of $\bar{u}$ we consider an ansatz of the form:
\begin{equation}\label{eq:conformal-ansatz}
  u=e^{f}v\text{ where }f:(D,\bd D)\to (\C, \R)\text{ is a smooth function},
\end{equation}
and where $e^{f}\in \C^{\times}$ acts on $v$ as explained above.

\begin{lemma}\label{lemma:conformal-ansatz}
  There is unique solution $f$ which renders $u$ holomorphic, up to the additions of imaginary constants.
\end{lemma}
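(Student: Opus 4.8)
The plan is to reduce the requirement that $u=e^{f}v$ be holomorphic to a single inhomogeneous Cauchy–Riemann equation for the scalar function $f$, with a constant totally real boundary condition, and then to apply the standard Riemann–Hilbert index computation.

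First I would exploit the holomorphy of the fibration. By construction $\pi\colon SY\to M$ is a principal $\C^{\times}$-bundle whose action is holomorphic with respect to $(J_{S},J)$ --- this is precisely the content of $J_{S}Z=R$ together with $\pi$ being $(J_{S},J)$-holomorphic. Since $\bar u$ is $J$-holomorphic, the pullback $\bar u^{*}SY\to D$ is a \emph{holomorphic} principal $\C^{\times}$-bundle over the disk, hence holomorphically trivial (one has $H^{1}(D,\mathcal O^{\times})=0$); fix a holomorphic section $v_{0}$. Write the given smooth section as $v=e^{g}v_{0}$, where $g\colon D\to\C$ is the lift through $\exp\colon\C\to\C^{\times}$ of the smooth map $v/v_{0}\colon D\to\C^{\times}$ (possible since $D$ is simply connected). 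Then $u=e^{f+g}v_{0}$, and because the $\C^{\times}$-action is holomorphic and $v_{0}$ is holomorphic, $u$ is $J_{S}$-holomorphic if and only if $f+g$ is holomorphic, i.e.
\begin{equation*}
  \bar\partial f=\mu,\qquad \mu:=-\bar\partial g,
\end{equation*}
a prescribed smooth $(0,1)$-form on $D$. In particular the condition is linear in $f$ and independent of the choice of $v_{0}$ (a different holomorphic frame changes $g$ by an additive holomorphic function, hence leaves $\mu$ unchanged).

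Next I would pin down the boundary condition. The Lagrangian $L\subset SY$ is the restriction of the circle bundle $\partial\Omega\to M$ over $\bar L$, so it is invariant under exactly the Reeb circle $S^{1}\subset\C^{\times}$ (the Liouville flow moves $r$-levels and does not preserve $L$). Since $v(\partial D)\subset\bar u^{*}L$, the lift $u=e^{f}v$ satisfies $u(\partial D)\subset\bar u^{*}L$ if and only if $e^{f}$ takes values along $\partial D$ in that subgroup $S^{1}$, i.e. $f$ takes values along $\partial D$ in its Lie algebra, a fixed totally real line $\Lambda\subset\C$ (in the normalisation used here, the axis appearing in the statement, with $\Lambda$-valued constants acting by the Reeb flow). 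Thus we must solve $\bar\partial f=\mu$ among maps $(D,\partial D)\to(\C,\Lambda)$.

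Finally I would run the Fredholm argument. The operator $f\mapsto\bar\partial f$ on maps $(D,\partial D)\to(\C,\Lambda)$, in suitable $W^{k,p}$-completions, is a real-linear Cauchy–Riemann operator with a \emph{constant} totally real boundary condition, whose boundary Maslov index is $0$; hence it is Fredholm of index $1+0=1$. Its kernel is the space of holomorphic $f\colon D\to\C$ with $f(\partial D)\subset\Lambda$, and by Schwarz reflection any such $f$ extends to a holomorphic function on $\mathbb{C}P^{1}$, hence is constant and lies in $\Lambda$; so the kernel equals $\Lambda$, is one-dimensional, and the operator is therefore surjective. Consequently $\bar\partial f=\mu$ is solvable, and its solution is unique modulo the addition of a constant in $\Lambda$ --- i.e., modulo an imaginary constant, acting by the Reeb flow, as claimed; smoothness of $f$ up to $\partial D$ is the usual elliptic boundary regularity for Cauchy–Riemann operators with totally real boundary conditions. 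Prescribing the value of $u$ at a single boundary point removes the remaining constant, which simultaneously yields Lemma \ref{lemma:unique-lifting}. The one genuinely substantive point is the reduction of the first paragraph --- that the $\C^{\times}$-fibre direction carries the \emph{only} obstruction to holomorphicity (this uses holomorphy of both $\bar u$ and $\pi$) and that the gauge change by $e^{f}$ alters the vertical $\bar\partial$ exactly by $\bar\partial f$; once this is in place, the rest is standard Riemann–Hilbert bookkeeping.
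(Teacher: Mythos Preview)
Your proof is correct and arrives at the same endpoint as the paper: reduce to an inhomogeneous Cauchy--Riemann problem $\bar\partial f=\mu$ for $f:(D,\partial D)\to(\C,i\R)$, then use that this operator is Fredholm of index $1$ with one-dimensional kernel, hence surjective with solutions unique modulo $i\R$.

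The difference is in how the reduction is carried out. The paper stays concrete: it writes the $\C^{\times}$-action explicitly as $e^{a+ib}\cdot p=\rho_{a}\psi_{b}(p)$ via the Liouville and Reeb flows, computes $\partial_{s}u$ and $\partial_{t}u$ directly, decomposes according to the splitting $TSY=\xi\oplus\R Z\oplus\R R$, and observes that the horizontal component of $\bar\partial_{J_{S}}u$ vanishes because $\bar u$ is $J$-holomorphic, leaving only the scalar vertical equations $\partial_{s}a-\partial_{t}b=h_{1}$, $\partial_{s}b+\partial_{t}a=h_{2}$. You instead invoke holomorphic triviality of the pullback bundle $\bar u^{*}SY\to D$ (via $H^{1}(D,\mathcal O^{\times})=0$) to produce a holomorphic frame $v_{0}$, so that holomorphicity of $e^{f+g}v_{0}$ becomes literally $\bar\partial(f+g)=0$. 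Your route is cleaner and packages the ``horizontal part vanishes'' step into the existence of $v_{0}$; the paper's route is more hands-on and makes explicit exactly which geometric feature of $J_{S}$ is being used (namely $J_{S}Z=R$ and preservation of $\xi$ by the two flows). Both buy the same conclusion, and your Schwarz-reflection identification of the kernel is a nice alternative to the paper's appeal to index-plus-kernel-dimension.
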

\begin{proof}
The action of $e^{a+ib}\in \C^{\times}$ on a point $p\in \Omega$ is given by
$$e^{a+ib}\cdot p= \rho_{a}\circ \psi_{b}(p)$$
where $\rho_{a}$ is the time $a$ flow of the Liouville vector field $Z$ and $\psi_{b}$ is the time $b$ flow of the Reeb vector field $R$ (the flows commute so the order is immaterial).

Set $f=a+ib:(D,\partial D)\to (\C, i\R)$ and consider the map $u=e^{f}v$ which coincides with $\rho_{a}\circ \psi_{b}(v)$. Compute
$$\begin{cases}
  \partial_{s}u= d\rho_{a}\circ d\psi_{b}(\partial_s v)+ \partial_s a\cdot Z(u)+\partial_{s} b \cdot R(u)\\
  \partial_{t}u= d\rho_{a}\circ d\psi_{b}(\partial_t v)+ \partial_t a\cdot Z(u)+\partial_{t} b \cdot R(u)
\end{cases}$$
where $d\rho_{a}$ and $d\psi_{b}$ denote the spatial derivatives. Then the desired equation $\partial_{s}u+J_{S}(u)\partial_{t}u=0$ is equivalent to:
\begin{equation}\label{eqn:holomorphic-lift}
  d\rho_{a}d\psi_{b}(\partial_s v+ J \partial_t v)+\left(\partial_s a-\partial_t b\right)\cdot Z(u)+\left(\partial_s b+\partial_t a\right)\cdot R(u)=0.
\end{equation}
Define the connection $\xi$ on the bundle $\pi:SY\to M$ to be the symplectic orthogonal to the fibres. The flows $\rho$ and $\psi$ preserve this connection and $J_{S}$ is preserved under the splitting by virtue of its compatibility with $\omega$. Decompose $\partial_{s}v$ and $\partial_{t}v$ according to $TSY=\xi\oplus \R Z\oplus \R R$. After some manipulation, and because we assume that $\bar{u}$ was holomorphic to begin with, \eqref{eqn:holomorphic-lift} reduces to a system of two equations:
$$\begin{cases}
        \partial_sa-\partial_t b= h_1,\\
        \partial_s b+\partial_t a=h_2.
\end{cases}$$
This is an inhomogenous Cauchy-Riemann problem:
$$\bar{\partial}f=h_1+ih_2.$$
Such a solution exists, one simply recalls that the Cauchy-Riemann operator on maps $f:(D,\partial D)\to (\C, i\R)$ is surjective.\footnote{It is a Fredholm operator of index $1$, and $\dim \ker \bar{\partial}=1$, thus its cokernel vanishes.} The boundary conditions easily prove that $\ker \partial f=i\R$, and hence any other solution is of the form $f+i c$ where $c\in \R$ is a constant, as desired.
\end{proof}

\subsection{Regularity of pearl trajectories}
\label{sec:lift-pearl-traj}

We prove Lemma \ref{lemma:BK-is-regular}. The proof is similar in spirit to the proof in \S\ref{sec:morse-smale-property}. Let us denote by $\mathscr{M}_{d}(z_{-},z_{+})$ the moduli space of pearl trajectories $\bar{w}$ from $z_{-}$ to $z_{+}$ appearing in moduli spaces with $\mathrm{vdim}(\bar{w})=d$ and $\mathfrak{n}=0$. We only consider $d\le 1$. Because we assume $(Q,J)$ is very regular in the sense of Remark \ref{remark:non-eq-regular}, these moduli spaces are manifolds of dimension $d$ (after quotient by an $\R^{k+1}$-action), except in the case $d=-1$ and $k=0$ when the moduli space consists only of the constant trajectories.

We consider pearl trajectories $w=(b,\pi,u_{1},\dots,u_{k})$ for $(P_{\eta},J_{S})$ for the pair $(SY,L)$ with $\mathrm{vdim}(w)=0$ and $\dim \mathscr{P}=j$ which project to pearl trajectories $\bar{w}=(b,\bar{u}_{1},\dots,\bar{u}_{k})$ in $\mathscr{M}_{d}(z_{-},z_{+})$. By the neck-stretching argument in \S\ref{sec:proof-of-neck-stretching}, such trajectories $w$ compute the quantum cohomology differential for sufficiently neck stretched data.

We first enumerate the curves which can appear. We treat the inverse images of $\mathscr{M}_{d}$ for $d=-1,0,1$. The inverse image over $\mathscr{M}_{-1}(z,z)$ can be analyzed directly using Morse theory on the circle. The only trajectories in this inverse image (with $\mathrm{vdim}(w)=0$) are those without pearls and are in components:
\begin{enumerate}[label=(\alph*)]
\item\label{reg-a} $y(z)$ to $x(z)$ with $\dim \mathscr{P}=2$,
\item\label{reg-b} $x(z)$ to $x(z)$ and $y(z)$ to $y(z)$ with $\dim \mathscr{P}=1$.
\end{enumerate}

Next the inverse image over $\mathscr{M}_{0}(z_{-},z_{+})$ consists of three types of trajectories:
\begin{enumerate}[label=(\alph*),resume]
\item\label{reg-c} $x(z_{-})$ to $x(z_{+})$ with $\dim \mathscr{P}=0$,
\item\label{reg-d} $y(z_{-})$ to $y(z_{+})$ with $\dim \mathscr{P}=0$,
\item\label{reg-e} $y(z_{-})$ to $x(z_{+})$ with $\dim \mathscr{P}=1$.
\end{enumerate}
Finally we consider the inverse image over $\mathscr{M}_{1}(z_{-},z_{+})$:
\begin{enumerate}[label=(\alph*),resume]
\item\label{reg-f} $y(z_{-})$ to $x(z_{+})$ with $\dim \mathscr{P}=0$.
\end{enumerate}
The counts of these trajectories are determined in \S\ref{sec:comp-diff}. In this section, we provide the arguments explaining why we can assume these are regular.

\subsubsection{Regularity in cases \ref{reg-a} and \ref{reg-b}}
\label{sec:regularity-cases-a-b}

These cases follow easily from, e.g., the usual Sard-Smale argument as in \cite[\S3.2]{mcduff-salamon-book-2012}. The fact that the vector field $V_{\eta}$ appearing in $P_{\eta}=Q+V_{\eta}$ is arbitrary when $\eta$ is far from poles implies that (provided $\pi$ is not constant) the universal operator obtained by varying $V_{\eta}$ will be surjective. Here we appeal to similar analysis to \S\ref{sec:morse-smale-property}.

\subsubsection{Regularity in cases \ref{reg-c}, \ref{reg-d}, \ref{reg-f}}
\label{sec:regul-cases-c-d}

This occurs solely in the $\dim \mathscr{P}=0$ component, and so is a ``non-equivariant'' phenomenon, and is therefore covered by the work of \cite{biran-khanevsky-CMH-2013}. The argument we give below in \S\ref{sec:regul-cases-e} can be adapted to show the present case, and so we move directly onto the next case for the sake of saving space.

\subsubsection{Regularity in case \ref{reg-e}}
\label{sec:regul-cases-e}

The same argument given in \S\ref{sec:regularity-cases-a-b} works to establish the $k=0$ case, so we assume $k>0$ in this subsection.

The logic is similar to \S\ref{sec:morse-smale-property}. The choice of solution $w$ projecting to a solution $(\bar{w}, \pi)\in \mathscr{M}_{0}(z_{-},z_{+})\times \mathscr{P}$ forms a sort of circle bundle. Since $k>0$, there is always a first pearl $\bar{u}_{1}$ and we trivialize this bundle by evaluation at $\bar{u}_{1}(-\infty)$ and pulling back $\bar{L}\to L$. The fiber over $(\bar{w},\pi)$ is identified with the choice of lift $u_{1}(-\infty)$. There are two canonical sections $w_{x}$ and $w_{y}$ for any $(\bar{w},\pi)$, the lift which ends at $x$ and the lift which starts at $y$, and comparing these sections give a smooth $\R/\Z$ function $f_{z_{-},z_{+}}$.

\textbf{Claim.} {\itshape For generic choice of extension $P_{\eta}$, the subset $w_{x}=w_{y}$ is transversally cut codimension 1 submanifold $\mathscr{R}(z_{-},z_{+})\subset \mathscr{M}_{0}(z_{-},z_{+})\times \mathscr{P}$.}

The argument is straightforward application of the Sard-Smale theorem; this uses that the flow lines $\pi\in \mathscr{P}$ pass through the region where we are allowed to vary $P_{\eta}$. Then $\mathscr{M}_{0}(y(z_{-}),x(z_{+}))$ is identified with $\mathscr{R}(z_{-},z_{+})$ via the projection map.

The rest of the argument is similar to \S\ref{sec:morse-smale-property}. Let us recall that the linearized operator of \S\ref{sec:linearized-operators} is defined on:
\begin{equation*}
  D_{w}:(\bigoplus_{\nu=1}^{k}W^{1,p,\delta}(u_{\nu}^{*}TSY,u_{\nu}^{*}TL))\oplus \mathscr{I}_{w}\to \bigoplus_{\nu=1}^{k}L^{p,\delta}(u^{*}_{\nu}TSY),
\end{equation*}
where $\mathscr{I}_{w}$ is a finite dimensional vector space of coincident variations. To make a long story short, the domain and codomain of $D_{w}$ split into a part tangent to $\R Z\oplus \R R \simeq \C$ and a part tangent to the distribution $\xi$. The operator is upper triangular with respect to this splitting. Following the same strategy as in \S\ref{sec:morse-smale-property}, regularity boils down to:

\textbf{Claim.} {\itshape The kernel of the restriction of $D_{w}$ to the vertical part is zero.}

This is sufficient because then $\dim \ker D_{w}$ is bounded from above by: $$\dim \mathscr{R}(z_{-},z_{+})=\mathrm{index}(D_{w}),$$ using the same argument of differentiating the projection to $\mathscr{R}(z_{-},z_{+})$. To see that the kernel of the vertical restriction is zero, we appeal to the unique lifting \S\ref{sec:proof-of-unique-lifting} of disks. By analysis of the asymptotic convergence to $x(z_{+})$ in future time, it follows that any variation which is purely vertical must be eventually zero (i.e., the vertical tangent vector at the last pearl $u_{k}(+\infty)$ must be zero). Then, by unique lifting, it follows that the tangent vector at $u_{k}(-\infty)$ is also zero, and by \emph{coincident variations} (see \S\ref{sec:linearized-operators}) the variation at $u_{k-1}(+\infty)$ is also zero, etc. In this fashion, one concludes that the entire variation is zero, as desired. This completes the proof. \hfill$\square$

\subsection{Computation of the differential and Theorem \ref{theorem:differential}}
\label{sec:comp-diff}

Abbreviate:
\begin{itemize}
\item $x=x(z_{-})$, $y=y(z_{-})$, $z=z_{-}$,
\item $x'=x(z_{+})$, $y'=y(z_{+})$, $z'=z_{+}$.
\end{itemize}
It is sufficient to compute the outputs $d_{eq}(x)$ and $d_{eq}(y)$, which give four coefficients:
\begin{equation*}
  \left[
    \begin{matrix}
      A&C\\
      B&D
    \end{matrix}\right]=\left[
    \begin{matrix}
      \ip{d_{eq}(x),x'}&\ip{d_{eq}(y),x'}\\
      \ip{d_{eq}(x),y'}&\ip{d_{eq}(y),y'}
    \end{matrix}\right];
\end{equation*}
each coefficient is valued in $\mathbf{k}[q^{-1},q]]\otimes \mathbf{k}[e]$. The rest of the proof is dedicated to computing these four coefficients.

First of all, write $A=A_{0}+eA_{1}+e^{2}A_{2}+\cdots$. The count $A_{0}$ equals the non-equivariant quantum differential, which by the arguments of \S\ref{sec:lift-pearl-traj} (see also \cite[Section 7]{biran-khanevsky-CMH-2013}), coincides with the count $\ip{d(z),z'}\in \mathbf{k}[q^{-1},q]]$ where $d$ is the ordinary quantum differential on $\mathrm{CM}(Q)\otimes \mathbf{k}[q^{-1},q]]$.

Consider now the term $A_{j}$ with $j\ge 1$, and let $w$ be a solution of \S\ref{sec:defin-diff} joining $x$ to $x'$. By the dimension formula \eqref{eq:virtual-dimension-differential} one gets (since $\mathrm{vdim}(w)=0$):
$$\mu(w)+\mathrm{ind}_{P}(x')-\mathrm{ind}_{P}(x)+j-1=0.$$
Recalling that $\mathrm{ind}_{P}(x')=\mathrm{ind}_{Q}(z')$ and $\mathrm{ind}_{P}(x)=\mathrm{ind}_{Q}(z)$ it follows that:
$$\mu(w)+\mathrm{ind}_{Q}(z')-\mathrm{ind}_Q(z)+j-1=0.$$
Since the projection of $w$ (appealing to Lemma \ref{lemma:neck-stretching}) is a regular solution to \S\ref{sec:defin-diff}, one sees that $\mathrm{vdim}(u)=-j$, and so the only solutions are the constant solutions with $\mathrm{vdim}(u)=-1$ when $j=1$. In this case, $x'=x$.

We have therefore shown $A=d+eA_{1}$ and $A_{1}=\ip{d_{1}^{+}(x),x}+\ip{d_{1}^{-}(x),x}.$ To complete the proof we show the two terms in $A_{1}$ cancel each other. Indeed, these terms correspond to counting Morse trajectories over the gradient lines $\pi(s)$ joining $v_{0,+}$ to $v_{1,\pm}$. Each count must in fact be $1$ because it is a coefficient in a continuation map $\mathrm{CM}(P_{+}|S^{1})\to \mathrm{CM}(P_{\pm}|S^{1})$ where $P_{+}=P$ and $P_{-}=a^{*}P$; this coefficient must be nonzero because the restriction of the pseudogradient to the circle has only two critical points $x,y$. Thus $A_{1}=1+1=0$.

Next we consider the case $B=B_{0}+eB_{1}+e^{2}B_{2}+\dots$. The count $B_{0}$ is zero by comparison with Morse theory of a circle; indeed, by index considerations the moduli space is empty unless the projection is constant and hence $z=z'$. In this case the count is still zero, as it corresponds to a coefficient in the Morse differential of our pseudogradient restricted to a circle, (the differential vanishes because there are only two generators $x,y$ and it computes the homology of a circle).

Let us show $B_{j}= 0$ for $j>0$. Suppose there is a solution $w$ from $x$ to $y'$ solving \S\ref{sec:defin-diff} with $\mathrm{vdim}(w)=0$. By using the index formula we have:
$$\mu(w)+\mathrm{ind}_{P}(y)-\mathrm{ind}_{P}(x)+j=1$$
which comparing to indices on the base would imply:
$$\mu(w)+\mathrm{ind}_{Q}(z')-\mathrm{ind}_{Q}(z)=-j,$$
which does not happen by regularity of $(Q,J)$. Thus $B_{j}=0$ for $j>0$.

Next we consider the case of $C=C_{0}+eC_{1}+e^{2}C_{2}+\dots$. The count of $C_{0}$ defines an endomorphism $F$; moreover this endomorphism $F$ is easily seen to coincide with the definition of the connecting homomorphism from \cite[Theorem 4.1.1]{biran-khanevsky-CMH-2013} (in their notation, it would correspond to picking a lift $y$ of $z$ under their ``$p$'', applying the differential and then pulling back $x'$ to $z'$ via their map ``$i$'' to the base).

The vanishing of the $C_{1}$ term is somewhat subtle. We write it as:
\begin{equation*}
  C_{1}=C_{1,+}+C_{1,-}=\ip{d_{1,+}(y),x'}+\ip{d_{1,-}(y),x'}.
\end{equation*}
By index considerations, any solution $w$ with $\mathrm{vdim}(w)=0$ contributing to this term must live above (i.e., project to) an pearly trajectory $\bar{w}$ from $z$ to $z'$ which contributes to the quantum differential (so $\bar{w}$ lies in the one-dimensional moduli space of its translations). We will deform the equation $w$ solves and show there is a compact cobordism between the solutions defining $C_{1,+}$ and $C_{1,-}$, and therefore $C_{1}=0$. The arguments are somewhat similar to the arguments used in consideration of the BV-operator in Floer cohomology; see \cite{abouzaid-EMS-2015}.

Let $[\pi_{\pm}]$ be the two flow lines (modulo translation) joining $v_{0,+}$ to $v_{1,\pm}$, and consider:
\begin{equation*}
  P_{\pm,s}=P_{\pi_{\pm}(s)}.
\end{equation*}
Note that $P_{\pm,s}=P_{+}$ for $s$ sufficiently negative, while for $s$ sufficiently positive it holds that $P_{+,s}=P_{+}$ and $P_{-,s}=P_{-}$. By the formulas $P_{+}=P_{\epsilon,\theta}$, and $P_{-}=P_{\epsilon,\theta+0.5}$, we can therefore define an interpolation $P_{\lambda}=P_{\epsilon,\theta+0.5\lambda}$ and define a family over $\lambda\in [0,1]$:
\begin{equation*}
  P_{\lambda,s}=\left\{
    \begin{aligned}
      &P_{+}\text{ for $s$ sufficiently negative},\\
      &P_{\lambda}\text{ for $s$ sufficiently positive},\\
      &P_{+,s}\text{ for $\lambda=0$ },\\
      &P_{-,s}\text{ for $\lambda=1$};
    \end{aligned}
  \right.
\end{equation*}
we suppose $P_{\lambda,s}$ is related to $Q$. Note that the changing positive asymptotic $P_{\lambda}$ remains in the conjugacy class of $P$ under the circle action; in particular, if one treats the asymptotic ends separately, the analysis of the equation is equivalent to one where the asymptotics are fixed.

Let us denote a representative underlying solution as $\bar{w}=(b;\bar{u}_{1},\dots,\bar{u}_{k})$ and write $b=(b_{1},\dots,b_{k})\in \mathscr{G}_{k}$. This leads to a parametric moduli space $\mathscr{M}(\bar{w})$ of solutions over the parameter $\lambda\in [0,1]$ and $\sigma\in \R$, given by data $(\lambda,\sigma;u_{1},\dots,u_{k})$ where:
\begin{itemize}
\item each $u_{i}$ is in the symplectization end, is $J_{S}$-holomorphic, and projects to $\bar{u}_{i}$,
\item the flow line of $P_{\lambda,s}$ on the interval $[\sigma+b_{i},\sigma+b_{i+1}]$ takes $u_{i}(+\infty)$ to $u_{i+1}(-\infty)$,
\item the flow lines on the end intervals similarly take $u_{1}(-\infty)$ and $u_{k}(+\infty)$ to $y$ and $x'$.
\end{itemize}
Once we show that the solutions in the one-dimensional component of this moduli space form a compact cobordism between the counts for $\lambda=0$ and $\lambda=1$, we will be done, as the counts for $\lambda=0,1$ are $C_{1,\pm}$ by construction.

Let us observe that each $u_{i}$ appearing in a solution $w\in \mathscr{M}(\bar{w})$ must have a bounded first derivative; indeed, $u_{i}$ must remain the circle family of $J_{S}$-holomorphic lifts of $\bar{u}_{i}$ by Lemma \ref{lemma:unique-lifting}. Therefore the only possible failure of compactness is if $\sigma\to\pm\infty$. However, if $\sigma\to\pm\infty$, one would conclude a solution to the quantum differential equation with fixed pseudogradient $P_{\pm}$ with negative virtual dimension, which cannot happen for a generic choice. Thus $\mathscr{M}_{1}$ forms a compact cobordism and $C_{1}=0$.

In the case $C_{j}$ with $j\geq 3$, an index analysis rules out the existence of solutions $w$. Furthermore, this analysis also implies that the $e^2$ term consists only of equivariant Morse trajectories contained in the $S^{1}$ fibre over $z=z'$. Let us package the count of trajectories which remain in the $S^{1}$ fibre over $z$, with zero pearls, as defining a differential $d_{loc}$ on the complex:
$$\mathrm{CM}_{eq}(S^{1})\simeq \mathbf{k}[e]x\oplus \mathbf{k}[e]y.$$
Then, the count of trajectories $C_{2}$ represents the $\ip{d_{loc}(y),x}.$

The differential $d_{loc}$ differential is just the Morse version of the equivariant cohomology differential, and standard arguments show that its homology $\mathrm{HM}_{eq}(S^{1})$ is given by:
\begin{lemma}\label{lemma:equivariant-circle}
  There is an isomorphism:
  $$\mathrm{HM}_{eq}(S^1)\cong \mathbf{k}[e]\slash (e^2)$$
  of $\mathbf{k}[e]$-modules.
\end{lemma}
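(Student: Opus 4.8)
The plan is to reduce the statement to the value of a single coefficient of $d_{loc}$ and then pin that value down using the freeness of the involution on the circle fibre. First I would fix the shape of $d_{loc}$ by gradings: the complex $\mathrm{CM}_{eq}(S^1)=\mathbf{k}[e]x\oplus\mathbf{k}[e]y$ is graded with $\mathrm{ind}(x)=0$, $\mathrm{ind}(y)=1$, $|e|=1$, and $d_{loc}$ has degree $+1$, so for degree reasons $d_{loc}(x)=\alpha\,ex+\beta\,y$ and $d_{loc}(y)=\gamma\,e^2x+\delta\,ey$ with $\alpha,\beta,\gamma,\delta\in\mathbf{k}$ and no higher $e$-terms. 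The coefficient $\beta$ is the $e^0$-term, i.e.\ the ordinary Morse differential of $P|_{S^1}$, which vanishes because a pseudogradient on $S^1$ with two critical points computes $H^*(S^1)$. The coefficients $\alpha$ and $\delta$ are the $e^1$-terms of $\ip{d_{eq}(x),x}$ and $\ip{d_{eq}(y),y}$; these are exactly the quantities called $A_1$ and $D_1$ in \S\ref{sec:comp-diff}, and the argument given there applies verbatim: each is a sum $\ip{d_{0,1,0}^+(\cdot),\cdot}+\ip{d_{0,1,0}^-(\cdot),\cdot}$, and each summand is an entry of a degree-preserving continuation isomorphism $\mathrm{CM}(P|_{S^1})\to\mathrm{CM}(P_\pm|_{S^1})$ (with $P_+=P$, $P_-=a^*P$), hence is diagonal with diagonal entry $1$ over $\Z/2\Z$; thus $\alpha=\delta=1+1=0$. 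This leaves $d_{loc}(x)=0$ and $d_{loc}(y)=\gamma\,e^2x$, where $\gamma=C_2=\ip{d_{loc}(y),x}$, and then $d_{loc}^2=0$ automatically.

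It remains to prove $\gamma=1$; once this is known one reads off $\mathrm{HM}_{eq}(S^1)=\mathbf{k}[e]x/e^2\mathbf{k}[e]x\cong\mathbf{k}[e]/(e^2)$ as $\mathbf{k}[e]$-modules, the generator in degree $0$. For this I would observe that $d_{loc}$ is, by its very definition (zero pearls, trajectories of $P_{\pi(s)}$ for $\pi$ running over the flow spaces $\mathscr{P}^{j,\pm}\subset S^\infty$, the $-$-case asymptotics twisted by $a$), precisely the \emph{uncompleted} Borel $\Z/2\Z$-equivariant Morse differential of the circle fibre over $z$ with the involution $a$, in the sense of \cite{seidel-smith-GAFA-2010,seidel-GAFA-2015}. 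Since $a$ acts freely on this circle (a half-period rotation along a Reeb orbit), the argument of Theorem \ref{theorem:free-action} applies: for a free action the Borel data may be chosen $\eta$-independent, whereupon the equivariant complex collapses onto the ordinary Morse complex of the quotient $S^1/\langle a\rangle\cong\mathrm{RP}^1$, so $\mathrm{HM}_{eq}(S^1)$ is $2$-dimensional over $\mathbf{k}$. If $\gamma$ were $0$, the homology of $(\mathbf{k}[e]x\oplus\mathbf{k}[e]y,d_{loc})$ would be the infinite-dimensional $\mathbf{k}[e]x\oplus\mathbf{k}[e]y$, a contradiction; hence $\gamma=1$. The $\mathbf{k}[e]$-module structure is then the one induced by the classifying map $\mathrm{RP}^1\hookrightarrow\mathrm{RP}^\infty$ of the connected double cover $S^1\to\mathrm{RP}^1$, i.e.\ the restriction $H^*(\mathrm{RP}^\infty)\to H^*(\mathrm{RP}^1)$, which is precisely $\mathbf{k}[e]\to\mathbf{k}[e]/(e^2)$.

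I expect the last step to be the main obstacle: one must take seriously that $\mathrm{HM}_{eq}$ is a module over $\mathbf{k}[e]$ and not over $\mathbf{k}[[e]]$, and check that this uncompleted theory nonetheless reproduces the honest finite-dimensional Borel cohomology of the circle fibre — and this is exactly where the freeness of $a$ is used, since for a free action there is no completion issue. As a self-contained alternative avoiding the topological input, one could instead compute $\gamma$ directly: it counts the rigid pairs $(\pi,\bar\gamma)$ with $\pi\in\mathscr{P}^{2,\pm}/\R$ and $\bar\gamma$ a trajectory of $P_{\pi(s)}$ from $y$ to the $a$-twisted point $x$, and an explicit analysis of the two-parameter family $P_{\pi(s)}$ on $S^1$ in the style of \cite[\S3]{seidel-GAFA-2015} yields $\gamma=1$. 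I would present the homotopy-theoretic route as primary and use this calculation as a cross-check.
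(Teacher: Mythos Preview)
Your argument is correct and takes a genuinely different route from the paper's. The paper invokes invariance to switch to an \emph{explicit} $\Z/2\Z$-equivariant pseudogradient $V_t=-\cos(2\pi t)\sin(2\pi t)\partial_t$ on $S^1$ with four critical points $x_0,x_1,y_0,y_1$; because $V$ is equivariant one may take the Borel data constant in $\eta$, so only $e^0$ and $e^1$ terms survive, and the paper computes $d_{eq}$ by hand in this four-generator model and then writes down the isomorphism $1\mapsto[x_0+x_1]$ explicitly. You instead stay in the two-generator model coming from $P|_{S^1}$ (which is \emph{not} equivariant), use gradings and the $A_1$ computation from \S\ref{sec:comp-diff} to reduce $d_{loc}$ to the single unknown $\gamma=C_2$, and then determine $\gamma$ by invoking the free-action argument of Theorem \ref{theorem:free-action} together with invariance: since $\mathrm{HM}_{eq}(S^1)\cong H^*(S^1/a)$ is two-dimensional, $\gamma$ cannot vanish.

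Both proofs rely on invariance at the crucial step. Your approach has the virtue of dovetailing with the surrounding text --- the reductions you perform are exactly the computations of $A_1$, $B_0$, $D_1$ already made in \S\ref{sec:comp-diff}, and the statement $\gamma=1$ is precisely what the paper deduces \emph{from} the lemma --- so in effect you reverse the logical flow and derive the lemma from the free-action principle. The paper's approach buys complete self-containment: it is a direct chain-level computation with no appeal to \S\ref{sec:proof-theorem-free-action}. One small point to tighten: Theorem \ref{theorem:free-action} as stated concerns Lagrangian quantum cohomology, so you are really using that its proof (the algebraic manipulation with $d_{eq}=\bar d+e(1+a)$) goes through verbatim in the pearl-free Morse setting; this is immediate but worth saying. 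Your proposed cross-check via a direct count in $\mathscr{P}^{2,\pm}$ is essentially the paper's route in disguise, carried out in the non-equivariant two-generator model rather than the equivariant four-generator one.
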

It follows by a dimension count that $\ip{d_{loc}(y),x}=e^{2}$ since all higher $e$ terms vanish (and the $e$ terms $\ip{d_{loc}(x),x}=\ip{d_{loc}(y),y}=0$ vanish).

Finally we claim $D=d$. This is analogous to the case of $A$ with no real modifications needed.\hfill$\square$

\begin{proof}[Proof of Lemma \ref{lemma:equivariant-circle}]
  As in \S\ref{sec:cont-isom}, one can show that $H^{\ast}_{eq}(S^1)$ is independent of the perturbation data. Consider the following vector field on $S^{1}$:
  $$V_{t}=-\cos(2\pi t)\sin(2\pi t)\partial_{t}.$$
  Note that it is $\Z/2\Z$-equivariant with respect to the involution. It has two index $0$ critical points $x_0=0, x_1=0.5$ and two index $1$ critical points $y_0=0.25$ and $y_1=0.75$. The usual Morse differential can be computed and is equal to:
  $$\begin{cases}
    d_{0}(x_i)=y_0+y_1,\\
    d_{0}(y_i)=0.
  \end{cases}.$$
  Now we consider equivariant contributions. However, the vector field is equivariant and all relevant moduli spaces are transversely cut-out: hence the Borel data $P_{\eta}\equiv V$ for all $\eta\in S^{\infty}$ is a regular perturbation datum.

  We compute the $e$ term: this corresponds to constant solutions, but also their twist by the involution. Hence:
  $$\begin{cases}
    d_{1}(x_j)=d_{1}^{+}(x_j)+d_{1}^{-}(x_j)=x_0+x_1,\\
    d_{1}(y_j)= y_0+y_1.
  \end{cases}$$
  Higher equivariant terms vanish for index reasons. Now construct the $k[e]$-linear map defined by:
  \begin{eqnarray*}
    \phi: k[e]\slash (e^2)& \mapsto & H^{\ast}_{eq}(S^1)\\
    1&\mapsto & [x_0+x_1]
  \end{eqnarray*}
  Note that $x_0+x_1$ is indeed a cycle, so the assignment makes sense. To see that it is well defined, we compute
  $$\phi(e^2)=[e^2(x_0+x_1)]=[e\cdot e(x_0+x_1)]=[e\cdot d_{eq}(x_0)]=[0].$$
  Any class in $H^{\ast}_{eq}(S^1)$ is cohomologous to either $[x_0+x_1]$ or $[e(x_0+x_1)]$. Indeed, in degree $0$ any cycle is of the form $\alpha x_0+\beta x_1$ and by inspection this will be closed if and only if $\alpha=\beta=1$. Moreover, any degree $1$ cycle can be written $\alpha y_0+\beta y_1+e(\gamma x_0+\chi x_1)$, and for this expression to be a cycle one needs either $\alpha=\beta=\gamma=\chi=1$ or one of the pairs $(\alpha,\beta)$, $(\gamma,\chi)$ is identically zero. But then note that $y_0+y_1=e(x_0+x_1)+\delta(x_0)$, which shows that the above class is either zero or cohomologous to $[e(x_0+x_1)]$. This shows that the map is surjective.

  To see that $\phi$ is injective note that by degree reasons $x_0+x_1$ cannot be a coboundary. This concludes the proof.
\end{proof}

\subsection{Proof of Theorem \ref{theorem:main-computation}}
\label{sec:proof-theorem-main}

We show the isomorphism:
$$\mathrm{QH}^{\ast}_{eq}(L)\cong \left(\mathrm{QH}^{\ast}(\bar{L})\otimes \mathbf{k}[e]\right)\slash(e^2+F),$$
of modules over $\mathbf{k}[q^{-1},q]]\otimes \mathbf{k}[e]$, where $F$ is as in \S\ref{sec:comp-diff}. Keeping the splitting of the generators on the equivariant chain complex in mind (as in the proof of Theorem \ref{theorem:differential}), we define a map:
\begin{equation*}
  i:z\in \mathrm{CM}(Q)\mapsto x(z)\in \mathrm{CM}(P),
\end{equation*}
which we extend to be a module homomorphism over $\mathbf{k}[q^{-1},q]]\otimes \mathbf{k}[e]$ via extension of coefficients. By Theorem \ref{theorem:differential}, $i$ is a chain map provided one uses the non-equivariant quantum differential on the domain (with coefficients extended by tensoring $\mathbf{k}[e]$), and the equivariant quantum differential on the codomain. There is then an induced map on homology:
\begin{equation}
  i:\mathrm{QH}^{\ast}(\bar{L})\otimes \mathbf{k}[e]\to \mathrm{QH}^{\ast}_{eq}(L).
\end{equation}
Let us compute the kernel. If a cycle $\zeta$ is sent to an exact cycle, then there are $\alpha,\beta\in\mathrm{CM}(Q)\otimes\mathbf{k}[q^{-1},q]]\otimes\mathbf{k}[e]$, considered as a tuple $(\alpha,\beta)$, such that:
$$d_{eq}(\alpha,\beta)= \begin{bmatrix}
        d\alpha +(e^2+F)\beta\\
        d\beta
\end{bmatrix}=\left[\begin{matrix}
  \zeta\\
  0
\end{matrix}\right].$$
The second coordinate shows that $\beta$ is a cycle for the non-equivariant differential, and the first implies that $\zeta$ and $(e^2+F)\beta$ are homologous. Hence the homology class $[\zeta]$ lies in the image of $(e^2+F)$; the same argument in reverse shows that the kernel of $i$ is equal to the image of $(e^{2}+F)$. Thus $i$ descends to a well-defined injective map:
$$i:\mathrm{QH}^{\ast}(\bar{L})\otimes \textbf{k}[e]\slash(e^2+F)\to \mathrm{QH}^{\ast}_{eq}(L).$$

To show surjectivity of $i$, we inductively show that any cycle $(\alpha,\beta)$ in the equivariant chain complex is homologous to a cycle of the form $(\zeta,0)$. The induction is on the degree $\deg(\alpha)$ (as a polynomial in the variable $e$) of $\alpha$. First use Theorem \ref{theorem:differential}: to show $d_{eq}(\alpha,\beta)=0$ implies:
\begin{equation*}
  d\alpha+(e^2+F)\beta=0\implies \deg(\beta)+2\le \deg(\alpha).
\end{equation*}
This immediately gives the base cases $\det(\alpha)=0,1$ cases of the induction. In general, for $m=\deg(\alpha)$ with $m\ge 2$, write $\alpha=\alpha_{0}+\dots+\alpha_{m}e^{m}$, and then replace:
\begin{equation*}
  (\alpha',\beta')=(\alpha,\beta)+d_{eq}(0,-\alpha_{m}e^{m-2}),
\end{equation*}
which has $\deg(\alpha')<\deg(\alpha)$, and hence the induction proceeds.\hfill$\square$

\section{The case of free actions and the proof of Theorem \ref{theorem:free-action}}
\label{sec:proof-theorem-free-action}

Let $W$ and $L$ be as in Theorem \ref{theorem:free-action}; here we recall that the $\Z/2\Z$ action on $W$ is free, with smooth quotients $W'=W\slash\Z/2\Z$ and $L'=L\slash \Z/2\Z$.

Consider a regular pair of data $(P',J')$ for defining $\mathrm{QH}^{*}(P',J')$ in the sense of Remark \ref{remark:non-eq-regular}. This lifts to $\Z/2\Z$ equivariant data $(P,J)$ on $W$. It is almost immediate to show that this lifted data is regular in the sense of \S\ref{sec:nodal-curv}: indeed, the linearized problem at a solution for $(P,J)$ is identified with the linearized problem at a solution for $(P',J')$ by simply projecting $W\to W'$.

Above each zero $z$ of $P'$ live two zeros of the same index, $x(z)$ and $y(z)$; unlike the proof of Theorem \ref{theorem:differential}, there is no canonical way to distinguish these (and so we pick an arbitrary labeling). Define the operator $a$ which swaps $x(z)$ with $y(z)$. Then:
\begin{equation*}
  d_{eq}=\bar{d}+e(1+a),
\end{equation*}
where $\bar{d}$ is defined over $\mathrm{CM}_{*}(P)\otimes \mathbf{k}[q^{-1},q]]$ and is the specialization $e=0$ of the equivariant differential. The matrix entries of $\bar{d}$ cannot be determined explicitly because of the arbitrary choice of $x(z),y(z)$; any trajectory $w$ from a zero $z$ to a zero $z'$ has exactly two lifts, one starting at $x(z)$ and one starting at $y(z)$, and they land in $\set{x(z'),y(z')}$, however we cannot control which one. What we do know is that if one lift ends in $x(z')$, the other ends in $y(z')$ and vice-versa. In particular, one has the formula:
$$d_{eq}(x(z)+y(z))=\bar{d}(x(z)+y(z))=x(dz)+y(dz);$$
here $d$ is the non-equivariant differential for $P',L'$. In particular, this shows that $\phi(z):=x(z)+y(z)$ is a chain map. Theorem \ref{theorem:free-action} is then implied by:
\begin{lemma}
  The map $\phi$ is a quasi-isomorphism.
\end{lemma}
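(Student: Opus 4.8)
The plan is to show that the chain map $\phi\colon \mathrm{CM}(P',J')\to \mathrm{CM}_{eq}(L)$, $z\mapsto x(z)+y(z)$, extended $\mathbf{k}[q^{-1},q]]\otimes\mathbf{k}[e]$-linearly, induces an isomorphism on homology. First I would observe that the target complex $(\mathrm{CM}(P)\otimes\mathbf{k}[q^{-1},q]]\otimes\mathbf{k}[e],\,d_{eq})$ with $d_{eq}=\bar d+e(1+a)$ carries an obvious decreasing filtration by powers of $e$, with associated graded the $\mathbf{k}[e]$-linear extension of $(\mathrm{CM}(P)\otimes\mathbf{k}[q^{-1},q]],\bar d)$ (each graded piece being a copy of this complex). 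The map $\phi$ is compatible with this filtration in a trivial way (it lands in the $e^{0}$-piece) but that is not quite the right comparison; instead the cleaner approach is a direct algebraic argument using the involution $a$ on $\mathrm{CM}(P)$.

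The key algebraic point is this: let $C=\mathrm{CM}(P)\otimes\mathbf{k}[q^{-1},q]]$ with its differential $\bar d$, and note that $a^{2}=\id$, $a$ anticommutes appropriately, and — crucially — $\bar d\, a = a\,\bar d$ is \emph{not} what holds; rather, from the lifting property (``if one lift of $w$ ends at $x(z')$ the other ends at $y(z')$'') one gets that the projection $C\to \mathrm{CM}(P')\otimes\mathbf{k}[q^{-1},q]]$ sending both $x(z),y(z)$ to $z$ intertwines $\bar d$ with $d$, and that $\ker(\text{projection}) = \mathrm{image}(1+a)$, while $\phi$ is a splitting $z\mapsto x(z)+y(z)$ of the projection. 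Over $\mathbf{k}=\Z/2\Z$ one has $(1+a)^{2}=1+a^{2}=0$, so $C$ with the operator $1+a$ is a two-term complex; its homology is $\ker(1+a)/\mathrm{im}(1+a)$, and since $a$ freely permutes the basis $\{x(z),y(z)\}$ in pairs, this homology is exactly the diagonal, identified via $\phi$ with $\mathrm{CM}(P')\otimes\mathbf{k}[q^{-1},q]]$. So on the ``$e$-direction'' the complex $(C\otimes\mathbf{k}[e],\,e(1+a))$ already has homology $\mathrm{CM}(P')\otimes\mathbf{k}[q^{-1},q]]\otimes\mathbf{k}[e]$ concentrated in $e$-degree $0$, realized by $\phi$.

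The main obstacle — and the substance of the proof — is combining the $\bar d$ differential with the $e(1+a)$ term. I would run the spectral sequence of the $e$-filtration: the $E_{0}$-differential is $e(1+a)$, whose homology we just computed to be $\mathrm{im}(\phi)\cong \mathrm{CM}(P')\otimes\mathbf{k}[q^{-1},q]]$ sitting in $e$-degree $0$; the $E_{1}$-differential is induced by $\bar d$, and on the diagonal it is precisely the non-equivariant differential $d$ since $\bar d(x(z)+y(z))=x(dz)+y(dz)=\phi(dz)$. Hence $E_{2}=E_{\infty}=\mathrm{QH}^{*}(\bar L\text{-data})\otimes\mathbf{k}[e]$ collapses — there is no room for higher differentials since everything is concentrated in $e$-degree $0$ on $E_{1}$ — and this is exactly the homology computed by $\phi$. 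One must check convergence of the spectral sequence: the filtration by powers of $e$ is exhaustive and Hausdorff on $\mathbf{k}[e]$ (not $\mathbf{k}[[e]]$), and in each fixed total degree only finitely many $e$-powers are relevant because $d_{eq}$ raises $e$-degree by exactly $1$ in the relevant term and $\mathrm{CM}(P)$ is finite-dimensional, so the spectral sequence is a finite one in each degree and convergence is automatic. Finally I would note $\phi$ is the edge map of this spectral sequence, so the collapse at $E_{2}$ says precisely that $\phi$ is a quasi-isomorphism, which is what Theorem \ref{theorem:free-action} requires. The routine verifications left to the reader are: that $\bar d$ descends to $d$ under the projection (immediate from the two-lifts description), that $(1+a)$ squares to zero mod $2$, and the standard convergence bookkeeping for the filtration.
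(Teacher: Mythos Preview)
Your spectral-sequence strategy is sound but the filtration is misidentified. For the filtration $F^{p}=e^{p}\cdot(\mathrm{CM}(P)\otimes\mathbf{k}[q^{-1},q]]\otimes\mathbf{k}[e])$ the associated-graded differential is the part of $d_{eq}=\bar d+e(1+a)$ that \emph{preserves} $e$-degree, namely $\bar d$; the term $e(1+a)$ only shows up on the next page. With that correction one finds $E_{1}=\mathrm{QH}^{*}(L)\otimes\mathbf{k}[e]$ and $d_{1}=e(1+a)$, and now your key step fails: the involution $a$ permutes the chain-level basis freely but need not act freely on the \emph{homology} $\mathrm{QH}^{*}(L)$, so $\ker(1+a)/\mathrm{im}(1+a)$ need not vanish there and the sequence need not collapse at $E_{2}$. (Incidentally, $\bar d a=a\bar d$ \emph{does} hold --- the two lifts of each trajectory are exchanged by $a$ --- and is in any case forced by $d_{eq}^{2}=0$.) Your convergence remark is also wrong for this filtration: in a fixed total degree infinitely many $e$-powers can occur, since the $q$-variable makes $C^{d}=(\mathrm{CM}(P)\otimes\mathbf{k}[q^{-1},q]])^{d}$ nonzero for infinitely many $d$.

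The easy repair is to use the complementary filtration $G^{m}=\{\text{internal degree}\ge m\}$, where internal degree means total degree minus $e$-degree. Now $e(1+a)$ preserves $G^{m}$ while $\bar d$ strictly raises it, so the $E_{0}$-differential really is $e(1+a)$; your computation of $E_{1}\simeq\mathrm{CM}(P')\otimes\mathbf{k}[q^{-1},q]]$ (concentrated in $e$-degree $0$ via $\phi$, using freeness of $a$ on the chain basis) and of $E_{2}\simeq\mathrm{QH}^{*}(L')$ then goes through, with collapse at $E_{2}$ forced as you say. Convergence is clean here: in total degree $n$ one has $G^{m}=0$ for $m>n$, so the filtration is bounded. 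With this fix your argument is a valid alternative to the paper's proof, which instead shows injectivity and surjectivity of $\phi$ on homology by a bare-hands induction on the $e$-degree of chains --- essentially the same spectral sequence unwound by hand.
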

\begin{proof}
  We prove it is injective and surjective on homology. To prove injectivity, suppose $\phi(z)=d_{eq}(\beta)$ for some $\beta=\beta_0+\dots+e^{n}\beta_n$. We prove $\zeta$ is exact by induction on $n$. If $n=0$, then: $$d_{eq}(\beta)=\bar{d}(\beta_0)+e(1+a)\beta_0=\phi(z).$$ Since $\phi(z)$ has no $e$ term, this implies $(1+a)\beta=0$, and hence $\beta$ lies in the image of $\phi$, and it follows that $\zeta$ is exact. Now assume the result holds for $n$ and let $\beta=\beta_0+\dots+ \beta_{n+1}e^{n+1}$ be such that $d_{eq}\beta=\phi(z)$. It follows, by comparing $e$ terms, that:
  $$\begin{cases}
    (1+a)\beta_{n}+\bar{d}\beta_{n+1}=0,\\
    (1+a)\beta_{n+1}=0.
  \end{cases}$$
  In particular, $\beta_{n+1}$ is symmetric, and hence can be written as: $$\beta_{n+1}=(1+a)\gamma_{n}.$$ Let $\tilde{\beta}=\beta_{0}+\dots+ e^{n}(\beta_{n}+\bar{d}\gamma_{n})=\beta+d_{eq}(e^{n}\gamma_{n})$. Then $d_{eq}(\tilde{\beta})=d_{eq}\beta=\phi(\zeta)$, and by our induction hypothesis it follows that $\zeta$ is exact, as desired.
  
Next we establish surjectivity. Let $\gamma=\gamma_0+\dots+ e^{n}\gamma_n$ be a cycle in the equivariant chain complex. We prove that $\gamma$ is cohomologous to a cycle of the form $\phi(\zeta)$ by induction on $n$. If $n=0$, then $d_{eq}\gamma_0= \bar{d}\gamma_0+e(1+a)\gamma_0=0$ implies that $\gamma_0$ is symmetric and thus is in the image of $\phi$. Assume the induction hypothesis for $n$ and fix a cycle $\gamma=\gamma_0+\dots+\gamma_{n+1}e^{n+1}$. The fact that $\gamma$ is closed implies:
$$\begin{cases}
  \bar{d}\gamma_{n+1}+(1+a)\gamma_n=0,\\
  (1+a)\gamma_{n+1}=0.
\end{cases}$$
Now choose $\delta_{n}$ so that $(1+a)\delta_{n}=\gamma_{n+1}$. Then $\gamma-d_{eq}(e^{n}\delta_{n})$ is a cycle, homologous to $\gamma$, of degree $n$. The result follows.
\end{proof}

\appendix

\section{Transversality for pearl trajectories}
\label{sec:proof-theorem-regular-b-data}

The goal is to prove Theorem \ref{theorem:regular-b-data}, which concerns the generic regularity of Borel data. We will prove generic regularity for all solutions in \S\ref{sec:defin-diff} and solutions of the nodal equation \S\ref{sec:nodal-curv}, by induction on the virtual dimension (the induction terminates at $\mathrm{vdim}(w)=1$). Recall the virtual dimension is given by:
\begin{equation*}
  \mathrm{vdim}(w)=\mu(w)+\mathrm{ind}(x_{+})-\mathrm{ind}(x_{-})+\dim\mathscr{P}-1-\mathfrak{n},
\end{equation*}
where $\mathfrak{n}$ is the number of nodes. If $\mathrm{vdim}(w)$ is sufficiently negative, there are no solutions at all, and so all solutions are regular.

Let us call a trajectory $w=(b,\pi,u_{1},\dots,u_{k})$ \emph{simple} provided the following holomorphic curves are simple:\footnote{in the sense their injective points are dense in the domain, as in \cite{mcduff-salamon-book-2012}}
\begin{itemize}
\item the union of the holomorphic disks $u_{i}$ over those indices $i$ such that $\pi(b_{i})$ is near the starting pole $v_{0,+}$,
\item the union of the holomorphic disks $u_{i}$ over those indices $i$ such that $\pi(b_{i})$ is near the ending pole $v_{j,\pm}$,
\item the holomorphic curves $u_{i}$ such that $\pi(b_{i})$ is far from both the starting and ending pole.
\end{itemize}
Here we recall $\pi:\R\to S^{\infty}$ is the auxiliary flow line. The meaning of a point $\eta=\pi(b_{i})$ being near a pole $v_{j,\pm}$ is that $\pm \eta_{j}\ge 4/5$, as per the definition of Borel data in \S\ref{sec:borel-data}. In the worst case, $\pi$ is a constant flow line from $v_{0}^{+}$ to $v_{0}^{+}$; in this case simplicity implies the union of all the curves is required to be simple.

By adapting standard property that a suitable universal operator is a submersion at simple holomorphic curves (see \cite[Proposition 3.4.2]{mcduff-salamon-book-2012}), one shows that, for generic $J$, all simple pearl trajectories are regular\footnote{Here we use the fact that $J_{\eta}$ is allowed to vary in an $\eta$-dependent way when $\eta$ is far enough from the poles.} (in sense of Definition \ref{definition:regular}).

Morally, the strategy is now to show this automatically implies that all trajectories with $\mathrm{vdim}(w)\le 1$ are simple. However, in the actual argument we will require further perturbation of $J$.

Suppose $w$ is a non-simple trajectory: the idea, following \cite[\S3.2]{biran-cornea-arXiv-2007}, is to extract from $w$ an underlying simple trajectory. The key tool used in this part of the argument is the work of Lazzarini \cite{lazzarini-1,lazzarini-2}, which analyzes the structure of general $J$-holomorphic curves with boundary on a Lagrangian submanifold; see Figure \ref{fig:shattered-pearl}.

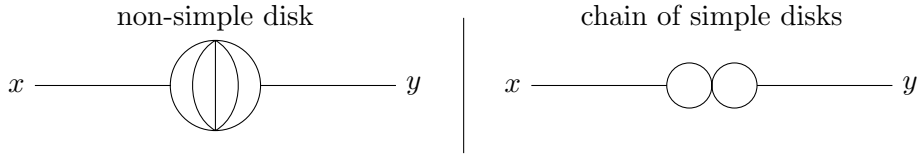
\begin{figure}[h]
  \centering
  \begin{tikzpicture}[scale=.6]
    \draw (0,0) circle (1) (0,-1)to[out=30,in=-30](0,1) (0,-1)to[out=150,in=-150](0,1) (0,-1)--(0,1) (1,0)--+(3,0)node[right]{$y$} (-1,0)--+(-3,0)node[left]{$x$} (0,1)node[above]{non-simple disk};
    \draw (5.5,1.5)--+(0,-3);
    \begin{scope}[shift={(11,0)}]
      \draw (-0.5,0) circle (.5) (0.5,0) circle (.5) (1,0)--+(3,0)node[right]{$y$} (-1,0)--+(-3,0)node[left]{$x$} (0,1)node[above]{chain of simple disks};      
    \end{scope}
  \end{tikzpicture}
  \caption{A non-simple pearl ``shatters'' into open holomorphic pieces separated by a certain graph $G$ by the results of \cite{lazzarini-1,lazzarini-2}; moreover, the interior of each holomorphic piece is a multiple cover of the interior of an underlying simple holomorphic disk. Therefore one can extract an underlying chain of simple disks as explained in \cite[\S3.2]{biran-cornea-arXiv-2007}.}
  \label{fig:shattered-pearl}
\end{figure}

To make a long story short:
\begin{lemma}
  For generic $J$ (the complex structure at the poles), a non-simple trajectory $w$ from $x$ to $y$ with $\mathrm{vdim}(w)=d$ implies the existence of a simple trajectory $w'$ from $x$ to $y$ with: $$\mathrm{vdim}(w')\le d-2.$$ In particular, if all simple trajectories $w'$ are regular, than all trajectories with $\mathrm{vdim}(w)\le 1$ are regular.
\end{lemma}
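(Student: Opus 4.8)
The plan is to combine two ingredients: the genericity of regular simple trajectories (an adaptation of the standard Sard--Smale argument) and a ``shattering'' result that extracts a simple trajectory of strictly lower virtual dimension from any non-simple one. First I would make precise the Fréchet manifold of admissible Borel data and the universal moduli space of simple trajectories, and verify that the universal linearized operator is surjective at any simple configuration; here the essential point is that when $\pi(b_i)$ is far from all poles the almost complex structure $J_\eta$ is unconstrained, so the usual argument (perturbing $J$ near an injective point, as in \cite[Proposition 3.4.2]{mcduff-salamon-book-2012}) applies verbatim, while near the poles one uses that the disks associated to a fixed pole form a single simple curve, and injective points of that curve can be used. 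Sard--Smale then yields a comeagre set of Borel data for which every simple trajectory is regular.

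Second, I would carry out the ``shattering'' step: given a non-simple trajectory $w$ with $\mathrm{vdim}(w)=d$, apply Lazzarini's decomposition \cite{lazzarini-1,lazzarini-2} to each non-simple pearl to obtain, following \cite[\S3.2]{biran-cornea-arXiv-2007}, an underlying chain of simple disks whose total Maslov number $\mu(w')$ satisfies $\mu(w') \le \mu(w)$, with the inequality strict (by at least $N\ge 2$, hence by at least $2$) whenever $w$ is genuinely non-simple. Reassembling these simple pieces with the same Morse data $\pi$ and the same asymptotics $x,y$ gives a simple trajectory $w'$; since the Morse-theoretic and $\mathscr{P}$-contributions to the virtual dimension are unchanged and only $\mu$ drops, one gets $\mathrm{vdim}(w') \le d-2$. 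Some care is needed to track the bookkeeping: a non-simple pearl may shatter into several simple disks (increasing $k$), but each additional disk interface can be absorbed either as a node or by inserting a trivial Morse segment without changing $\mathrm{vdim}$ beyond the drop already accounted for by the Maslov number; I would state this as the content of the Lemma and refer to \cite{biran-cornea-arXiv-2007} for the detailed combinatorics.

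Finally I would conclude the induction: suppose $J$ is chosen generically so that all simple trajectories are regular and, inductively, all trajectories with $\mathrm{vdim}(w) \le d-1$ are regular (the base case $d \ll 0$ being vacuous since no solutions exist). Let $w$ have $\mathrm{vdim}(w)=d \le 1$. If $w$ is simple it is regular by the first step. If $w$ is non-simple, the shattering step produces a simple $w'$ with $\mathrm{vdim}(w') \le d-2 \le -1 < 0$; but a simple trajectory with negative virtual dimension cannot exist for generic $J$ (its moduli space would be a negative-dimensional manifold), a contradiction. Hence every $w$ with $\mathrm{vdim}(w)\le 1$ is simple, and therefore regular. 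This also yields Theorem \ref{theorem:regular-b-data}; the ``very regular'' version is the same argument with the threshold $1$ in place of $0$, and the free-action statement follows because in that case the projection $W\to W'$ identifies linearized operators, so $\eta$-independent data already achieves transversality.

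The main obstacle I anticipate is the bookkeeping in the shattering step: ensuring that the extracted simple trajectory $w'$ genuinely has $\mathrm{vdim}(w') \le d-2$ and not merely $\le d-1$. This requires that the Maslov number actually decreases (not just fails to increase), which uses minimality of the Maslov number together with the fact that a non-simple disk is a nontrivial multiple cover on each Lazzarini piece, so the deleted Maslov contribution is a positive multiple of $N\ge 2$. Making this rigorous in the presence of multiple pearls, a nonconstant flow line $\pi$, and the pole-dependent regularity conditions is the delicate part; I would handle it by reducing to the pearl-by-pearl statement of \cite[\S3.2]{biran-cornea-arXiv-2007} and checking that the Borel parameter $\pi$ plays a purely passive role.
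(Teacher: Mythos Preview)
Your overall approach matches the paper's: establish genericity of regularity for simple trajectories via Sard--Smale, then use Lazzarini's decomposition to extract a simple trajectory of lower virtual dimension from any non-simple one, and conclude by contradiction. The induction structure and the endgame (non-simple $w$ with $\mathrm{vdim}\le 1$ would force a simple $w'$ with $\mathrm{vdim}\le -1$, impossible) are exactly what the paper does.

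There is one genuine mismatch in your shattering step. You describe applying Lazzarini's decomposition \emph{to each non-simple pearl}, and your justification for the drop $\mu(w')\le \mu(w)-N$ invokes ``a non-simple disk is a nontrivial multiple cover on each Lazzarini piece.'' But recall the paper's definition of simple: for pearls $u_i$ with $\pi(b_i)$ near a pole (where $J_\eta$ is pinned to the fixed value $J$), it is the \emph{union} of those disks that must be simple, not each disk individually. So a trajectory can be non-simple even when every individual pearl is simple --- two distinct simple pearls near the same pole may share image pieces. Your pearl-by-pearl reduction does not handle this case, and your claimed Maslov drop argument does not apply (nothing is multiply covered). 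The paper's proof treats exactly this case as its illustrative example: one takes the maximal initial segment $u_1,\dots,u_i$ near the starting pole and uses the simple pieces in Lazzarini's decomposition of the \emph{union} to build a ``short-cut'' partial trajectory from $u_1(-\infty)$ to $u_i(+\infty)$ with strictly smaller total Maslov number, then splices this onto the remainder of $w$. The drop by at least $2$ comes from eliminating a redundant simple piece (of Maslov $\ge N\ge 2$), not from passing to an underlying simple cover. Your final paragraph flags bookkeeping as the obstacle but proposes reducing to the pearl-by-pearl statement; that reduction is precisely what fails here.
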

\begin{proof}
  We refer the reader to the proof of \cite[\S3]{biran-cornea-arXiv-2007} for the details. Let us briefly indicate the argument in one case: if the union of disks $u_{i}$ with $\pi(b_{i})$ close to the starting pole $v_{0,+}$ is not a simple holomorphic curve, then the disks ``overlap'' each other (e.g., one disk can overlap itself and become non-simple, see Figure \ref{fig:shattered-pearl}). This union of disks is $u_{1},\dots,u_{i}$ for some maximal $i$, and one considers the partial trajectory from $u_{1}(-\infty)$ to $u_{i}(+\infty)$. Using the simple pieces in Lazzarini's decomposition \cite{lazzarini-1,lazzarini-2}, one can extract a ``short-cut'' trajectory from $u_{1}(-\infty)$ to $u_{i}(+\infty)$. In the construction of this short-cut, the Maslov index must drop by at least $2$. By attaching this to the rest of the old trajectory $w$, one obtains the desired trajectory $w'$. 
\end{proof}

This completes the proof of Theorem \ref{theorem:regular-b-data}.\hfill$\square$

\subsection{Very regular continuation/deformation data exists}
\label{sec:very-regular-continuation-deformation-data-exists}

The argument is analogous to the arguments used in the proof of Theorem \ref{theorem:regular-b-data}.

\section{Elementary argument proving Theorem \ref{theorem:application} when $n=1$}
\label{sec:elem-argum-prov}

In this section we provide an elementary argument proving that, if $L\subset \C$ is the unit circle, then $L$ cannot be displaced from itself, or from $\R\subset \C$, by an equivariant Hamiltonian isotopy. In fact, we can prove:
\begin{proposition}\label{proposition:low-dimensional-1}
  Suppose that $\varphi_{\eta}$ is a smooth family of area preserving diffeomorphisms parametrized by $\eta\in S^{2}$ so that:
  \begin{equation}\label{eq:family-eq}
    \varphi_{-\eta}=a\varphi_{\eta}a.
  \end{equation}
  Then $\varphi_{\eta}(L)\cap L\ne\emptyset$ holds for at least two points $\eta\in S^{2}$.
\end{proposition}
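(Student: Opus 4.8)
The plan is to reduce Proposition \ref{proposition:low-dimensional-1} to a Borsuk--Ulam-type statement about the map which records where $\varphi_{\eta}(L)$ sits relative to $L$. First I would fix an explicit parametrization of the circle $L \subset \C$ and observe that since $\varphi_{\eta}$ is area preserving, $\varphi_{\eta}(L)$ is an embedded circle enclosing the same area as $L$; hence $\varphi_{\eta}(L)$ and $L$ either intersect or $\varphi_{\eta}(L)$ is contained in one of the two components of $\C \setminus L$ --- but having equal enclosed area, it cannot lie in the bounded component (it would enclose strictly less area) nor, suitably interpreted, in the unbounded one. Wait --- more carefully: if $\varphi_\eta(L)\cap L=\emptyset$ then $\varphi_\eta(L)$ lies entirely inside the open disk or entirely outside the closed disk; the area constraint immediately rules out the inside case, so $\varphi_\eta(L)$ lies entirely in $\{|z|>1\}$. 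This is the key rigidity input.

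Next I would build a test function $g: S^2 \to \R$ (or to $\R^2$) that is odd with respect to the antipodal map on $S^2$, using \eqref{eq:family-eq}. The natural candidate is something like $g(\eta) = \int_{S^1}(\,|\varphi_\eta(e^{2\pi i s})|^2 - 1\,)\,\kappa(s)\,ds$ for a cleverly chosen weight, or more robustly the signed quantity measuring on which side $\varphi_\eta(L)$ falls --- e.g. the winding number of $\varphi_\eta(L)$ around $0$ is not useful (always $1$), but $\min_{s}|\varphi_\eta(e^{2\pi i s})| - 1$ detects whether $\varphi_\eta(L)$ avoids $L$. To get antisymmetry, note that $a(z)=-z$ (or $\bar z$?) and $\varphi_{-\eta}=a\varphi_\eta a$, so $\varphi_{-\eta}(L)=a(\varphi_\eta(aL))=a(\varphi_\eta(L))$ since $aL=L$; thus $\varphi_{-\eta}(L)$ is the reflection of $\varphi_\eta(L)$. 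Since $a$ preserves $\{|z|>1\}$ and $\{|z|<1\}$, the ``side'' of $\varphi_\eta(L)$ is actually \emph{even} in $\eta$, not odd --- so a naive sign argument fails, and this is exactly the subtlety: one cannot simply say ``the set of $\eta$ with $\varphi_\eta(L)$ outside $L$ is all of $S^2$ or detected by a degree argument.''

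The hard part will be getting the count ``at least two'' rather than ``at least one'', and doing so despite the evenness just noted. The plan to handle this: suppose for contradiction that $\varphi_\eta(L)\cap L=\emptyset$ for all $\eta$ outside a set of at most one point. By the rigidity above, $\varphi_\eta(L)\subset\{|z|>1\}$ for all such $\eta$. Then consider the map $\eta \mapsto$ (the point of $\varphi_\eta(L)$ closest to $0$), or better, the Gauss-map-type data $\eta\mapsto (\,$barycenter of $\varphi_\eta(L)$, or the lowest point of $\varphi_\eta(L)$ in some direction$\,)$; equivariance \eqref{eq:family-eq} forces this $\C$-valued (hence $\R^2$-valued) function $h:S^2\to\R^2$ to satisfy $h(-\eta) = a\cdot h(\eta)$. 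If $a$ acts on $\R^2$ as $-\id$ (the relevant case since $a(z)=-z$), then $h$ is an odd map $S^2\to\R^2$, and by Borsuk--Ulam $h$ has a zero --- but a zero would force $\varphi_\eta(L)$ to be degenerate or centered at $0$ in a way incompatible with being an embedded circle of the right area disjoint from $L$ --- contradiction. To upgrade ``one point'' to ``two points,'' I would invoke that the zero set of an odd map $S^2\to\R^2$, counted appropriately (or just the fact that removing one point from $S^2$ still leaves an odd-map obstruction after a further reduction, or a connectedness/mod-$2$ degree argument), cannot be a single point. Concretely: if the ``bad'' set $B=\{\eta:\varphi_\eta(L)\cap L\ne\emptyset\}$ were empty or a single point, $S^2\setminus B$ is connected and simply connected (if $|B|\le 1$), and on it one has a well-defined odd $\R^2$-valued map whose nonvanishing contradicts Borsuk--Ulam on $S^2$ (a single puncture does not destroy the obstruction because an odd map $S^2\to\R^2\setminus\{0\}\simeq S^1$ would restrict to an odd map on an equator, impossible). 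Therefore $|B|\ge 2$. I expect the main obstacle to be making the ``side-detecting'' or ``barycenter'' function genuinely continuous and genuinely $a$-antiequivariant in the right sense, and pinning down precisely which linear action $a$ induces on the relevant $\R^2$ of values so that Borsuk--Ulam applies cleanly; the area-rigidity step and the passage from ``one'' to ``two'' via puncturing $S^2$ are comparatively routine once that is set up.
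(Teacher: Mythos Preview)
Your approach has a genuine gap at the Borsuk--Ulam step. You propose an odd map $h:S^{2}\to\R^{2}$ (barycenter of $\varphi_{\eta}(L)$ or of the enclosed region) and claim that a zero of $h$ is ``incompatible with being an embedded circle of the right area disjoint from $L$.'' This is not justified, and in fact it is false: there exist simply connected Jordan domains $\Omega\subset\{|z|>1\}$ of area $\pi$ whose center of mass is the origin (e.g., three blobs placed asymmetrically in the exterior annulus, joined by thin tubes into a tree so that the region stays simply connected, with masses and positions tuned so that the first moment vanishes). What \emph{is} impossible is that such an $\Omega$ be invariant under $z\mapsto -z$ --- a path in $\Omega$ from $z_{0}$ to $-z_{0}$ together with its antipodal image would give a loop of odd winding number about $0$, contradicting simple connectivity --- but $h(\eta)=0$ does not force $\Omega=-\Omega$. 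So Borsuk--Ulam applied to a barycenter-type $h$ does not yield the contradiction you need.

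Two smaller remarks. First, the passage from ``one'' to ``two'' is immediate and needs no puncturing argument: if $\varphi_{\eta}(L)\cap L\neq\emptyset$ then applying $a$ and using \eqref{eq:family-eq} together with $a(L)=L$ gives $\varphi_{-\eta}(L)\cap L\neq\emptyset$, so the bad set is antipodally symmetric and hence has even cardinality. The paper dispatches this in one line. Second, the actual argument in the paper does use your area observation (to show $0\notin\varphi_{\eta}(D^{2})$, hence the winding number of $\varphi_{\eta}(L)$ about $0$ vanishes), but then proceeds differently: the angular one-form $\alpha=(x\,\d y-y\,\d x)/(x^{2}+y^{2})$ pulls back to an exact form $G^{*}\alpha=\d F$ on $S^{2}\times\R/\Z$, and the equivariance $G(-\eta,t+1/2)=-G(\eta,t)$ forces $F(-\eta,t+1/2)=F(\eta,t)+\pi+2\pi k$; iterating gives $0=2\pi+4\pi k$, which is impossible. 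The key invariant is not a barycenter but the angle increment along half the circle, which is rigidly an odd multiple of $\pi$.
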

\begin{proof}[Proof of Proposition \ref{proposition:low-dimensional-1}]
  Of course, it suffices to prove the statement for at least one point $\eta\in S^{2}$, since then $\eta$ and $-\eta$ will be the desired two points.

  Consider $L$ as the image of the standard embedding $\gamma:\R/\Z\to \C$. The family $\varphi_{\eta}$ yields a map $S^{2}\times \R/\Z\to \C$ given by:
  \begin{equation*}
    G(\eta,t)=\varphi_{\eta}(\gamma(t)).
  \end{equation*}
  Notice that this map is equivariant in the following sense:
  \begin{equation*}
    G(-\eta,t+1/2)=a\varphi_{\eta}(\gamma(t))=aG(\eta,t),
  \end{equation*}
  where we use that the standard embedding satisfies $\gamma(t+1/2)=a\gamma(t)$.

  In search of a contradiction, suppose that $G(\eta,t)\not\in L$ for all $\eta,t$.

  Consider the standard angular closed one-form:
  \begin{equation*}
    \alpha=\frac{x\d y-y\d x}{x^{2}+y^{2}}.
  \end{equation*}
  It is well-known the the integrals of $\alpha$ over closed loops are valued in $2\pi \Z$. We claim that the integral of $\alpha$ over each loop $t\mapsto \gamma_{\eta}(t)=G(\eta,t)$ is zero. To show this, we claim the loop $\gamma_{\eta}$ bounds a surface $\Omega_{\eta}$ disjoint from $0$, and so we can apply Stokes' Theorem to the closed one form $\alpha$. Clearly there exists a smooth surface $\Omega_{\eta}$ whose boundary is the image of $\gamma_{\eta}$ --- we can just take $\Omega_{\eta}=\varphi_{\eta}(D^{2})$ --- and this surface $\Omega_{\eta}$ has area $\pi$. If $\Omega_{\eta}$ contains $0$, then $\Omega_{\eta}$ must be entirely contained in the interior of $D^{2}$ as it is not allowed to intersect $L=\bd D^{2}$. But this would imply the area of $\Omega_{\eta}$ is strictly smaller than $\pi$, which is a contradiction. This establishes the claim.

  Thus $G^{*}\alpha$ is an exact one-form, since its integral vanishes over every loop in the domain $S^{2}\times \R/\Z$. Thus $G^{*}\alpha=dF$ for some real-valued function $F$. We next claim that:
  \begin{equation*}
    F(-\eta,t+1/2)=F(\eta,t)+\pi+2\pi k.
  \end{equation*}
  for some $k\in \Z$. Indeed, $F(-\eta,t+1/2)-F(\eta,t)$ is the integral of $\alpha$ over an arc joining a point $z\in \C$ to its antipodal point $-z$ (this arc lies in the image of $G$ and is therefore disjoint from $0$). It is clear that the integral of $\alpha$ over an arc joining a pair of antipodal points is valued in $\pi+2\pi k$ (one simply interprets $\alpha$ as the angular form). The integer $k$ is independent of the point $(\eta,t)$. However, this conclusion is impossible, because then:
  \begin{equation*}
    F(\eta,t)=F(-\eta,t+1/2)+\pi+2\pi k=F(\eta,t)+2\pi+4\pi k\implies 0=2\pi +4\pi k,
  \end{equation*}
  which cannot hold for any integer $k$. This completes the argument.
\end{proof}

\bibliographystyle{./amsalpha-doi}
\bibliography{citations}
\end{document}